\documentclass[11pt,reqno]{amsart}
\usepackage[tmargin=1in,bmargin=1in,rmargin=1in,lmargin=1in]{geometry}
\usepackage{amsmath,amsthm,amsfonts,mathrsfs,latexsym,amssymb,pstricks,pst-grad, graphicx,tabularx, setspace,pstricks, graphics,enumerate,fancyhdr}
\usepackage[breaklinks=true]{hyperref}
\usepackage{xcolor}
\usepackage{tikz}
\DeclareMathOperator{\IN}{i}
\DeclareMathOperator{\UIN}{ui}
\DeclareMathOperator{\di}{d}
\DeclareMathOperator{\diam}{diam}
\DeclareMathOperator{\gr}{gr}
\DeclareMathOperator{\gc}{gcd}
\DeclareMathOperator{\dic}{Dic}
\DeclareMathOperator{\dih}{D}
\DeclareMathOperator{\qdic}{QD}
\newtheorem{theorem}{Theorem}[section]

\newtheorem{corollary}[theorem]{Corollary}
\newtheorem{lemma}[theorem]{Lemma}
\newtheorem{proposition}[theorem]{Proposition}

\newtheorem{definition}[theorem]{Definition}

\newtheorem{utheorem}{\textrm{\textbf{Theorem}}}

\newtheorem{example}[theorem]{Example}

\begin{document}
\title[Uniform Intersection Numbers and Subgroup-Based Generating Graphs of Groups]{Uniform Intersection Numbers and Subgroup-Based Generating Graphs of Groups}

\author[Projesh Nath Choudhury]{Projesh Nath Choudhury}
\author[Biswaranjan Khanra]{Biswaranjan Khanra}
\address{Projesh Nath Choudhury}\address{Department of Mathematics, Indian Institute of Technology, Gandhinagar, Gujarat-382355, India}
\email{projech.nc@iitgn.ac.in}

\address{Biswaranjan Khanra,}
\address{Department of Mathematics, Indian Institute of Technology, Gandhinagar, Gujarat-382355, India}
\email{biswaranjanmath91@gmail.com, kbiswaranjan@iitgn.ac.in}
 	\date{\today}

	\begin{abstract}
		The \emph{uniform intersection number} of a finite group $G$, denoted by $\UIN(G)$, is the smallest positive integer $k$ such that the intersection of any $k$ distinct maximal subgroups of $G$ is equal to its Frattini subgroup. We determine the uniform intersection number of dicyclic, dihedral, and quasi-dihedral groups. We then investigate the uniform intersection number through the subgroup-based generating graph $\mathcal{G}(G)$ of a finite group $G$. We characterize the finite groups $G$ for which both the intersection number and the uniform intersection number are equal to two, and show that this condition is equivalent to $\mathcal{G}(G)$ being complete multipartite. We also classify finite groups in terms of the diameter, girth, and perfectness of $\mathcal{G}(G)$. Finally, we show that $\mathcal{G}(\mathbb{Z}_n)$ is Hamiltonian if and only if $n=p_1^{r}p_2^{r}$, where $r\geq 2$ and $p_1$ and $p_2$ are distinct primes.

	\end{abstract}
	
	\keywords{Uniform intersection number; Subgroup generating graph; Finitely generated group; Complete multipartite;  Hamiltonian; Perfect Graph}.
	\subjclass[2020]{20D60, 20E28, 05C25, 05C17, 05C45, 05E16.}
\maketitle

\section{Introduction and main results} 

\textit{Throughout this paper, for a finitely generated group $G$ and a proper subgroup $S$ of $G$, we define  $\mathcal{M}(G):$ the set of all maximal subgroups of a group $G$,  $M(S)=\{m\in\mathcal{M}(G):S\leq m\}$ the set of all maximal subgroups of $G$ that contains $S$, and $\Phi(G)=\bigcap\limits_{\tiny{m\in\mathcal{M}(G)}}m$ the Frattini subgroup of $G$.}

\textit{All graphs in this paper are assumed to be simple and undirected. For a graph $\mathcal{G}$, $\diam (\mathcal{G})$ and $\gr(\mathcal{G})$ denote the diameter and the girth of $\mathcal{G}$, respectively. For a fixed integer $n \geq 1$, we define
	$[n] := \{ 1, \dots, n \}$.}\bigskip

The intersection number of a finite group $G$, denoted by $\IN(G)$, is the
minimum number of maximal subgroups of $G$ whose intersection is equal to the
Frattini subgroup $\Phi(G)$. The notion of intersection number was introduced
by Archer--Serrano--Cook--Lauderdale--Perez--Villalobo in 2019 \cite{archer}. In
\cite{archer}, the authors determined the intersection number for several
classes of finite groups, including nontrivial nilpotent groups, dihedral
groups, and dicyclic groups. Burness--Garonzi--Lucchini further investigated
the intersection number of finite groups, first in the setting of finite
simple groups in connection with minimal dimension, and subsequently for
almost simple and arbitrary finite groups \cite{burn,burn20}.

A natural question arising from the definition of \(\IN(G)\) is whether the same number of maximal subgroups always suffices, regardless of which maximal subgroups are chosen. The groups $\operatorname{Q}_8$ and $\dic_3$ provide simple examples where this stronger property holds. Indeed, $\IN(\operatorname{Q}_8)=2$ and $\IN(\dic_3)=2$.  Moreover, in both $\operatorname{Q}_8$ and $\operatorname{Dic}_3$, the intersection of any two distinct maximal subgroups is equal to the Frattini subgroup. Thus, in
these examples, the minimum number of maximal subgroups required to obtain
the Frattini subgroup also suffices uniformly for every choice of maximal
subgroups. This uniform behavior does not hold in general. For instance, $\IN(\dic_{30})=4$,
but the intersection of an arbitrary set of four distinct maximal subgroups
of $\dic_{30}$ need not be equal to
$\Phi(\dic_{30})$. On the other hand, the intersection of any
ten distinct maximal subgroups of $\dic_{30}$ is equal to
$\Phi(\dic_{30})$ \cite{hum}. This gap between minimal and uniform intersection behavior motivates us to define the notion of uniform intersection number of a finite group.

\begin{definition}
	Let $G$ be a finite group. The \emph{uniform intersection number} of $G$,
	denoted by $\UIN(G)$, is the smallest positive integer $k$ such that the
	intersection of any $k$ distinct maximal subgroups of $G$ is equal to the
	Frattini subgroup $\Phi(G)$.
\end{definition}

Clearly, $\IN(G)\leq \UIN(G)\leq |\mathcal{M}(G)|$. For instance, $\IN(\operatorname{Dic}_{30}) <\UIN(\operatorname{Dic}_{30})=10.$
One of the main goal of this article is to characterize the finite
groups $G$ for which
\[
\IN(G)=\UIN(G).
\]
To investigate this problem, we use a graph-theoretic approach based on the
subgroup structure of $G$.

Graphs associated with algebraic structures have attracted considerable
attention in algebraic combinatorics over the past two decades
\cite{aba,ali,cameron1,helf}. Such graphs provide a useful framework for
studying structural properties of groups and for exploring the interaction
between algebraic and graph-theoretic invariants. Among the various graphs
associated with groups are Cayley graphs \cite{kelb}, commuting graphs, deep
commuting graphs \cite{cameron}, power graphs \cite{ali}, difference graphs
\cite{biswas}, generating graphs \cite{cameron2,lucchini,lucchini1},
permutability graphs \cite{bolin}, conjugacy class graphs
\cite{bian,bert,chil}, and character degree graphs \cite{isaac}.

The subgroup structure of a group plays a fundamental role in understanding
the group itself \cite{baer,suzuki}. Accordingly, several graphs have been
defined on the system of subgroups of a group, including intersection graphs,
inclusion graphs, comaximal graphs of subgroups \cite{akbari}, and
subgroup-based generating graphs \cite{wei,lu}. In \cite{wei}, the following
graph was introduced for finite groups.

\begin{definition}
	Let $G$ be a finite group. The graph $\Gamma(G)$ has as its vertices the
	nontrivial proper subgroups of $G$, and two distinct vertices $S_1$ and $S_2$
	are adjacent if and only if $ \langle S_1,S_2\rangle=G$.
\end{definition}

In \cite{wei}, the authors characterized the finite groups for which
$\Gamma(G)$ is connected, a star, or a tree, as well as those for which it
has a universal vertex.  Subsequently, Lu--Meng--Qin--wei--Zhang\ \cite{lu} characterized the finite groups for which $\Gamma(G)$ is planar. They further studied the graph
$\Gamma^*(G)$, obtained from $\Gamma(G)$ by deleting its isolated vertices,
and characterized the finite groups for which $\Gamma^*(G)$ is complete, a
star, or has a universal vertex. For a finite group $G$, the isolated vertices of $\Gamma(G)$ are precisely
the nontrivial subgroups contained in the Frattini subgroup $\Phi(G)$. Thus,
the Frattini subgroup provides a natural connection between the intersection
properties of maximal subgroups and the subgroup-based generating graph. 

The main objective of this article is to investigate the uniform intersection number of finite groups through the subgroup-based generating graph. To this end, we study several structural properties of this graph and show that some of our results remain valid for finitely generated groups, thereby broadening the scope of the theory beyond the finite setting. Motivated by the preceding discussion, we introduce the following graph associated with a finitely generated group.

\begin{definition}
	Let $G$ be a finitely generated group. The \emph{subgroup-based generating graph} of $G$, denoted
	by $\mathcal{G}(G)$, is the graph whose vertices are the nontrivial proper
	subgroups of $G$ that are not contained in $\Phi(G)$. Two distinct vertices
	$S_1$ and $S_2$ are adjacent if and only if $ \langle S_1,S_2\rangle=G$.
\end{definition}
For a finite group $G$, the graph $\mathcal{G}(G)$ coincides with
$\Gamma^*(G)$. Thus, $\mathcal{G}(G)$ may be viewed as a natural extension of
$\Gamma^*(G)$ beyond the finite setting.

\medskip

We now state our main results.  In Proposition \ref{propdiam}, we show that for any finitely generated group $G$, the diameter of $\mathcal{G}(G)$ is at most three. In our first main result, we classify the family of finite abelian groups in terms of the diameter of the subgroup based generating graph.
\begin{utheorem}\label{finiteabedia}
	Let $G$ be a finite abelian group. Then the following holds:
	\begin{enumerate}[(1)]
		\item $\diam(\mathcal{G}(G))=1$ if and only if $G\cong \mathbb{Z}_{pq}$ or $G\cong \mathbb{Z}_p\times \mathbb{Z}_p$, where $p$ and $q$ are distinct primes.
		\item  $\diam(\mathcal{G}(G))=2$ if and only if $G\cong \mathbb{Z}_{p^{n_1}}\times \mathbb{Z}_{p^{n_2}}$ with $n_1n_2\geq 2$, or $G\cong \prod_{i=1}^{n_1}\mathbb{Z}_{p^{t_{1i}}}$ with $n_1\geq 2$ and $\sum_{i=1}^{n_1}t_{1i}\geq 3$. 
		
		\item  $\diam(\mathcal{G}(G))=3$ if and only if the order of $G$ is divisible by at least three distinct primes or $G\cong \prod_{i=1}^{n_1}\mathbb{Z}_{p_1^{t_{1i}}}\times \prod_{i=1}^{n_2}\mathbb{Z}_{p_2^{t_{1i}}} $, where $n_1n_2\geq 2$ and $p_1$, $p_2$ are distinct primes.
	\end{enumerate}
\end{utheorem}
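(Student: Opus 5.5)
Throughout, $G$ is a finite abelian group written as $G=P_1\times\cdots\times P_k$, with $P_i$ the Sylow $p_i$-subgroup. The plan rests on one translation. Every maximal subgroup of $G$ has the form $m=H_i\times\prod_{j\neq i}P_j$ with $H_i$ maximal in $P_i$, and $\Phi(G)=\prod_i\Phi(P_i)$. Hence two vertices $S,T$ are adjacent if and only if $M(S)\cap M(T)=\emptyset$. Equivalently, for every $i$ the images of $S$ and $T$ in $P_i/\Phi(P_i)$, a vector space over $\mathbb{F}_{p_i}$, together span the whole space. Here we use that for a $p$-group a subgroup $X$ equals $P_i$ exactly when $X\Phi(P_i)=P_i$. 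By Proposition \ref{propdiam} the diameter is at most $3$, so it suffices to decide when the diameter is $1$, when a non-adjacent pair at distance $2$ exists, and when some pair has no common neighbour. Part (3) then follows as the complement of (1) and (2) once every case is placed. I read the first family in (2) as $\mathbb{Z}_{p^{n_1}}\times\mathbb{Z}_{q^{n_2}}$ with $p\neq q$, i.e. the cyclic group with two prime divisors, since that is the only reading consistent with the trichotomy.

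\emph{$p$-groups.} Let $k=1$, $V=G/\Phi(G)$ and $d=\dim V$. If $d=1$ then $G$ is cyclic and $\mathcal{G}(G)$ is empty, so I will note that this case is excluded. For $d\ge 2$, adjacency means $\bar S+\bar T=V$.
\begin{itemize}
\item If $G\cong\mathbb{Z}_p\times\mathbb{Z}_p$, the vertices are the $p+1$ lines and any two distinct lines span $V$, so the graph is complete.
\item Otherwise either $d\ge 3$, and two distinct lines give a non-adjacent pair, or $\Phi(G)\ne 1$, and $S<S\Phi(G)$ give two distinct vertices with the same image, hence non-adjacent. So the diameter is at least $2$.
\item For the upper bound I take non-adjacent vertices $S,T$. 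I choose a hyperplane $W$ of $V$ containing neither $\bar S$ nor $\bar T$; such a $W$ exists because the hyperplanes containing a nonzero subspace form a proper subspace of $V^*$, and two proper subspaces cannot cover $V^*$. The preimage of $W$ is a common neighbour, so the diameter is $2$. This matches (1) and the second family of (2).
\end{itemize}

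\emph{Two or more primes.} The first step is a criterion for a common neighbour. Suppose $T$ is a common neighbour of $P_1$ and $P_2$. Then $TP_1=G$ forces $T\supseteq P_j$ for all $j\neq 1$, and $TP_2=G$ forces $T\supseteq P_1$. Hence $T=G$, a contradiction. I now apply this in three cases.
\begin{itemize}
\item If $k\ge 3$, then $P_1$ and $P_2$ are non-adjacent, since $\langle P_1,P_2\rangle$ misses $P_3$. By the criterion they have no common neighbour, so their distance is $3$.
\item If $k=2$ and say $P_1$ is non-cyclic, I take $S=\langle x\rangle$ with $x\in P_1\setminus\Phi(P_1)$. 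Then $S$ and $P_2$ are non-adjacent, because $S\times P_2$ is proper. A common neighbour $T$ must contain $P_1$, from $TP_2=G$, and must contain $P_2$, because $S$ contributes nothing to $P_2$. So again $T=G$ and the distance is $3$. This gives (3).
\item If $G$ is cyclic of order $p^aq^b$, the vertices are exactly the subgroups containing $P_1$ or $P_2$, and two vertices are non-adjacent iff both contain the same Sylow subgroup, say $P_1$. The vertex $P_2$ is then a common neighbour. A non-adjacent pair exists iff $a+b\ge 3$, and for $a=b=1$ the graph is the single edge $P_1P_2$. This yields (1) and the first family of (2).
\end{itemize}

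The main obstacle I expect is not any single case but checking that these cases exhaust all finite abelian groups. I also need to match the somewhat garbled indexing in the statement, namely $n_1n_2\ge2$ and the repeated $t_{1i}$, to ``two primes with a non-cyclic Sylow subgroup''. The only genuinely non-formal step is the Frattini argument lifting spanning modulo $\Phi(P_i)$ to equality with $P_i$.
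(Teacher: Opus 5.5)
Your proposal is correct and follows essentially the paper's route: the same case split by the number of prime divisors, the same non-adjacent pairs with no common neighbour ($P_1,P_2$ for $k\geq 3$; a cyclic subgroup of a non-cyclic Sylow subgroup against the other Sylow subgroup for $k=2$), the complete bipartite picture for cyclic groups of order $p^aq^b$, and, for $p$-groups, a maximal subgroup containing neither vertex. Your hyperplane-avoidance argument in $G/\Phi(G)$ is the paper's explicit homomorphism $\varphi\colon G\to\mathbb{Z}_p$ in linear-algebra form, and your reading of the first family in (2) as $\mathbb{Z}_{p^{n_1}}\times\mathbb{Z}_{q^{n_2}}$ with $p\neq q$ matches the paper's Case~2.

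One step needs tightening. In your direct lower bound for non-cyclic $p$-groups other than $\mathbb{Z}_p\times\mathbb{Z}_p$ (the paper instead invokes Theorem~\ref{lmcom}), $S$ must be chosen with $\Phi(G)\not\subseteq S$. For example, $S=\langle(1,0)\rangle$ in $\mathbb{Z}_4\times\mathbb{Z}_2$ has $S\Phi(G)=S$. Writing $G=\prod_i\mathbb{Z}_{p^{t_i}}$ with $t_1\geq 2$ and taking $S$ to be the second cyclic factor works, since $p$ times a generator of the first factor lies in $\Phi(G)\setminus S$.
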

In our next main result, we characterize finite groups in terms of the girth of their subgroup-based generating graphs.
\begin{utheorem}\label{corfggirth}
	Let $G$ be a finite group such that $\mathcal{G}(G)$ is nontrivial. Then 
	\begin{equation*}
		\gr (\mathcal{G}(G))=\begin{cases}
			4~ &\text{if}~ G\cong \mathbb{Z}_{p_1^{t_1}p_2^{t_2}} ~~\text{ with } t_1,t_2\geq 2,~\\
			\infty~ &\text{if}~G\cong \mathbb{Z}_{{p_1^{t_1}p_2^{t_2}}}~ \text{ with } t_1=1 \text{ or } t_2=1,~\\ 
			3~ &\text{otherwise},
		\end{cases}
	\end{equation*} 
	where \(p_1\) and \(p_2\) are distinct primes.
\end{utheorem}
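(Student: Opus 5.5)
Here is how I would prove the girth formula of Theorem~\ref{corfggirth}. The plan has two parts. First, reduce to a small number of structural cases according to how many maximal subgroups $G$ has and how they intersect. Second, in the generic case, exhibit a triangle directly.

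\emph{Reduction.} The basic observation is that three vertices $S_1,S_2,S_3$ form a triangle as soon as no maximal subgroup contains two of them. Equivalently, for $i\neq j$ the sets $M(S_i)$ and $M(S_j)$ must be disjoint.

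Suppose first that $G$ has three distinct maximal subgroups $m_1,m_2,m_3$ for which we can choose $x_i\in m_i$ with $x_i$ lying in no other maximal subgroup containing $x_j$. Then the cyclic subgroups $\langle x_i\rangle$ give a triangle. To produce such elements I would argue as follows.

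\begin{itemize}
\item If $G$ is not cyclic, take a maximal subgroup $m$ and a non-Frattini element. More concretely, use three maximal subgroups $m_1,m_2,m_3$ themselves as vertices whenever $\langle m_i,m_j\rangle=G$ for $i\neq j$. This holds automatically, since distinct maximal subgroups generate $G$.
\item So any group with at least three maximal subgroups has $\gr(\mathcal{G}(G))=3$, because the three maximal subgroups are pairwise adjacent vertices. The one check needed is that they are genuine vertices: nontrivial, and not contained in $\Phi(G)$. Both hold provided $G$ is not cyclic of prime order.
\end{itemize}

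Thus the whole difficulty lies in groups with at most two maximal subgroups.

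\emph{Few maximal subgroups.} A finite group with exactly one maximal subgroup is cyclic of prime power order. In that case $\mathcal{G}(G)$ is empty, so that case is excluded by the hypothesis that $\mathcal{G}(G)$ is nontrivial. A finite group with exactly two maximal subgroups $m_1,m_2$ is cyclic with $|G|=p_1^{t_1}p_2^{t_2}$. The reason is that $G/\Phi(G)$ then has two maximal subgroups, which forces it to be nilpotent, hence $G$ is nilpotent, and a nilpotent group with two maximal subgroups must be $\mathbb{Z}_{p_1^{t_1}p_2^{t_2}}$.

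So it remains to compute the girth for $G=\mathbb{Z}_n$ with $n=p_1^{t_1}p_2^{t_2}$. Here the vertices are the subgroups $\langle d\rangle$ with $d\mid n$, $1<d<n$, such that the subgroup is not contained in $\Phi=\langle p_1p_2\rangle$. Equivalently, the subgroup order $n/d$ is divisible by $p_1^{t_1}$ or by $p_2^{t_2}$. Write such a vertex as having full $p_1$-part (type $A$) or full $p_2$-part (type $B$); it cannot be both, since it is proper.

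Two subgroups are adjacent if and only if the lcm of their orders is $n$. Hence type $A$ vertices are pairwise nonadjacent, and likewise type $B$. So $\mathcal{G}(G)$ is bipartite and the girth is $\ge 4$.

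Within this bipartite graph, a type $A$ vertex of order $p_1^{t_1}p_2^{a}$ with $a<t_2$ is adjacent to a type $B$ vertex of order $p_1^{b}p_2^{t_2}$ with $b<t_1$, and this holds for all such $a,b$. So the graph is complete bipartite $K_{t_2,t_1}$, with parts of sizes $t_2$ and $t_1$. It therefore has a $4$-cycle if and only if $t_1,t_2\ge 2$, and is a star (girth $\infty$) otherwise.

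\emph{Main obstacle.} The main obstacle I expect is the step classifying groups with exactly two maximal subgroups as cyclic of order $p_1^{t_1}p_2^{t_2}$. I would try the Frattini quotient first. If $G$ were non-nilpotent, some maximal subgroup would be non-normal with at least two conjugates; together with the other maximal subgroup this would give at least three maximal subgroups. So both maximal subgroups are normal and $G$ is nilpotent. A nilpotent group is the product of its Sylow subgroups, and the number of its maximal subgroups counts them across the Sylow factors. Since a non-cyclic $p$-group already has at least $p+1\ge3$ maximal subgroups, having exactly two maximal subgroups forces exactly two Sylow factors, each cyclic.
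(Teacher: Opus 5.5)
Your proposal is correct and follows essentially the paper's route. Three maximal subgroups give a triangle; exactly two force $G\cong\mathbb{Z}_{p_1^{t_1}p_2^{t_2}}$ (the paper cites Lemma~\ref{lmm2} where you reprove it); and there $\mathcal{G}(G)\cong K_{t_1,t_2}$, which yields girth $4$ or $\infty$.

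One small repair is needed in your reproof of the two-maximal-subgroup step. The ``other'' maximal subgroup could itself be a conjugate of the non-normal one, so argue instead that a non-normal maximal subgroup $m$ is self-normalizing and has $[G:m]\geq 3$ conjugates, since an index-$2$ subgroup would be normal. More simply, pick $g\notin m_1\cup m_2$, which exists because no group is a union of two proper subgroups; then $\langle g\rangle=G$, so $G$ is cyclic.
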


In our next main result, we classify the finite groups for which $\IN(G)=\UIN(G)=2$. We show that this class coincides precisely with the family of finite groups whose subgroup-based generating graphs are complete multipartite.
\begin{utheorem}\label{thmcmg}
	Let $G$ be a finite group. Then the following are equivalent.
	\begin{enumerate}[(1)]
		\item  $\IN(G)=\UIN(G)=2$.
		\item $\mathcal{G}(G)$ is a complete multipartite graph.
		\item $G$ is isomorphic to one of the following  groups:
		\begin{enumerate}[(a)]
			\item  $G\cong \mathbb{Z}_{p^s}\times \mathbb{Z}_{p^t}$, where $s+t=k$ such that $\vert G\vert =p^k$ or $G$ is a $2$-generated non abelian $p$-group.
			\item  $G\cong \mathbb{Z}_{p_1^{n_1}p_2^{n_2}}$, where $p_1$ and $p_2$ are distinct primes and $n_i\geq 2$.
			\item $G\cong P_1\rtimes P_2$, where $P_1$ is an elementary abelian Sylow $p_1$-group of order $p_1$ and $P_2$ is a Sylow subgroup of $G$ of prime order $p_2$ with $p_2|(p_1-1)$.
		\end{enumerate}
	\end{enumerate}
\end{utheorem}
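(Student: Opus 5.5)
The plan is to prove $(1)\Leftrightarrow(2)$ directly from the definitions, and then $(1)\Leftrightarrow(3)$ by passing to $\bar G=G/\Phi(G)$. Throughout, non-adjacency in $\mathcal{G}(G)$ is the key notion. For distinct vertices $S_1,S_2$ we have $\langle S_1,S_2\rangle\neq G$ if and only if $M(S_1)\cap M(S_2)\neq\emptyset$, using that $G$ is finite so every proper subgroup lies in a maximal one.

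\emph{$(1)\Rightarrow(2)$.} Assume $\UIN(G)=2$. Let $S$ be a vertex. Then $|M(S)|=1$, for if $S\le m_1\cap m_2$ with $m_1\neq m_2$, then $S\le\Phi(G)$, which is impossible for a vertex. Hence $S_1,S_2$ are non-adjacent exactly when their unique maximal overgroups coincide. Non-adjacency (together with equality) is then an equivalence relation, whose classes are indexed by $\mathcal{M}(G)$. Each class is an independent set and any two classes are completely joined, so $\mathcal{G}(G)$ is complete multipartite. The condition $\IN(G)=2$ gives $|\mathcal{M}(G)|\ge2$, so there are at least two parts.

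\emph{$(2)\Rightarrow(1)$.} Suppose $m_1\neq m_2$ are maximal subgroups with $S:=m_1\cap m_2\not\le\Phi(G)$. Then $S$ is a vertex distinct from $m_1$ and $m_2$, and it is non-adjacent to both. In a complete multipartite graph non-adjacency is transitive, so $m_1$ and $m_2$ would be non-adjacent. This contradicts $\langle m_1,m_2\rangle=G$. Hence any two distinct maximal subgroups meet in $\Phi(G)$, so $\UIN(G)\le2$. Having at least two parts forces $|\mathcal{M}(G)|\ge2$, and therefore $\IN(G)=\UIN(G)=2$; note that $\IN(G)=1$ would mean $G$ is a cyclic $p$-group, whose graph is empty. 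I will have to check the paper's convention on degenerate complete multipartite graphs, namely complete graphs or singleton parts. This is the only point where cases such as $\mathbb{Z}_{pq}$ might be excluded, and it is why $n_i\ge2$ appears in (b).

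\emph{$(1)\Leftrightarrow(3)$.} Maximal subgroups of $G$ correspond to those of $\bar G$, and $\Phi(\bar G)=1$. So (1) holds if and only if $\bar G$ has at least two maximal subgroups and any two of them intersect trivially.

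\emph{Nilpotent case.} Here $\bar G$ is a product of elementary abelian Sylow subgroups. For a single prime, $\bar G\cong\mathbb{Z}_p^d$, and two distinct hyperplanes intersect trivially exactly when $d=2$. By Burnside's basis theorem this gives the $2$-generated $p$-groups in (a). With two primes, $\bar G=A\times B$, and taking maximal subgroups $A'\times B$ and $A\times B'$ forces $|A|=p_1$ and $|B|=p_2$. Then $G$ has cyclic Sylow subgroups, which is (b). Three primes is impossible, since the same argument produces a nontrivial intersection.

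\emph{Non-nilpotent case.} Some maximal subgroup $H$ of $\bar G$ is not normal, and $H\cap H^g=1$ for $g\notin H$. By Frobenius' theorem $\bar G=N\rtimes H$ is a Frobenius group with kernel $N$. If $H'$ is a proper nontrivial subgroup of $H$, then $NH'$ lies in a maximal subgroup meeting $N$ nontrivially, and hence meeting $NH$-type maximals nontrivially. This forces $|H|=p_2$ prime. Also $\Phi(\bar G)=1$ forces $N$ to be elementary abelian and minimal normal, and $p_2\mid |N|-1$. The converse in each case is a direct check of pairwise intersections. Finally I would lift from $\bar G$ to $G$, showing $\Phi(G)=1$ in the Frobenius case, for instance because a Sylow $p_2$-subgroup is then cyclic of order $p_2$ and complements a normal subgroup.

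\textbf{Main obstacle.} The hardest step is the non-nilpotent classification: showing that $N$ has prime order $p_1$ as claimed in (c). I expect to need irreducibility of the action of $H$ on $N$ to rule out larger $N$. I must also be careful here, since an example like $A_4$ appears to satisfy (1) with $|N|=4$. If it does, I would reformulate (c) as ``$N$ is an elementary abelian minimal normal subgroup on which a group of prime order acts fixed-point-freely.''
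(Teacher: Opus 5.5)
Your proof of $(1)\Leftrightarrow(2)$ is correct and is the paper's argument (Corollary~\ref{corui}). Passing to $\bar G=G/\Phi(G)$, where (1) becomes ``at least two maximal subgroups, pairwise meeting trivially'', is a cleaner route to (3) than the paper's quotient lemma plus case analysis.

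However, the statement is false as written, and your own analysis already shows this; you should say so rather than hedge.
\begin{itemize}
\item Your two-prime computation gives $\mathbb{Z}_{p_1^{n_1}p_2^{n_2}}$ for all $n_1,n_2\geq 1$, which is not (b). For example, $\mathbb{Z}_{12}$ has maximal subgroups $\langle 2\rangle,\langle 3\rangle$ meeting in $\langle 6\rangle=\Phi(\mathbb{Z}_{12})$, so (1) holds, and $\mathcal{G}(\mathbb{Z}_{12})\cong K_{1,2}$.
\item No convention on degenerate multipartite graphs can explain $n_i\geq 2$, because (1) does not involve the graph. Moreover, excluding complete graphs or singleton parts would also exclude $\operatorname{Q}_8$, $\mathbb{Z}_p\times\mathbb{Z}_p$ and $\mathbb{Z}_{p_1}\rtimes\mathbb{Z}_{p_2}$, which (3) lists. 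The paper's own Proposition~\ref{propnilcmg} has no such restriction.
\item Your $A_4$ suspicion is also right. If $\mathbb{Z}_{p_2}$ acts faithfully and irreducibly on $N\cong\mathbb{Z}_{p_1}^r$, the maximal subgroups of $N\rtimes\mathbb{Z}_{p_2}$ are $N$ and the complements, and these pairwise meet trivially. The paper's proof fails exactly at its claim that $r\geq 2$ produces maximal subgroups $k_1\rtimes\mathbb{Z}_{p_2}$, $k_2\rtimes\mathbb{Z}_{p_2}$ with nontrivial intersection. Under an irreducible action no such invariant $k_i$ exist.
\end{itemize}

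The genuine error in your own plan is the last step: lifting from $\bar G$ to $G$ by showing $\Phi(G)=1$ in the Frobenius case. This is false, and so is the claim that the Sylow $p_2$-subgroup has order $p_2$.
\begin{itemize}
\item Take $G=\langle a,b : a^7=b^9=1,\ bab^{-1}=a^2\rangle$. Since $2$ has order $3$ modulo $7$, $b^3$ is central.
\item The maximal subgroups are $\langle a,b^3\rangle\cong\mathbb{Z}_{21}$ and the seven cyclic Sylow $3$-subgroups. Any two of them meet in $\langle b^3\rangle=\Phi(G)$.
\item So $G$ satisfies (1) and (2), and $G/\Phi(G)\cong\mathbb{Z}_7\rtimes\mathbb{Z}_3$, yet $|\Phi(G)|=3$. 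The paper's Case~2 makes the same error, resting on an unproved ``one can verify''.
\end{itemize}

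Because (1) depends only on $\bar G$, no lifting is needed. The correct conclusion of your argument is that (1) holds if and only if one of the following holds:
\begin{itemize}
\item $G$ is a $2$-generated $p$-group;
\item $G\cong\mathbb{Z}_{p_1^{n_1}p_2^{n_2}}$ with $n_1,n_2\geq 1$;
\item $G/\Phi(G)\cong\mathbb{Z}_{p_1}^r\rtimes\mathbb{Z}_{p_2}$ with faithful irreducible action.
\end{itemize}

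When you write this up, also repair two loose steps.
\begin{itemize}
\item For $|H|$ prime: a subgroup $1<H'<H$ gives $NH'\leq M$ for some maximal $M\neq H$, hence $1\neq H'\leq M\cap H$, a contradiction.
\item Minimality of $N$ comes from maximality of $H$, not from $\Phi(\bar G)=1$: an $H$-invariant $1<N_1<N$ would give $H<N_1H<\bar G$.
\end{itemize}
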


In our final main result, we characterize the family of finite groups whose subgroup based generating graphs are perfect.  Recall that a graph \(G\) is said to be perfect if, for every induced subgraph \(H\) of \(G\), the chromatic number of \(H\) equals its clique number. To state our final our final main result, we first introduce the following definitions.

 A subgroup $H$ of a finite group $G$ is a maximal intersection if $H=\bigcap_{i\in \Lambda}m_i$, $\Lambda$ is an index set and $m_i\in \mathcal{M}(G)$  for each $i\in \Lambda$ \cite{lucchini3}. Let $\mathcal{S_P(G)}$ denote the set consisting of $G$ together with all maximal intersection subgroups of $G$. Then \(\mathcal{S}_P(G)\) forms a subposet of the subgroup lattice of $G$.

\begin{utheorem}\label{perfectg}
	Let $G$ be a finite group. Then $\mathcal{G}(G)$ is perfect if and only if $G$ is isomorphic to one of the following groups:
	\begin{enumerate}[(a)]
		\item $G\cong \mathbb{Z}_{p^k}$, $\mathbb{Z}_{{p_1}^{n_1}{p_2}^{n_2}}$, $\mathbb{Z}_{{{p_1}^{n_1}}{{p_2}^{n_2}}{{p_3}^{n_3}}}$,
		\item $G$ is a $2$-group, 
		\item $G$ has exactly four maximal subgroups,
		\item $G\cong \mathbb{Z}_{p^s}\times \mathbb{Z}_{p^t}$, where $s+t=k$ such that $\vert G\vert=p^k$ or $G$ is a $2$-generated non abelian group of order $p^k$,
		\item $G\cong P_1\rtimes P_2$, where $P_1$ is an elementary abelian sylow $p_1$-group of order $p_1$ and $P_2$ is a Sylow subgroup of $G$ of prime order $p_2$, and $p_2|(p_1-1)$.
		\item  $G$ is a finite group with $\IN(G)=2 <\UIN(G)\leq 5$ and there does not exist $p_i\in \mathcal{S_P(G)}$ such that they form an induced cycle of odd length at least five in either $\mathcal{G}(G)$ or $\overline{\mathcal{G}(G)}$.
	\end{enumerate}
\end{utheorem}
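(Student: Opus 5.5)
The plan is to reduce perfectness of $\mathcal{G}(G)$ to a statement about the poset $\mathcal{S}_P(G)$ and then apply the Strong Perfect Graph Theorem. A graph is perfect if and only if neither it nor its complement contains an induced odd cycle of length at least five. The basic reduction is that adjacency of $S_1,S_2$ depends only on $M(S_1)$ and $M(S_2)$: $S_1\sim S_2$ if and only if $M(S_1)\cap M(S_2)=\emptyset$. Replace each vertex $S$ by its closure $\overline{S}=\bigcap_{m\in M(S)}m\in\mathcal{S}_P(G)$. Vertices with the same closure form a module of pairwise non-adjacent vertices. So $\mathcal{G}(G)$ is obtained from the graph $\mathcal{H}$ on the proper members of $\mathcal{S}_P(G)$ not contained in $\Phi(G)$ by substituting independent sets for vertices. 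By Lov\'asz's replication lemma, $\mathcal{G}(G)$ is perfect if and only if $\mathcal{H}$ is. Moreover, an induced odd hole or antihole of length at least five uses vertices with pairwise distinct closures, since twins cannot both lie in such a cycle. Hence such a configuration in $\mathcal{G}(G)$ or its complement may be assumed to consist of elements $p_i\in\mathcal{S}_P(G)$. This immediately gives the equivalence in case (f) once $\IN(G)=2<\UIN(G)\le 5$ is granted.

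The remaining work is a case analysis on $\IN(G)$, $\UIN(G)$ and $|\mathcal{M}(G)|$, where each vertex is encoded by the nonempty set $M(S)\subseteq\mathcal{M}(G)$ and adjacency means disjointness.

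\emph{If $\IN(G)=\UIN(G)=2$.} By Theorem \ref{thmcmg}, $\mathcal{G}(G)$ is complete multipartite, hence perfect. This yields families (d), (e) and part of (a).

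\emph{If $G$ is cyclic.} Then $\mathcal{M}(G)$ is indexed by the primes dividing $n$, and every subset of the primes is realised as some $M(S)$. With at most three primes, the disjointness graph on nonempty subsets of a set of size at most $3$ has no odd hole or antihole, giving (a). With four primes one finds a $C_5$: take $\{1\},\{2,3\},\{1,4\},\{2\},\{3,4\}$ in cyclic order.

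\emph{If there are at most four maximal subgroups.} Disjointness graphs on subsets of a $4$-set are checked directly to be perfect, giving (c). For $2$-groups, every maximal subgroup is normal of index $2$, and $G/\Phi(G)$ is elementary abelian. Vertices then correspond to nonzero proper subspaces of $\mathbb{F}_2^d$ modulo $\Phi$, and one must check whether the resulting disjointness structure on maximal-subgroup sets is perfect. I expect to argue via the lattice of $\mathbb{F}_2$-subspaces, as claimed in (b).

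\emph{Necessity.} Beyond these families I will exhibit an explicit induced $C_5$, or the complement of $C_5$, inside $\mathcal{S}_P(G)$. Whenever $G$ has at least five maximal subgroups and some three of them have intersection strictly larger than $\Phi(G)$ (i.e. $\UIN(G)\ge 6$ or $\IN(G)\ge 3$), I will take intersections $m_1\cap m_2$, $m_3\cap m_4$, and so on, as in the cyclic four-prime example, to build a $C_5$ pattern on the index sets $M(\cdot)$.

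The main obstacle is this necessity direction, and specifically the $p$-group case with $p$ odd and the case $\IN(G)\ge 3$. There one must show that the needed intersections $m_i\cap m_j$ have exactly the prescribed sets $M(\cdot)$ and not larger ones. For $p$-groups I would work in $G/\Phi(G)\cong\mathbb{F}_p^d$ with $d\ge 3$. There, maximal subgroups are hyperplanes and the intersection of two hyperplanes lies in exactly $p+1$ hyperplanes. I would choose five codimension-$2$ subspaces realising a $C_5$ in the disjointness graph; this is possible for $p$ odd, which is where $2$-groups escape. For general $G$ with $\IN(G)\ge 3$, I would reduce modulo $\Phi(G)$ and use the counting of maximal subgroups containing a pairwise intersection to force the pattern.
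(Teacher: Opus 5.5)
There is a genuine gap, and it cannot be closed: the statement is false in both directions. The two places where your plan stalls (verifying (b), and the necessity argument for $p$-groups) are exactly where it fails.

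What is sound is your closure/twin reduction to $\mathcal{S}_P(G)$. False twins cannot both lie on an induced odd hole or antihole of length at least five, and blowing vertices up into independent sets preserves perfection. This is a more careful version of the paper's argument for (f).

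Your four-prime example, however, is wrong. The sets $\{1\}$ and $\{2\}$ are disjoint, hence adjacent, so your pentagon has a chord. In fact, by Corollary \ref{corcmo} every vertex $S$ on such a hole has $|M(S)|\ge 2$. With four maximal subgroups this forces $|M(S)|=2$, and disjointness on $2$-subsets of a $4$-set is a matching. This is the paper's Lemma \ref{propper}, and it is consistent with (c) and with your own next paragraph.

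For $p$-groups the decisive parameter is $d=\dim_{\mathbb{F}_p}G/\Phi(G)$, not the parity of $p$.
\begin{itemize}
\item \emph{Case $d=3$.} A vertex has $|M(S)|\ge 2$ only when $S\Phi(G)/\Phi(G)$ is a line, and any two lines lie in a common hyperplane. These vertices are therefore independent in $\mathcal{G}(G)$ and form a clique in $\overline{\mathcal{G}(G)}$. By Corollary \ref{corcmo} the graph is perfect for every $p$. Thus $\mathbb{Z}_3^3$, with $\IN(\mathbb{Z}_3^3)=3$ and thirteen maximal subgroups, is perfect but lies in none of (a)--(f).
\item \emph{The group $A_4$.} The same failure of necessity occurs here. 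Its five maximal subgroups meet pairwise trivially, so $\mathcal{G}(A_4)$ is complete multipartite. Yet $\UIN(A_4)=2$ and its normal Sylow subgroup has order $4$, so $A_4$ is not in the list. Your appeal to Theorem \ref{thmcmg} therefore also misses groups.
\item \emph{Case $d\ge 4$.} A pentagon exists for every $p$, including $p=2$. In $\mathbb{Z}_2^4$ let $x_1,\dots,x_4$ be a basis and $x_5=x_1+x_2+x_3+x_4$. Any four of the $x_i$ are independent, so the subgroups $P_i=\langle x_{i-1},x_i\rangle$ (indices mod $5$) satisfy $P_i+P_{i+1}\ne G$ and $P_i+P_{i+2}=G$. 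Hence $P_1,P_3,P_5,P_2,P_4$ is an induced $C_5$, and family (b) is false. The same construction works for every $p$-group with $d\ge 4$: add a fixed complement of $\langle x_1,\dots,x_4\rangle$ when $d>4$ and pass to preimages in $G$.
\end{itemize}

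The obstacle you isolated, namely that $M(m_i\cap m_j)$ may be strictly larger than $\{m_i,m_j\}$, is precisely what the paper's proof ignores. Its Lemma \ref{lmnperf} and its $\UIN(G)\ge 6$ step assume these sets are exact. The group $\mathbb{Z}_2^3$, with $\IN(\mathbb{Z}_2^3)=3$ and seven maximal subgroups, is perfect by the $d=3$ argument and so refutes that lemma. The paper's sufficiency argument for (b) also wrongly asserts that $2$-groups have at most four maximal subgroups. A correct classification must be organised by which families of sets $M(S)$ actually occur, not by $\IN(G)$ and $\UIN(G)$.
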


The rest of the paper is organized as follows: In Section~\ref{secui}, we establish a condition for a finite group \(G\) to satisfy \(\IN(G)=\UIN(G)\) in terms of the poset of maximal subgroups of \(G\). We also determine the uniform intersection number for several families of non-nilpotent groups, including dihedral groups $\operatorname{D}_{2n}$, dicyclic groups $\operatorname{Dic}_{n}$, and quasi dicyclic groups $\operatorname{QDic}_n$. Section~\ref{secGg} is devoted to the subgroup-based generating graph, where we prove Theorems \ref{finiteabedia}, \ref{corfggirth}, \ref{thmcmg}, and \ref{perfectg}. In the penultimate section of the paper, we study the subgroup-based generating graph of $\mathbb{Z}_n$ and determine its minimum and maximum degrees, and number of pendant vertices. Finally, we classifying the cyclic group $\mathbb{Z}_n$ for which $\mathcal{G}(\mathbb{Z}_n)$ is Hamiltonian.

\section{Uniform intersection number of a group}\label{secui}

In this section, we establish some basic properties of the uniform intersection number of finite groups and determine this invariant for several classes of groups. We begin by relating the uniform intersection number of the direct product of two finite groups $G_1$ and $G_2$ to the uniform intersection numbers of $G_1$ and $G_2$.


\begin{proposition}\label{propprodphi}
    Let $G_1$ and $G_2$ be two finite groups. Then $\UIN(G_1\times G_2)\leq \UIN(G_1)+ \UIN(G_2)$.
\end{proposition}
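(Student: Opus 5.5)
The plan is to describe the maximal subgroups of $G=G_1\times G_2$, use the identity $\Phi(G_1\times G_2)=\Phi(G_1)\times\Phi(G_2)$, and finish with a pigeonhole count. Write $u_i=\UIN(G_i)$ and $k=u_1+u_2$, and identify $G_1$ and $G_2$ with the subgroups $G_1\times 1$ and $1\times G_2$. Goursat's lemma gives three types of maximal subgroup $M$ of $G$:
\begin{enumerate}[(i)]
\item $M=M_1\times G_2$ with $M_1\in\mathcal{M}(G_1)$;
\item $M=G_1\times M_2$ with $M_2\in\mathcal{M}(G_2)$;
\item $M$ is a ``diagonal'' subdirect product: there are normal subgroups $N_i\trianglelefteq G_i$ and an isomorphism $\theta\colon G_1/N_1\to G_2/N_2$ onto a simple group $S$, and $M=\{(x,y):\theta(xN_1)=yN_2\}$.
\end{enumerate}
The inclusion $\Phi(G)\le$ any intersection of maximal subgroups is automatic. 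So it suffices to show that if $M^{(1)},\dots,M^{(k)}$ are distinct maximal subgroups of $G$, then $I=\bigcap_j M^{(j)}$ lies in $\Phi(G_1)\times\Phi(G_2)$.

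Suppose first that all $k$ subgroups are of types (i) or (ii). By pigeonhole, either at least $u_1$ of them have type (i) or at least $u_2$ have type (ii). Distinct subgroups of type (i) have distinct $M_1$. So in the first case the projection of $I$ to $G_1$ lies in an intersection of $u_1$ distinct maximal subgroups of $G_1$, which equals $\Phi(G_1)$. The same holds for $G_2$ in the second case. This controls only one coordinate of $I$, however. To control the other I will look at normal closures and images rather than projections. Since $I\ge \Phi(G)$, it is enough to work in $G/\Phi(G)\cong G_1/\Phi(G_1)\times G_2/\Phi(G_2)$, so I may assume $\Phi(G_1)=\Phi(G_2)=1$. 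Then I try to show that a subgroup $I$ with trivial projection to one factor, and contained in the remaining maximal subgroups, is trivial.

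The main obstacle is type (iii) together with this one-coordinate problem. A diagonal $M$ satisfies $M\cap G_1=N_1$, and $N_1$ need not be a maximal subgroup of $G_1$ when $S$ is nonabelian. My first attempt is to associate to each diagonal $M$ the pair of subgroups of $G_1$ and $G_2$ it determines, and to show that a diagonal subgroup can only ``cut down'' $I$ inside $N_1\times N_2$. When $S\cong\mathbb{Z}_p$, the subgroup $N_i$ is a maximal normal subgroup of prime index, hence lies in $\mathcal{M}(G_i)$. Then each diagonal subgroup can be charged to a maximal subgroup of $G_1$ or of $G_2$, and the pigeonhole argument applies, with the check that two distinct diagonals, or a diagonal and a standard subgroup, are never charged to the same maximal subgroup of the same factor. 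For nonabelian $S$, I would use that $N_1$ is an intersection of maximal subgroups of $G_1$ containing it, and try to bound the count via the number of distinct isomorphisms $\theta$.

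I expect this counting step to be delicate. If it fails, I would look first at small examples such as $G_1=G_2=S$ simple, where the diagonal subgroups are numerous, to see whether the inequality needs extra hypotheses.
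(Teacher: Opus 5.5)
\textbf{There is a genuine gap, and it cannot be closed: the proposition is false as stated.} The failure is at the step you flag yourself, where pigeonhole controls only one coordinate.

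Nothing forces your $k=u_1+u_2$ subgroups to include any of type (ii). If $|\mathcal{M}(G_1)|\ge u_1+u_2$, you may take all of them of type (i). Then $I\ge 1\times G_2$, which is not contained in $\Phi(G_1)\times\Phi(G_2)$ because $\Phi(G_2)\neq G_2$. So your step ``a subgroup with trivial projection to one factor, lying in the remaining maximal subgroups, is trivial'' may have no remaining subgroups to work with.

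Concretely, take $G_1=\mathbb{Z}_2\times\mathbb{Z}_2$ and $G_2=\mathbb{Z}_3$, so $\UIN(G_1)=2$ and $\UIN(G_2)=1$. The group $G=G_1\times G_2$ has $\Phi(G)=1$ and exactly four maximal subgroups: $L\times\mathbb{Z}_3$ for the three subgroups $L$ of order $2$, and $G_1\times 1$. The three subgroups of index $2$ meet in $1\times\mathbb{Z}_3\neq\Phi(G)$, so $\UIN(G)=4>3$. The orders are coprime, so no diagonal subgroups occur here; the type (iii) counting is not where things break. Your charging scheme would fail anyway: in $\mathbb{Z}_p\times\mathbb{Z}_p$, the $p-1$ diagonals $\{(x,\lambda x)\}$ and $1\times\mathbb{Z}_p$ are all charged to the trivial subgroup of $G_1$.

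More generally, let $G_1,G_2$ be nontrivial. Take every $M_1\times G_2$ with $M_1\in\mathcal{M}(G_1)$. Add $\UIN(G_2)-1$ subgroups $G_1\times M_2$ whose intersection is not $\Phi(G_2)$; such a family exists by minimality of $\UIN(G_2)$, and it is empty if $\UIN(G_2)=1$, with empty intersection $G_2$. The intersection of the whole family is not $\Phi(G)$, which gives
\[
\UIN(G_1\times G_2)\;\ge\;\max\bigl\{|\mathcal{M}(G_1)|+\UIN(G_2),\;|\mathcal{M}(G_2)|+\UIN(G_1)\bigr\}.
\]
So the inequality fails whenever $\UIN(G_i)<|\mathcal{M}(G_i)|$ for some $i$. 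For coprime orders, every maximal subgroup is of product type, and this lower bound is the exact value.

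The paper's proof does not supply the missing step either. It exhibits one particular family of $\UIN(G_1)+\UIN(G_2)$ product-type maximal subgroups meeting in $\Phi(G_1)\times\Phi(G_2)$. That only bounds the non-uniform invariant; run with $\IN$ in place of $\UIN$, the same argument gives $\IN(G_1\times G_2)\le\IN(G_1)+\IN(G_2)$. It says nothing about an arbitrary family of that size. The same example also contradicts Corollary~\ref{corabliui}. Your instinct to test small cases before fighting the diagonals was right: $\mathbb{Z}_2^2\times\mathbb{Z}_3$, or $\mathbb{Z}_2^3$ with $\UIN(\mathbb{Z}_2^3)=4>2+1$, already settles the question.
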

\begin{proof}
  Let $\UIN(G_1)=k$ and $\UIN(G_2)=l$. Then $\bigcap_{i=1}^km_i=\Phi(G_1)$ and $\bigcap_{j=1}^ln_j=\Phi(G_2)$, where $m_i\in \mathcal{M}(G_1)$ for $i\in [k]$ and $n_j\in \mathcal{M}(G_2)$ for $j\in[l]$. Notice that, for each $i\in [k]$, $m_i\times G_2$ is a maximal subgroup of $G_1\times G_2$ and, for each $i\in[l]$, $G_1\times n_i$ is a maximal subgroup of $G_1\times G_2$. Since $\Phi(G_1\times G_2)=\Phi(G_1)\times \Phi(G_2)$, we have 
\begin{equation*}
    \bigcap_{i=1}^k(m_i\times G_2)\cap \bigcap_{j=1}^l(G_1\times n_j)=\Phi(G_1)\times \Phi(G_2)=\Phi(G_1\times G_2).
\end{equation*}
Thus $\UIN(G_1\times G_2)\leq \UIN(G_1)+\UIN(G_2)$.
\end{proof}
Let $G=\prod_{i=1}^kG_i$ be a finite abelian group of order $n=\prod_{i=1}^kp_i^{t_i}$ with \(|G_i|=p_i^{t_i}\) and \(p_1,\ldots,p_k\) distinct primes. By Goursat's theorem (see \cite{pet}), every subgroup $H$ of $G$ can be written as $H=\prod_{i=1}^kH_i$, where \(H_i\) is a subgroup of \(G_i\) for each \(i\in[k]\). Therefore, we obtain the following result:
\begin{corollary}\label{corabliui}
Let $G_1$ and $G_2$ be two finite abelian groups such that $\gc(|G_1|, |G_2|)=1$. Then $\IN(G_1\times G_2)=\IN(G_1)+ \IN(G_2)$ and $\UIN(G_1\times G_2)=\UIN(G_1)+ \UIN(G_2)$.
\end{corollary}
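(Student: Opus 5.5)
The plan is to describe the maximal subgroups of $G=G_1\times G_2$ exactly, and then count which families of them can intersect to $\Phi(G)$. Since $\gc(|G_1|,|G_2|)=1$, the Goursat-type decomposition quoted above says every subgroup $H$ of $G$ has the form $H=H_1\times H_2$ with $H_i\leq G_i$. Hence a maximal subgroup of $G$ is either $m\times G_2$ with $m\in\mathcal{M}(G_1)$, or $G_1\times n$ with $n\in\mathcal{M}(G_2)$. This gives a bijection
\[\mathcal{M}(G)\leftrightarrow \mathcal{M}(G_1)\sqcup\mathcal{M}(G_2),\]
and, as noted in Proposition \ref{propprodphi}, $\Phi(G)=\Phi(G_1)\times\Phi(G_2)$.

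Now take any family consisting of $a$ maximal subgroups of the first type, $m_1\times G_2,\dots,m_a\times G_2$, and $b$ of the second type, $G_1\times n_1,\dots,G_1\times n_b$. Its intersection is
\[\Bigl(\bigcap_{i=1}^a m_i\Bigr)\times\Bigl(\bigcap_{j=1}^b n_j\Bigr),\]
where an empty intersection means the whole factor. This equals $\Phi(G)$ exactly when $\bigcap m_i=\Phi(G_1)$ and $\bigcap n_j=\Phi(G_2)$.

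One edge case needs a convention. If $G_1$ is trivial, then $\mathcal{M}(G_1)=\emptyset$ and $\IN(G_1)$ should be read as $0$. I will handle this separately, or simply assume both factors are nontrivial. When $G_1$ is nontrivial, $\Phi(G_1)\neq G_1$, so we need $a\geq 1$, and similarly $b\geq1$.

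For $\IN$: a family realizing $\Phi(G)$ must have $a\geq\IN(G_1)$ and $b\geq\IN(G_2)$, so $\IN(G)\geq \IN(G_1)+\IN(G_2)$. Conversely, combining optimal families for $G_1$ and $G_2$ gives the reverse inequality.

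For $\UIN$, the upper bound is Proposition \ref{propprodphi}. The lower bound is the main obstacle, because we must exhibit a bad family of size $\UIN(G_1)+\UIN(G_2)-1$. Write $k=\UIN(G_1)$ and $l=\UIN(G_2)$. By minimality of $k$, some $k-1$ distinct members of $\mathcal{M}(G_1)$ do not intersect to $\Phi(G_1)$. I will take these, together with $l$ arbitrary members of $\mathcal{M}(G_2)$, which requires $|\mathcal{M}(G_2)|\geq l$; this holds since $l\leq|\mathcal{M}(G_2)|$. This gives $k-1+l$ distinct maximal subgroups of $G$, and the first coordinate of their intersection is not $\Phi(G_1)$. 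Hence $\UIN(G)>k+l-1$.

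The subtle point is the case $k=1$. Here "$k-1=0$ subgroups not intersecting to $\Phi(G_1)$" means the empty intersection $G_1\neq\Phi(G_1)$, which holds exactly because $G_1$ is nontrivial. So the argument goes through uniformly, provided the definition of $\UIN$ is read with families of size at most $|\mathcal{M}(G)|$, and both equalities follow.
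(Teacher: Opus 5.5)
The $\IN$ half of your argument is correct, and so is your lower bound $\UIN(G_1\times G_2)\geq \UIN(G_1)+\UIN(G_2)$. The gap is the upper bound, which you import from Proposition~\ref{propprodphi}. That proposition's proof produces only \emph{one} family of $k+l$ maximal subgroups meeting in $\Phi(G_1)\times\Phi(G_2)$. This bounds $\IN$. But $\UIN\leq k+l$ requires \emph{every} family of $k+l$ distinct maximal subgroups to meet in $\Phi(G)$.

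Your own description of $\mathcal{M}(G)$ shows this fails. A family of size $k+l$ can contain fewer than $l$ subgroups of the form $G_1\times n$ (even none, if $|\mathcal{M}(G_1)|\geq k+l$), and then the second factor of its intersection is not $\Phi(G_2)$. Concretely, let $G_1=\mathbb{Z}_2\times\mathbb{Z}_2$ and $G_2=\mathbb{Z}_3$, so $\UIN(G_1)=2$ and $\UIN(G_2)=1$. In $G=G_1\times G_2$ we have $\Phi(G)=0$. Yet the three maximal subgroups $L\times\mathbb{Z}_3$, with $L$ running over the subgroups of order $2$ of $G_1$, intersect in $0\times\mathbb{Z}_3\neq 0$. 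Hence $\UIN(G)=4=|\mathcal{M}(G)|$, not $3$. So the $\UIN$ half of the statement is false, and so is Proposition~\ref{propprodphi} itself; the paper's one-line justification also rests on that proposition. No argument can close this step.

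If you carry your $(A,B)$ split analysis through, you get the correct value. Take $G_1,G_2$ nontrivial of coprime order and $1\leq j\leq|\mathcal{M}(G)|$. Every family of $j$ distinct maximal subgroups meets in $\Phi(G)$ if and only if every admissible split $j=a+b$ (with $a\leq|\mathcal{M}(G_1)|$ and $b\leq|\mathcal{M}(G_2)|$) has $a\geq\UIN(G_1)$ and $b\geq\UIN(G_2)$. Hence
\[
\UIN(G_1\times G_2)=\max\bigl\{\UIN(G_1)+|\mathcal{M}(G_2)|,\ \UIN(G_2)+|\mathcal{M}(G_1)|\bigr\}.
\]
This equals $\UIN(G_1)+\UIN(G_2)$ exactly when $\UIN(G_i)=|\mathcal{M}(G_i)|$ for $i=1,2$, for example when both factors are cyclic, which is why Corollary~\ref{uid-cyclic} is unaffected. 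Correspondingly, you can sharpen your lower-bound construction: pair a bad $(\UIN(G_1)-1)$-family in $G_1$ with \emph{all} of $\mathcal{M}(G_2)$, rather than with only $\UIN(G_2)$ members.
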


We next show that uniform intersection number of a finite group and its quotient are same under a suitable condition. To proceed we need a basic result.

\begin{lemma}\cite[Theorem $1$]{aiva} \label{lmfrat}
	Let $N$ be a normal subgroup of a finite group $G$ such that $N\subseteq \Phi(G)$. Then $\Phi(G)/N=\Phi\left(G/N\right)$.
\end{lemma}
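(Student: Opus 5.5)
The plan is the standard correspondence-theorem argument, combined with the fact that every maximal subgroup of a finite group contains the Frattini subgroup. Let $\pi\colon G\to G/N$ be the natural projection. We must show $\Phi(G/N)=\Phi(G)/N$, where $N\trianglelefteq G$ and $N\subseteq\Phi(G)$.

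First, since $\Phi(G)=\bigcap_{m\in\mathcal{M}(G)}m$, the hypothesis $N\subseteq\Phi(G)$ gives $N\le m$ for every $m\in\mathcal{M}(G)$.

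Next, apply the correspondence theorem. Subgroups of $G/N$ correspond bijectively and inclusion-preservingly to subgroups of $G$ containing $N$, via $H\mapsto H/N$. Hence maximal subgroups of $G/N$ are exactly the $H/N$ with $H$ maximal among proper subgroups of $G$ containing $N$. Every maximal subgroup of $G$ contains $N$ by the first step, so this set of $H$ is exactly $\mathcal{M}(G)$. Thus $\mathcal{M}(G/N)=\{m/N : m\in\mathcal{M}(G)\}$.

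Finally, compute the intersection. Since $\pi$ restricted to subgroups containing $N$ commutes with intersections, we have $\bigcap_{m}(m/N)=\bigl(\bigcap_m m\bigr)/N$. To check this, note that a coset $gN$ lies in every $m/N$ exactly when $g\in m$ for all $m$, because each $m$ is a union of $N$-cosets. This gives $\Phi(G/N)=\Phi(G)/N$.

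Finiteness guarantees that $\mathcal{M}(G)$ is nonempty when $G\neq 1$, and the trivial case is immediate. The only step needing care is the last one, the identity $\bigcap(m/N)=(\bigcap m)/N$, and it relies precisely on $N\le m$ for all $m$.
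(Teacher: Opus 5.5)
Your proof is correct and complete. The paper gives no argument of its own for this lemma; it simply quotes it from the literature. What you have written is the standard correspondence-theorem proof that the citation stands in for. The hypothesis $N\le\Phi(G)$ forces $N\le m$ for every $m\in\mathcal{M}(G)$, hence $\mathcal{M}(G/N)=\{m/N : m\in\mathcal{M}(G)\}$. The lattice isomorphism $H\mapsto H/N$ on subgroups containing $N$ then preserves intersections, which gives $\Phi(G/N)=\Phi(G)/N$.

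Writing it out makes the paper self-contained at this point. It also shows that finiteness plays no role. With the convention that an empty intersection of maximal subgroups is the whole group, your argument goes through unchanged for an arbitrary group, in particular for the finitely generated groups of Section~\ref{secGg}. It is the same correspondence the paper itself relies on in the subsequent lemma on $\UIN(G/N)$ and in Lemma~\ref{lmqmul}.
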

\begin{lemma}
    Let $N$ be a normal subgroup of a finite group $G$. If $N\subseteq \Phi(G)$, then $\UIN({G}/{N})=\UIN(G)$. 
\end{lemma}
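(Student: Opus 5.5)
The plan is to use the correspondence theorem to set up a bijection between $\mathcal{M}(G)$ and $\mathcal{M}(G/N)$ that preserves intersections. Then Lemma \ref{lmfrat} will identify the Frattini subgroups on both sides.

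First, since $N\subseteq \Phi(G)$, every maximal subgroup $m$ of $G$ contains $N$. By the correspondence theorem, $m\mapsto m/N$ is then a bijection from $\mathcal{M}(G)$ onto $\mathcal{M}(G/N)$. The subgroups of $G/N$ are exactly the $H/N$ with $N\le H\le G$, and inclusion is preserved both ways, so maximality corresponds. The same correspondence gives, for any subgroups $H_1,\dots,H_k$ each containing $N$,
\[
\bigcap_{i=1}^k (H_i/N)=\Bigl(\bigcap_{i=1}^k H_i\Bigr)\Big/N .
\]
This holds because the intersection of the $H_i$ still contains $N$, and a coset $gN$ lies in every $H_i/N$ exactly when $g$ lies in every $H_i$.

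Now fix $k$, and let $m_1,\dots,m_k$ be distinct maximal subgroups of $G$. Their images $m_1/N,\dots,m_k/N$ are distinct maximal subgroups of $G/N$, and every choice of $k$ distinct maximal subgroups of $G/N$ arises in this way. By Lemma \ref{lmfrat} we have $\Phi(G/N)=\Phi(G)/N$. Since both $\bigcap m_i$ and $\Phi(G)$ contain $N$, injectivity of the correspondence gives
\[
\bigcap_{i=1}^k m_i=\Phi(G)\iff \bigcap_{i=1}^k (m_i/N)=\Phi(G/N).
\]
So the statement "every $k$ distinct maximal subgroups intersect in the Frattini subgroup" holds for $G$ if and only if it holds for $G/N$. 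The smallest such $k$ is therefore the same for both groups, which gives $\UIN(G/N)=\UIN(G)$.

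No step is really an obstacle. The only points needing care are that every maximal subgroup contains $N$, which is immediate from $N\subseteq\Phi(G)$, and that distinctness of the $m_i$ is preserved under the correspondence, which follows from bijectivity.
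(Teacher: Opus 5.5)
Your proof is correct and follows the same route as the paper: the correspondence $m\mapsto m/N$ between $\mathcal{M}(G)$ and $\mathcal{M}(G/N)$, the identity $\bigcap_i (m_i/N)=(\bigcap_i m_i)/N$, and Lemma~\ref{lmfrat} to identify $\Phi(G/N)$ with $\Phi(G)/N$. Your version proves the equivalence for every $k$ at once, which is slightly cleaner than the paper's argument: the paper checks only the values $k$ and $k-1$, and so tacitly relies on the defining property being monotone in $k$.
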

\begin{proof}
   Let $\UIN(G)=k$. Since $N\subseteq \Phi(G)$, there is a one to one correspondence between the set of maximal subgroups of $G$ and ${G}/{N}$. Using Lemma \ref{lmfrat}, we have \[\bigcap_{i=1}^k\left({m_i}/{N}\right)={\bigcap_{i=1}^km_i}/{N}={\Phi(G)}/{N}=\Phi({G}/{N})\]
   for any distinct $k$-maximal subgroups $m_1,\ldots,m_k$ of $G$.  Also note that there must exist $m_1$, $m_2, \dots , m_{k-1}\in \mathcal{M}(G)$ such that $\bigcap_{i=1}^{k-1}\left({m_i}/{N}\right)\not=\Phi({G}/{N})$, otherwise $\UIN(G)<k$, which contradicts our hypothesis. Hence, the result follows.
\end{proof}
We next determine the uniform intersection numbers of several non-nilpotent groups including, dicyclic, dihedral, and quasi-dihedral groups. The dicyclic group of order $4n$, denoted by $\dic_n$, is defined as 
\begin{equation*}
    \dic_n=\langle a,x: a^{2n}=1, ~x^2=a^n,~ x^{-1}ax=a^{-1}\rangle. 
\end{equation*}
Let $n=\prod_{i=1}^kp_i^{t_i}$. By \cite[Theorem 5.1]{hum}, $\IN(\dic_n)=k+1$. The following result determines the uniform intersection number of $\dic_n$.

\begin{theorem}\label{thmuidic}
Let $n=\prod_{i=1}^kp_i^{t_i}$ be with $p_1\geq p_2\geq \dots \geq p_k$. Then
 \[
\UIN(\dic_n)=\begin{cases}2~ &\text{if}~ k=1~\\ \sum\limits_{i=1}^{k-1}p_i +2~ &\text{if}~ k\geq 2~~
\end{cases}\]
\end{theorem}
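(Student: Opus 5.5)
The plan is to list all maximal subgroups of $\dic_n$ explicitly, then turn the condition ``$\bigcap m_i=\Phi(\dic_n)$'' into a combinatorial condition on which maximal subgroups are chosen. Finally, find the largest family that fails this condition; $\UIN(\dic_n)$ is one more than that size.

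\textbf{Step 1: the maximal subgroups.} Every element of $\dic_n$ is $a^r$ or $a^jx$, and $(a^jx)^2=a^n$. I claim the maximal subgroups are exactly the following.
\begin{itemize}
\item The cyclic subgroup $\langle a\rangle$, of index $2$.
\item For each prime $p_i$ and each $j\in\{0,\dots,p_i-1\}$, the subgroup $M_{i,j}=\langle a^{p_i},a^jx\rangle$, of index $p_i$.
\end{itemize}
Since $p_i\mid n$, we have $a^n\in\langle a^{p_i}\rangle$, so $M_{i,j}=\langle a^{p_i}\rangle\cup a^j\langle a^{p_i}\rangle x$ has order $4n/p_i$.

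To see there are no others, note that a proper subgroup not inside $\langle a\rangle$ has the form $\langle a^d, a^jx\rangle$ with $d\mid 2n$. The relation $a^n\in\langle a^d\rangle$ forces $d\mid n$, so such a subgroup lies in some $M_{i,j}$. When $n$ is odd, $\langle a^2,x\rangle=\dic_n$, so the prime $2$ contributes nothing extra, which is consistent with the list.

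\textbf{Step 2: the intersection criterion.} Intersecting all the maximal subgroups gives $\Phi(\dic_n)=\langle a^{p_1\cdots p_k}\rangle$. For a family $\mathcal F$ of maximal subgroups, let $P(\mathcal F)$ be the set of primes $p_i$ for which some $M_{i,j}\in\mathcal F$.
\begin{itemize}
\item The rotation part of $\bigcap\mathcal F$ is $\langle a^{\prod_{p\in P(\mathcal F)}p}\rangle$.
\item The intersection contains an element $a^jx$ if and only if $\langle a\rangle\notin\mathcal F$ and, for each $i$, all chosen $M_{i,j}$ share a common residue $j\bmod p_i$. The ``if'' direction uses the Chinese remainder theorem to pick one $j$ compatible with all primes.
\end{itemize}
Hence $\bigcap\mathcal F=\Phi(\dic_n)$ if and only if both of the following hold:
\begin{enumerate}[(i)]
\item $P(\mathcal F)=\{p_1,\dots,p_k\}$;
\item either $\langle a\rangle\in\mathcal F$, or two distinct $M_{i,j},M_{i,j'}$ with the same $i$ belong to $\mathcal F$.
\end{enumerate}

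\textbf{Step 3: the extremal count.} When $k=1$, any two distinct maximal subgroups satisfy (i) and (ii), and a single maximal subgroup is strictly bigger than $\Phi$. So $\UIN=2$.

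When $k\ge 2$, a family fails either by violating (i) or by violating (ii).
\begin{itemize}
\item Violating (i) means some prime $p_\ell$ is missing. The largest such family is everything except the $M_{\ell,j}$, of size $1+\sum_{i\ne\ell}p_i$. This is maximized by omitting the smallest prime $p_k$, giving $1+\sum_{i=1}^{k-1}p_i$.
\item Violating (ii) means $\langle a\rangle$ is absent and at most one $M_{i,j}$ is chosen per prime. Such a family has size at most $k$, which is at most $1+\sum_{i<k}p_i$.
\end{itemize}
So the largest failing family has size exactly $\sum_{i=1}^{k-1}p_i+1$, and every larger family satisfies both conditions. This yields $\UIN(\dic_n)=\sum_{i=1}^{k-1}p_i+2$.

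\textbf{Main obstacle.} I expect the delicate step to be Step 1: proving the list of maximal subgroups is complete, and handling the prime $2$ correctly. For $n$ even, $\langle a^2,a^jx\rangle$ has index $2$ and is genuinely maximal. For $n$ odd it is the whole group because $x^2=a^n$. I would settle this with the normal-form argument above, checking that $a^n\in\langle a^d\rangle$ forces $d\mid n$.
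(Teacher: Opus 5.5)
Your proof is correct and follows essentially the same route as the paper. The family consisting of $\langle a\rangle$ together with all $\langle a^{p_i},a^jx\rangle$ for $i\le k-1$ gives the lower bound, and a counting argument shows that any $\sum_{i=1}^{k-1}p_i+2$ maximal subgroups meet every prime class. Your explicit condition (ii) supplies a step the paper passes over when it concludes from ``at least one subgroup from each class'' alone: it uses $\langle a\rangle$, or two subgroups from one class, to exclude the elements $a^jx$ from the intersection. You also derive the list of maximal subgroups rather than citing it.
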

\begin{proof}
  By \cite[Theorem 5.1]{hum}, the maximal subgroups of the group $\dic_n$ are of the form $\langle a\rangle$ and $\langle a^{p_j}, a^ix\rangle$, where $j\in [k]$ and $i\in \{0,1,\dots, p_j-1\}$. Thus $\Phi(\dic_n)=\langle a^{p_1\dots p_k}\rangle$ and $|\mathcal{M}(\dic_n)|=\sum_{i=1}^kp_i+1$. For each \(t\in[k]\), let \(C_{p_t}\) denote the conjugacy class of maximal subgroups of the form 
  \[
  \langle a^{p_t},a^ix\rangle,
  \qquad 0\leq i\leq p_t-1.
  \]
Clearly, $|C_{p_t}|=p_t$. If $k=1$, then the intersection of any two distinct maximal subgroups of $\dic_n$ is equal to the $\Phi(\dic_n)$ and hence $\UIN(\dic_n)=2$.

 Let $k\geq 2$. Then 
  \begin{equation*}
  	\left(\bigcap_{i=1}^{k-1}C_{p_i}\right)\cap \langle a\rangle=\langle a^{p_1p_2\cdots p_{k-1}} \rangle\not=\Phi(\dic_n)=\langle a^{p_1p_2\cdots p_k}\rangle.
  \end{equation*}
  Therefore, $\UIN(Dic_n)>\sum_{i=1}^{k-1}p_i+1$. Now, consider any collection of $\sum_{i=1}^{k-1}p_i+2$ maximal subgroups of $\dic_n$. Then this collection must contain at least one maximal subgroup from each conjugacy class. Hence, the intersection of any $\sum_{i=1}^{k-1}p_i+2$ distinct maximal subgroups of $\dic_n$ is $\Phi(G)$. 
  Therefore, $\UIN(\dic_n)=\sum_{i=1}^{k-1}p_i+2$.
\end{proof}

We now investigate the uniform intersection number of the dihedral group. The dihedral group of order $2n$, denoted by $\dih_{2n}$, is defined as 
\begin{equation*}
    \dih_{2n}=\langle r,s: r^n=s^2=1, srs=r^{-1}\rangle. 
\end{equation*}
In \cite[Theorem 5.4]{hum}, it was shown that $\IN(\dih_{2n})=k+1$.  In the following result, we determine the uniform intersection number of $\dih_{2n}$.
\begin{theorem}
 Let $n=\prod_{i=1}^kp_i^{t_i}$ be with $p_1\geq p_2\geq \dots \geq p_k$. Then
 \begin{align*}
\UIN(\dih_{2n})=\begin{cases}2~ &\text{if}~ k=1~\\ \sum\limits_{i=1}^{k-1}p_i+2~ &\text{if}~ k\geq 2~~
\end{cases}
\end{align*} 
\end{theorem}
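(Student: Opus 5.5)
The plan is to mirror the proof of Theorem~\ref{thmuidic}, taking more care with the reflections. First I will record the maximal subgroups of $\dih_{2n}$, which are available from the same source \cite[Theorem 5.4]{hum}: the cyclic subgroup $\langle r\rangle$ of index $2$, and for each $j\in[k]$ the $p_j$ subgroups $\langle r^{p_j}, r^is\rangle$ with $0\le i\le p_j-1$. Call the latter family $C_{p_j}$, so that $|C_{p_j}|=p_j$ and $|\mathcal{M}(\dih_{2n})|=\sum_{i=1}^k p_i+1$.

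Next I will establish a membership criterion. A reflection $r^{i'}s$ lies in $\langle r^{p_j}, r^is\rangle$ if and only if $i'\equiv i \pmod{p_j}$. Consequently, two distinct members of the same class $C_{p_j}$ intersect in $\langle r^{p_j}\rangle$, which contains no reflection. Also, $\langle r\rangle\cap\langle r^{p_j},r^is\rangle=\langle r^{p_j}\rangle$. Intersecting all maximal subgroups then gives $\Phi(\dih_{2n})=\langle r^{p_1\cdots p_k}\rangle$.

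For $k=1$, this criterion shows directly that any two distinct maximal subgroups intersect in $\langle r^{p_1}\rangle=\Phi$. Since no single proper maximal subgroup equals $\Phi$, we get $\UIN=2$.

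For $k\ge 2$ I will prove two bounds.

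\emph{Lower bound.} Take all members of $C_{p_1},\dots,C_{p_{k-1}}$ together with $\langle r\rangle$. These are $\sum_{i=1}^{k-1}p_i+1$ distinct maximal subgroups. Their intersection is $\langle r^{p_1\cdots p_{k-1}}\rangle$, which strictly contains $\Phi$ because $p_k\mid n$. Hence $\UIN>\sum_{i=1}^{k-1}p_i+1$.

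\emph{Upper bound.} Take any $\sum_{i=1}^{k-1}p_i+2$ distinct maximal subgroups. Their complement in $\mathcal{M}(\dih_{2n})$ has size $p_k-1$. Since $p_k-1<p_t$ for every $t$, the collection meets every class $C_{p_t}$. The rotations in the intersection are therefore exactly $\bigcap_t\langle r^{p_t}\rangle=\langle r^{p_1\cdots p_k}\rangle=\Phi$.

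It remains to show that no reflection survives, and this is the main obstacle. The counting argument of Theorem~\ref{thmuidic} is not enough here. If exactly one subgroup $\langle r^{p_t},r^{i_t}s\rangle$ is chosen from each class, the Chinese remainder theorem produces $i'$ with $i'\equiv i_t \pmod{p_t}$ for all $t$. Then $r^{i'}s$ lies in the intersection, which would equal $\langle r^{\mathrm{rad}(n)}, r^{i'}s\rangle\neq\Phi$. So I must show that either $\langle r\rangle$ belongs to the collection, or some class contributes at least two members; in either case the membership criterion kills all reflections.

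Suppose $\langle r\rangle$ is omitted. Then only $p_k-2$ further subgroups are omitted. Because the primes are distinct and $k\ge2$, we have $p_1>p_k\ge 2$, so $p_1\ge 3$. Hence at least $p_1-(p_k-2)\ge 2$ members of $C_{p_1}$ are chosen. Therefore the intersection is exactly $\Phi$, and $\UIN(\dih_{2n})=\sum_{i=1}^{k-1}p_i+2$.
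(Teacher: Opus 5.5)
Your proof is correct and follows the paper's route. The paper simply invokes the argument of Theorem~\ref{thmuidic}: the lower bound comes from $\langle r\rangle$ together with all of $C_{p_1},\dots,C_{p_{k-1}}$, and the upper bound comes from the fact that a collection of $\sum_{i=1}^{k-1}p_i+2$ maximal subgroups omits only $p_k-1$ of them. Your additional step (if $\langle r\rangle$ is omitted, then at least two members of $C_{p_1}$ remain, so no reflection survives) supplies exactly what the paper's ``at least one from each class, hence $\Phi$'' reasoning leaves unjustified. Your Chinese-remainder observation correctly shows that meeting every class alone would not suffice.
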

\begin{proof}
We first assume that $k=1$. By \cite[Theorem 5.3]{hum}, the maximal subgroups of $\dih_{2n}$ are precisely $\langle r \rangle$ and $\langle r^{p_1},sr^j\rangle$, where $j\in \{0,1,2\dots,{p_1}-1\}$. Then $\Phi(\dih_{2n})=\langle r^{p_1}\rangle$ and the intersection of two distinct maximal subgroups of $\dih_{2p_1^{t_1}}$ is equal to $\langle r^{p_1}\rangle$. Thus $\UIN(\dih_{2n})=\IN(\dih_{2n})=2$.

We now prove the assertion for $k\geq 2$. By \cite[Theorem 5.4]{hum},  the maximal subgroups of $\dih_{2n}$ are of the form $\langle r \rangle$ and $\langle r^{p_i}, sr^j\rangle $, where $i\in [k]$  and $j\in \{0,1, \dots, p_{i-1}\}$. Thus $\Phi(\dih_{2n})=\langle r^{p_1p_2\cdots p_k}\rangle$. Let $C_{p_0}=\{\langle r \rangle\}$ and, for each $i\in [k]$, $C_{p_i}=\{\langle r^{p_i},sr^j\rangle: j\in [p_i-1]\}$. Then, by an argument analogous to that used in Theorem Theorem \ref{thmuidic}, we have $\UIN(\dih_{2n})=\sum_{i=1}^{k-1}p_i+2$.
\end{proof}

We next determine the uniform intersection number of the quasi-dihedral group and show that it coincides with its intersection number. Recall that, for \(n\geq 4\), the quasi-dihedral group \(   \qdic_n\) is defined by
\begin{equation*}
    \qdic_n=\langle r,s:r^{2^{n-1}}=s^2=1, srs=r^{2^{n-2}-1}\rangle.
\end{equation*}
\begin{theorem}
    Let $n\geq 4$ be an integer. Then $\UIN(   \qdic_n)=\IN(\qdic_n)=2$.
\end{theorem}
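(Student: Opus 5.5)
The plan is to use that $\qdic_n$ is a $2$-group of order $2^n$ generated by two elements, $r$ and $s$. It is non-cyclic, since its exponent is $2^{n-1}<2^n$.

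First I will identify $\Phi(\qdic_n)$. For a finite $p$-group $P$, Burnside's basis theorem gives $\Phi(P)=P^pP'$ and $P/\Phi(P)\cong(\mathbb{Z}_p)^d$, where $d$ is the minimal number of generators. In $\qdic_n$ we have $srs^{-1}r^{-1}=r^{2^{n-2}-2}$, which generates $\langle r^2\rangle$ because $2^{n-2}-2=2(2^{n-3}-1)$ with $2^{n-3}-1$ odd (here $n\geq 4$). Squares of elements $r^is$ lie in $\langle r\rangle$. So $\Phi(\qdic_n)=\langle r^2\rangle$, and $\qdic_n/\Phi(\qdic_n)\cong\mathbb{Z}_2\times\mathbb{Z}_2$. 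This quotient has exactly three maximal subgroups.

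Next, every maximal subgroup contains $\Phi$, so maximal subgroups of $\qdic_n$ correspond bijectively to those of $\mathbb{Z}_2\times\mathbb{Z}_2$. They are therefore exactly three index-$2$ subgroups: $\langle r\rangle$, $\langle r^2,s\rangle$ and $\langle r^2,rs\rangle$. I will note that the intersection of any two distinct maximal subgroups of $\mathbb{Z}_2\times\mathbb{Z}_2$ is trivial. Pulling this back through the correspondence, the intersection of any two distinct maximal subgroups of $\qdic_n$ is exactly $\langle r^2\rangle=\Phi(\qdic_n)$. As a direct check, two distinct index-$2$ subgroups meet in a subgroup of index $4$ containing $\Phi$, and $\Phi$ itself has index $4$. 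Hence $\UIN(\qdic_n)\leq 2$.

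Finally, for the lower bound, no single maximal subgroup equals $\Phi$, since $\Phi$ has index $4$. So $\IN(\qdic_n)\geq 2$, and combined with $\IN\leq\UIN\leq 2$ this gives equality. The only real obstacle is the commutator computation showing $\Phi=\langle r^2\rangle$. This needs $n\geq 4$, so that the exponent $2^{n-2}-2$ has $2$-adic valuation exactly $1$.
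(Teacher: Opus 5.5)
Your proof is correct and follows essentially the same route as the paper: identify $\Phi(\qdic_n)$ as the index-$4$ subgroup $\langle r^2\rangle$, note there are exactly three maximal subgroups, and observe that two distinct index-$2$ subgroups meet in an index-$4$ subgroup containing $\Phi$, hence in $\Phi$ itself; where the paper simply quotes the list of maximal subgroups and the Frattini subgroup, you derive $\Phi=\langle r^2\rangle$ via Burnside's basis theorem. One small fix: what you need for $\Phi\subseteq\langle r^2\rangle$ is $(r^is)^2=r^{i2^{n-2}}\in\langle r^2\rangle$, not merely $\in\langle r\rangle$, although your observation that $\qdic_n$ is $2$-generated and non-cyclic already forces $|\qdic_n:\Phi|=4$, which closes this anyway.
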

\begin{proof}
Note that \( \qdic_n\) is a group of order $2^{n}$ and the maximal subgroups of \( qdic_n\) are 
consist of exactly three isomorphism classes:
\begin{enumerate}[{(i)}]
    \item  The cyclic group $\mathbb{Z}_{2^{n-1}}$,
\item The dihedral group $\dih_{2^{n-1}}$,
\item The generalized quaternion group $\operatorname{Q}_{2^{n-1}}$. 
\end{enumerate}
Thus, every maximal subgroup of \(\qdic_n\) has order \(2^{n-1}\), and $\Phi(   \qdic_n)=\mathbb{Z}_{2^{n-2}}$. Moreover, for any two distinct maximal subgroups \(m_1,m_2\in\mathcal{M}(\qdic_n)\), $|m_1\cap m_2|=2^{n-2}=|\Phi(\qdic_n)|$.  Consequently, $\UIN(\qdic_n)=\IN(\qdic_n)=2$.
\end{proof}


We now show that for every finite cyclic group \(G\), the intersection number and the uniform intersection number coincide.
\begin{corollary}\label{uid-cyclic}
	Let $G$ be a nontrivial cyclic group of order $n=\prod_{i=1}^kp_i^{t_i}$. Then $\IN(G)=\UIN(G)=k$.
\end{corollary}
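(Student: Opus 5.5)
The plan is to describe the maximal subgroups of $G=\mathbb{Z}_n$ explicitly and then read off both invariants. Write $G=\langle g\rangle$ with $|g|=n=\prod_{i=1}^k p_i^{t_i}$. Subgroups of a cyclic group correspond bijectively to divisors of $n$, via $d\mapsto\langle g^{d}\rangle$, and inclusion $\langle g^{d}\rangle\leq\langle g^{e}\rangle$ holds exactly when $e\mid d$. Hence the maximal subgroups are precisely $m_i=\langle g^{p_i}\rangle$ for $i\in[k]$, so $|\mathcal{M}(G)|=k$. Intersections correspond to least common multiples of the indices. Since the $p_i$ are distinct, this gives
\[
\Phi(G)=\bigcap_{i=1}^k m_i=\langle g^{p_1p_2\cdots p_k}\rangle .
\]

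For the uniform intersection number, I would use the bound $\IN(G)\leq\UIN(G)\leq|\mathcal{M}(G)|=k$ noted after the definition. So it suffices to show $\IN(G)\geq k$. Take any proper subfamily $\{m_i : i\in I\}$ with $I\subsetneq[k]$. Its intersection is $\langle g^{\prod_{i\in I}p_i}\rangle$, whose index in $G$ is $\prod_{i\in I}p_i$. This is strictly smaller than $p_1\cdots p_k=[G:\Phi(G)]$, so the intersection is strictly larger than $\Phi(G)$. Therefore no fewer than $k$ maximal subgroups intersect to $\Phi(G)$, giving $\IN(G)=k$ and hence $\UIN(G)=k$.

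Alternatively, one could write $G\cong\prod_i\mathbb{Z}_{p_i^{t_i}}$ and apply Corollary~\ref{corabliui} inductively. This reduces the claim to cyclic $p$-groups, which have a unique maximal subgroup, so $\IN=\UIN=1$ there. I would present the direct divisor argument as the main proof and mention this as a check.

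There is no serious obstacle. The only point needing care is the convention when $k=1$: the unique maximal subgroup equals $\Phi(G)$, so $\IN(G)=\UIN(G)=1$, which is consistent with the formula.
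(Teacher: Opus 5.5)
Your proof is correct, and it ends the same way as the paper's: both conclude with the squeeze $k=\IN(G)\le\UIN(G)\le|\mathcal{M}(G)|=k$. The difference is in how $\IN(G)=k$ is established.

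The paper writes $G\cong\prod_{i}\mathbb{Z}_{p_i^{t_i}}$ and notes that $\IN(\mathbb{Z}_{p_j^{t_j}})=1$. It then invokes an external additivity theorem on intersection numbers of such direct products to get $\IN(G)=k$. It uses $|\mathcal{M}(G)|=k$ without comment.

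You instead work directly in the divisor lattice. You identify $\mathcal{M}(G)=\{\langle g^{p_i}\rangle : i\in[k]\}$ and $\Phi(G)=\langle g^{p_1\cdots p_k}\rangle$. You then observe that any proper subfamily indexed by $I\subsetneq[k]$ intersects in a subgroup of index $\prod_{i\in I}p_i<[G:\Phi(G)]$.

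Your route is self-contained and also justifies $|\mathcal{M}(G)|=k$ and the form of $\Phi(G)$. The paper's route places the result inside the coprime direct-product framework (compare Corollary~\ref{corabliui}), which is what generalizes beyond the cyclic case. Your suggested cross-check via Corollary~\ref{corabliui} is essentially the paper's argument.
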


\begin{proof}
	By the Fundamental theorem of an abelian Group,
	\begin{align*}
		G\cong \mathbb{Z}_{{p_1}^{t_1}}\times \mathbb{Z}_{{p_2}^{t_2}}\times \cdots \times \mathbb{Z}_{{p_k}^{t_k}}.
	\end{align*}
	Since $\IN(\mathbb{Z}_{{p_j}^{t_j}})=1$ for each $j\in [k]$, by \cite[Theorem 4.7]{hum}, we have $\IN(G)=k$. Also, $k=\IN(G)\leq \UIN(G)\leq \vert \mathcal{M}(G)\vert=k$. Consequently, $\UIN(G)=k$.
\end{proof}

In Theorem \ref{thmcmg}, we characterize all finite groups $G$ for which $\IN(G)=\UIN(G)=2$, while the preceding proposition provides, for each \(k\), an example of a finite group \(G\) satisfying $\IN(G)=\UIN(G)=k$. However, a complete classification of all finite groups \(G\) such that  $\IN(G)=\UIN(G)=k>2$ appears to be an interesting and challenging problem. In the final result of this section, we give a necessary and sufficient condition for this equality in terms of the maximal intersection poset of a finite group. To this end, we first recall the following definitions.

 Let $M$ be a non empty subset of $\mathcal{M}(G)$. Then $M$ is irredundant if the intersection of the subgroups in $M$ is not equal to the intersection of any proper subset. The minimal dimension of $G$, denoted as $\operatorname{Mindim}(G)$, is the minimal size of a maximal irredundant set of $G$.
 \begin{proposition}
 	Let $G$ be a finite group with $|\mathcal{M}(G)|=m$ such that $\operatorname{Mindim}(G)=\IN(G)$. Then $\IN(G)=\UIN(G)=k$ if and only if $|\mathcal{S}_P(G)|=\sum\limits_{i=1}^{k-1}\left(^m_i \right)+2$, where $\mathcal{S_P(G)}$ is the maximal intersection poset of $G$.
 \end{proposition}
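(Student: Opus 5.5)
Let $\mathcal{M}(G)=\{m_1,\dots,m_m\}$. The plan is to count $\mathcal{S}_P(G)$ by organising its elements according to which subsets of $\mathcal{M}(G)$ produce them. Every element of $\mathcal{S}_P(G)$ other than $G$ is of the form $m_I:=\bigcap_{i\in I}m_i$ for some nonempty $I\subseteq[m]$. Write $k=\IN(G)$.

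\textbf{Step 1: for $|I|<k$, the subset $I$ is irredundant.} Take $I$ with $|I|\le k-1$. If $I$ were redundant, we could remove maximal subgroups from $I$ one at a time without changing $m_I$. This ends at an irredundant subset $J\subseteq I$ with $m_J=m_I$. The set $J$ extends to a maximal irredundant set, and by hypothesis every maximal irredundant set has size at least $\operatorname{Mindim}(G)=\IN(G)=k$.

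The intended conclusion is that every $I$ with $|I|\le k-1$ is irredundant and $m_I\neq\Phi(G)$. The second part is immediate, since $|I|<\IN(G)$. For the first part, the hypothesis $\operatorname{Mindim}=\IN$ is used as follows. An irredundant $J$ with $m_J\ne\Phi(G)$ can be extended by adding $m_j\not\supseteq m_J$. Iterating gives a maximal irredundant set, which is exactly a set with intersection $\Phi(G)$ and no redundancy, of size at least $k$.

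\textbf{Step 2: distinct small subsets give distinct subgroups.} The aim is to show that $I\mapsto m_I$ is injective on $\{I:1\le|I|\le k-1\}$. Suppose $m_I=m_{I'}$ with $I\ne I'$, and pick $j\in I'\setminus I$. Then $m_{I\cup\{j\}}=m_I$, so $I\cup\{j\}$ is redundant. If $|I\cup\{j\}|\le k-1$, this contradicts Step 1. If $|I|=|I'|=k-1$, there is a gap: Step 1 only covers sets of size at most $k-1$. This is the point I expect to be the main obstacle.

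To close the gap, I would try to make the bookkeeping entirely one-directional instead of proving full injectivity:
\begin{itemize}
\item show that $\IN(G)=\UIN(G)=k$ forces every $k$-subset to have intersection exactly $\Phi(G)$;
\item show that every $j$-subset with $j<k$ has intersection strictly larger than $\Phi(G)$;
\item combine this with the irredundancy from Step 1 to show the sets $m_I$, for $|I|\le k-1$, are pairwise distinct.
\end{itemize}
This comparison of intersections of sets of size at most $k-1$ is where the Mindim hypothesis must carry the weight.

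\textbf{Step 3: the count for the forward direction.} Granting injectivity, assume $\UIN(G)=k$. Then every intersection of at least $k$ members of $\mathcal{M}(G)$ equals $\Phi(G)$. So $\mathcal{S}_P(G)$ consists of three kinds of elements:
\begin{itemize}
\item $G$ itself;
\item $\Phi(G)$;
\item the distinct subgroups $m_I$ with $1\le|I|\le k-1$, none of which equals $\Phi(G)$.
\end{itemize}
This gives
\[
|\mathcal{S}_P(G)|=\sum_{i=1}^{k-1}\binom{m}{i}+2 .
\]

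\textbf{Step 4: the converse.} Suppose $|\mathcal{S}_P(G)|=\sum_{i=1}^{k-1}\binom{m}{i}+2$. Since $k=\IN(G)$, the injectivity argument again produces at least this many elements: $G$, $\Phi(G)$, and the pairwise distinct $m_I$ with $|I|\le k-1$. Equality then means there is no further element.

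Now suppose $\UIN(G)>k$. Then some $k$-subset $I$ has $m_I\supsetneq\Phi(G)$. Any such $m_I$ is a new element, unless it coincides with some $m_{I'}$ for $|I'|<k$. A coincidence would make $I\cup I'$ redundant, and I would rule this out by the same irredundance argument as in Step 1. So $m_I$ is a new element, contradicting the count, and hence $\UIN(G)=k$.
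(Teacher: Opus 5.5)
\textbf{There is a genuine gap: the converse cannot be closed, and the forward direction needs a different ingredient.}

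\textbf{Forward direction.} Your Steps 1 and 2 try to get injectivity of $I\mapsto m_I$ on $\{1\le |I|\le k-1\}$ from $\operatorname{Mindim}(G)=\IN(G)$, and this cannot work. Step 1 never reaches a contradiction. An irredundant $J$ with $|J|<k$ lying inside a maximal irredundant family of size $\ge k$ is perfectly consistent. In fact both of your claims are false under the stated hypotheses. In $\mathbb{Z}_p^r$, irredundant families of maximal subgroups correspond to linearly independent functionals, so $\operatorname{Mindim}=\IN=r$. Yet for $r=4$ the set $\{\ker f_1,\ker f_2,\ker(f_1+f_2)\}$ is redundant of size $3=k-1$. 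For $r=3$ one has $\ker f_1\cap\ker f_2=\ker f_1\cap\ker(f_1+f_2)$, which is exactly your flagged case $|I|=|I'|=k-1$.

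The missing idea is to use $\UIN(G)=k$ rather than Mindim. Suppose $m_I\subseteq m_j$ with $|I|\le k-1$ and $j\notin I$. Enlarge $I$ to a $(k-1)$-set $I^*$ with $j\notin I^*$, which is possible since $m\ge k$. Then $m_{I^*}=m_{I^*\cup\{j\}}=\Phi(G)$, contradicting $\IN(G)=k$. Apply this with $j\in I'\setminus I$ (or $j\in I\setminus I'$) to get injectivity; $\IN(G)=k$ gives $m_I\neq\Phi(G)$. The count then follows with no Mindim hypothesis at all. This is what the paper's one-line ``Consequently'' leaves implicit.

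\textbf{Converse.} You fix $k=\IN(G)$ at the outset. In the statement, however, $k$ is determined only by the count, so $\IN(G)=k$ has to be proved; the paper does this after asserting $\UIN(G)=k$. Your Step 4 also rests on the false injectivity. For $\mathbb{Z}_p^3$ with $k=\IN=3$ one has $|\mathcal{S}_P|=2m+2<\binom{m}{1}+\binom{m}{2}+2$, so ``at least this many elements'' fails, and your coincidence argument reuses Step 1.

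More seriously, the ``if'' direction as stated is false, so no argument can close it. Take $G=\mathbb{Z}_6\times\mathbb{Z}_6\cong\mathbb{Z}_2^2\times\mathbb{Z}_3^2$.
\begin{itemize}
\item It has $m=7$ maximal subgroups (three of index $2$, four of index $3$) and $\Phi(G)=1$.
\item $\mathcal{S}_P(G)=\{X\times Y: X\in\mathcal{S}_P(\mathbb{Z}_2^2),\ Y\in\mathcal{S}_P(\mathbb{Z}_3^2)\}$, so $|\mathcal{S}_P(G)|=5\cdot 6=30=\binom{7}{1}+\binom{7}{2}+2$.
\item A family of maximal subgroups is irredundant iff it has at most two members of each index, and maximal irredundant iff it has exactly two of each. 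Hence $\operatorname{Mindim}(G)=\IN(G)=4$ and the hypothesis holds.
\end{itemize}
The count matches $k=3$, yet $\IN(G)=4$ and $\UIN(G)=6$: the four index-$3$ subgroups together with one index-$2$ subgroup meet in a subgroup of order $2$. The paper's unargued step ``Then $\UIN(G)=k$'' is exactly where this breaks.

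Under your reading with $k=\IN(G)$ fixed, this group is not a counterexample, since $\binom{7}{1}+\binom{7}{2}+\binom{7}{3}+2=65$. Your argument does not establish that weaker version either.
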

 \begin{proof}
 	Suppose $\IN(G)=\UIN(G)=k$. Then, for every $k$-element subset $\Lambda$ of  $\mathcal{M}(G)$, $\bigcap\limits_{m_i\in  \Lambda}m_i=\Phi(G)$. Consequently, $|\mathcal{S}_P(G)|=\sum_{i=1}^{k-1}\left(^m_i \right)+2$.
 	
 	Conversely, suppose $|\mathcal{S}_P(G)|=\sum_{i=1}^{k-1}\left(^m_i \right)+2$. Then $\UIN(G)=k$. It remains to show let $\IN(G)=k$. If not, then there exist maximal subgroups $m_1$, $m_2$,$\dots$, $m_t$ of $G$ such that $\bigcap\limits_{i=1}^tm_i=\Phi(G)$, where $t\leq k-1$. This implies $|\mathcal{S}_P(G)|<\sum_{i=1}^{k-1}\left(^m_i \right)+2$,  a contradiction. Thus $\IN(G)=\UIN(G)=k$. 
 \end{proof}

\section{Structure of subgroup-based generating graph}\label{secGg}

In this section, we study the subgroup-based generating graphs of finitely generated groups and prove our main results. We begin by investigating the connectedness and diameter of the subgroup-based generating graph of a  finitely generated group. We show that the diameter of this graph is at most \(3\) and explicitly determine its diameter for free abelian groups and finite abelian groups. In addition, we establish a relationship between the diameter of the generating graph of a finite group and its uniform intersection number. The following lemma will be useful in our study of connectedness and diameter.

\begin{lemma}\label{lmiff}
   Let $G$ be a finitely generated group. Then the two vertices $S_1$ and $S_2$ in $\mathcal{G}(G)$ are adjacent if and only if $M(S_1)\cap M(S_2)=\emptyset$.
\end{lemma}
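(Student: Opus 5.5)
The plan is to prove both directions by contraposition. The one substantive input is the standard fact that in a finitely generated group every proper subgroup lies in some maximal subgroup. Throughout, $S_1$ and $S_2$ are vertices of $\mathcal{G}(G)$, so they are proper subgroups and $M(S_1)$, $M(S_2)$ are defined.

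For the forward direction, assume $\langle S_1,S_2\rangle=G$ and suppose some $m\in M(S_1)\cap M(S_2)$ exists. Then $S_1\leq m$ and $S_2\leq m$, so $G=\langle S_1,S_2\rangle\leq m$. This contradicts the properness of the maximal subgroup $m$. Hence $M(S_1)\cap M(S_2)=\emptyset$, and this direction needs nothing about finite generation.

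For the converse, assume $M(S_1)\cap M(S_2)=\emptyset$ and suppose $H:=\langle S_1,S_2\rangle\neq G$. Then $H$ is a proper subgroup of $G$, and I will show it is contained in a maximal subgroup $m$ of $G$. Once that is done, $S_1\leq H\leq m$ and $S_2\leq H\leq m$ give $m\in M(S_1)\cap M(S_2)$, a contradiction, so $\langle S_1,S_2\rangle=G$ and the two vertices are adjacent.

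The only real step is the existence of a maximal subgroup containing $H$; this is where finite generation is used. Write $G=\langle g_1,\dots,g_r\rangle$ and consider the poset of proper subgroups of $G$ containing $H$, ordered by inclusion. This poset is nonempty because it contains $H$. Let $\{K_\alpha\}$ be a chain in it. The union $K=\bigcup K_\alpha$ is a subgroup containing $H$. If $K=G$, then each of the finitely many generators $g_i$ lies in some $K_{\alpha_i}$. Since these form a chain, all of $g_1,\dots,g_r$ lie in the largest of the $K_{\alpha_i}$, which would then equal $G$; this is impossible. So $K$ is a proper upper bound for the chain. Zorn's lemma then gives a maximal element of the poset, which is a maximal subgroup of $G$ containing $H$. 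For finite $G$ this reduces to the trivial observation that a proper subgroup of largest order containing $H$ works.
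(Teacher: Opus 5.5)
Your proof is correct and takes essentially the same approach as the paper. One direction observes that a common maximal subgroup $m$ would contain $\langle S_1,S_2\rangle$. The other uses finite generation (via Zorn's lemma) to place the proper subgroup $\langle S_1,S_2\rangle$ inside some maximal subgroup; the paper phrases this as a contrapositive about adjacency in $\overline{\mathcal{G}(G)}$ and only cites Zorn's lemma, whereas you write out the chain argument explicitly.
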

\begin{proof}
	We prove the equivalent contrapositive statement:  the two vertices $S_1$ and $S_2$ in $\overline{\mathcal{G}(G)}$ are adjacent if and only if $M(S_1)\cap M(S_2)\not=\emptyset$.\smallskip
	
  Let $S_1\sim S_2$ in $\overline{\mathcal{G}(G)}$. Then $\langle S_1, S_2\rangle\not=G$. 
   Since $G$ is finitely generated, by Zorn's lemma, we have $\langle S_1, S_2\rangle \subseteq m$ for some $m\in \mathcal{M}(G)$. Then $m\in M(S_1)\cap M(S_2)$. Thus $M(S_1)\cap M(S_2)\not=\emptyset$.

  To prove the converse, suppose that there exist nontrivial proper subgroups \(S_1,S_2\leq G\), with \(S_1,S_2\nsubseteq \Phi(G)\), such that $M(S_1)\cap M(S_2)\not=\emptyset$. Then there exists $m\in \mathcal{M}(G)$ such that $m\in M(S_1)\cap M(S_2)$. This implies $\langle S_1, S_2\rangle \subseteq m\not=G$. Hence $S_1\sim S_2$ in $\overline{\mathcal{G}(G)}$.
\end{proof}

We next characterize the connectedness of the graph \(\mathcal{G}(G)\) associated with a finitely generated group \(G\). In fact, we establish the stronger result that, whenever \(\mathcal{G}(G)\) is connected, its diameter is at most \(3\).
\begin{proposition}\label{propdiam}
 Let $G$ be a finitely generated group. Then $\mathcal{G}(G)$ is a nontrivial connected graph if and only if $|\mathcal{M}(G)|\geq 2$. In this case, $\diam(\mathcal{G}(G))\leq 3$.
\end{proposition}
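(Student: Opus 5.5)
The whole argument runs through Lemma \ref{lmiff}: two vertices $S_1,S_2$ are adjacent exactly when $M(S_1)\cap M(S_2)=\emptyset$. Two standing facts will be used. Since $G$ is finitely generated, every proper subgroup lies in some maximal subgroup (Zorn's lemma, as in the proof of Lemma \ref{lmiff}), so $M(S)\neq\emptyset$ for every vertex $S$. And a vertex $S$ is not contained in $\Phi(G)$, so $M(S)\neq\mathcal{M}(G)$; that is, some maximal subgroup omits $S$.

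\emph{Necessity.} Suppose $|\mathcal{M}(G)|\le 1$. If $\mathcal{M}(G)=\emptyset$, then $G$ is trivial, since a nontrivial finitely generated group has a maximal subgroup; hence $\mathcal{G}(G)$ is empty. If $\mathcal{M}(G)=\{m\}$, then $\Phi(G)=m$ and every proper subgroup lies in $m$, so $\mathcal{G}(G)$ again has no vertices. In either case the graph is trivial.

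\emph{Sufficiency and the diameter bound.} Suppose $|\mathcal{M}(G)|\ge 2$ and take distinct $m_1,m_2\in\mathcal{M}(G)$. Then $m_1\not\subseteq\Phi(G)$, because $m_1\not\subseteq m_2$. Also $m_1$ is nontrivial: otherwise $\Phi(G)\subseteq m_1$ would be trivial and $m_1\subseteq m_2$. So $m_1$ and $m_2$ are vertices, and $\langle m_1,m_2\rangle=G$ by maximality. Hence $\mathcal{G}(G)$ has an edge and is nontrivial.

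Now let $S_1,S_2$ be distinct vertices and split into cases.
\begin{enumerate}[(i)]
\item If $M(S_1)\cap M(S_2)=\emptyset$, then $S_1$ and $S_2$ are adjacent by Lemma \ref{lmiff}.
\item Otherwise, choose $m\in\mathcal{M}(G)\setminus M(S_1)$ and $m'\in\mathcal{M}(G)\setminus M(S_2)$. Both are vertices, by the argument above. Since $S_1\not\subseteq m$, maximality of $m$ gives $\langle S_1,m\rangle=G$, so $S_1\sim m$ whenever $S_1\neq m$. This holds automatically, because $S_1=m$ would put $m\in M(S_1)$. Similarly $S_2\sim m'$.
\end{enumerate}

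In case (ii) there are three subcases.
\begin{enumerate}[(a)]
\item If $m\neq m'$, then $m\sim m'$ (distinct maximal subgroups generate $G$), giving a path $S_1,m,m',S_2$ of length at most $3$.
\item If $m=m'$, the path $S_1,m,S_2$ has length $2$.
\item The only remaining subtlety is when $m$ coincides with $S_2$ or $m'$ coincides with $S_1$. If $m=S_2$, then $S_1\sim S_2$ directly. The case $m'=S_1$ is symmetric.
\end{enumerate}
So every pair of distinct vertices is at distance at most $3$, which proves connectedness and $\diam(\mathcal{G}(G))\le 3$.

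The main obstacle is not the path construction but checking, in the infinite finitely generated setting, that the chosen maximal subgroups really are vertices: nontrivial, proper, and not inside $\Phi(G)$. It is also essential that $M(S)\neq\emptyset$, which depends on Zorn's lemma and finite generation.
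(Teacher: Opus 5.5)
Your proof is correct and follows the paper's argument: when $S_1\nsim S_2$, you connect them through a maximal subgroup omitting $S_1$ and one omitting $S_2$, using that distinct maximal subgroups are adjacent, which gives a path of length at most $3$. Your extra checks fill in details the paper leaves implicit: that these maximal subgroups are genuine vertices, that $S_i\neq m$, and the trivial-group case in the necessity direction (where the paper's claim $|\mathcal{M}(G)|\geq 1$ actually fails).
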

\begin{proof}
Let $\mathcal{G}(G)$ be a nontrivial connected graph. Since $G$ is finitely generated, we have $|\mathcal{M}(G)|\geq 1$. If $|\mathcal{M}(G)|=1$, then $\mathcal{G}(G)$ is a null graph, a contradiction. Consequently, $|\mathcal{M}(G)|\geq 2$.

Conversely, suppose that $|\mathcal{M}(G)|\geq 2$, and let $S_1, S_2$ be two arbitrary vertices of $\mathcal{G}(G)$. If $S_1\sim S_2$, there is nothing to prove. Thus, assume that $S_1\nsim S_2$. Then $M(S_1)\cap M(S_2)\not=\emptyset$. Since $S_1$, $S_2\nsubseteq \Phi(G)$, there exist $m_1$, $m_2\in \mathcal{M}(G)$ such that $S_1\nsubseteq m_2$ and $S_2\nsubseteq m_1$. Thus, \(S_1\sim m_2\) and \(m_1\sim S_2\). If $m_1=m_2=m$, then  $S_1\sim m\sim S_2$.  On the other hand, if \(m_1\neq m_2\),  the maximality of \(m_1\) and \(m_2\) yields $ \langle m_1,m_2\rangle=G$ and hence \(m_1\sim m_2\). Consequently,  $S_1\sim m_2\sim m_1\sim S_2$ is a $S_1$-$S_2$ path of length at most \(3\). Therefore, $\mathcal{G}(G)$ is  connected and $\diam(\mathcal{G}(G))\leq 3$.
\end{proof}

We now establish necessary and sufficient conditions for each possible value of the diameter of \(\mathcal{G}(G)\), where \(G\) is a finitely generated group.
\begin{theorem}\label{thmdiam}
    Let $G$ be a finitely generated group with $|\mathcal{M}(G)|\geq 2$. Then 
    \begin{enumerate}[{(1)}]
    	\item   $\diam(\mathcal{G}(G))=1$ if and only if $V(\mathcal{G}(G))= \mathcal{M}(G)$ .
    	\item  $\diam(\mathcal{G}(G))=2$ if and only if either $\vert V(\mathcal{G}(G))\setminus \mathcal{M}(G)\vert=1$, or $\vert V(\mathcal{G}(G))\setminus \mathcal{M}(G)\vert\geq 2$ and, for every pair of nonadjacent vertices $S_1$, $S_2\in V(\mathcal{G}(G))$,  $M(S_1)\cup M(S_2)\subsetneq \mathcal{M}(G)$.
    	
    	\item  $\diam(\mathcal{G}(G))=3$ if and only if  $|V(\mathcal{G}(G))\setminus \mathcal{M}(G)|\geq 2$ and there exist nonadjacent vertices $S_1$, $S_2\in V(\mathcal{G}(G))$ such that  $M(S_1)\cup M(S_2)=\mathcal{M}(G)$.
    \end{enumerate}
   
\end{theorem}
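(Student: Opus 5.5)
Since Proposition~\ref{propdiam} gives $1\leq\diam(\mathcal{G}(G))\leq 3$ whenever $|\mathcal{M}(G)|\geq 2$, the three cases are mutually exclusive and exhaustive. It therefore suffices to prove (1) directly, and then characterize when the distance between two nonadjacent vertices is exactly $3$. This yields (3), and (2) follows as the complement.

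For (1), distinct maximal subgroups are always adjacent, because $\langle m_1,m_2\rangle$ properly contains $m_1$. So if $V(\mathcal{G}(G))=\mathcal{M}(G)$, the graph is complete and, having at least two vertices, has diameter $1$. Conversely, suppose some vertex $S$ is not maximal. Since $G$ is finitely generated, $S<m$ for some maximal $m$. Then $m\in M(S)\cap M(m)$, so $S\nsim m$ by Lemma~\ref{lmiff}, and the diameter is at least $2$.

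The key step is the following distance criterion: for nonadjacent vertices $S_1,S_2$, we have $d(S_1,S_2)=2$ if and only if $M(S_1)\cup M(S_2)\neq\mathcal{M}(G)$.
\begin{itemize}
\item[$(\Leftarrow)$] If some $m$ contains neither $S_1$ nor $S_2$, then $m$ is a vertex; it is not in $\Phi(G)$ and is proper and nontrivial since $|\mathcal{M}(G)|\geq 2$. We have $M(m)=\{m\}$, which is disjoint from both $M(S_i)$, so $S_1\sim m\sim S_2$ by Lemma~\ref{lmiff}.
\item[$(\Rightarrow)$] If $S_1\sim T\sim S_2$, pick any $m\in M(T)$; this is nonempty since $T$ is proper. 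Adjacency gives $m\notin M(S_1)\cup M(S_2)$.
\end{itemize}
Combined with the path of length at most $3$ from Proposition~\ref{propdiam}, this says: $\diam=3$ if and only if there exist nonadjacent $S_1,S_2$ with $M(S_1)\cup M(S_2)=\mathcal{M}(G)$.

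It remains to show that such a pair forces $|V\setminus\mathcal{M}(G)|\geq 2$. This is the main obstacle, but it should yield to the following argument. Suppose $S_1$ is maximal, say $S_1=m$, so $M(S_1)=\{m\}$. Then $M(S_2)\supseteq\mathcal{M}(G)\setminus\{m\}$. Nonadjacency means $M(S_2)\cap\{m\}\neq\emptyset$, so $m\in M(S_2)$ and hence $M(S_2)=\mathcal{M}(G)$. This gives $S_2\leq\Phi(G)$, contradicting that $S_2$ is a vertex. Symmetrically $S_2$ is not maximal, so $S_1,S_2$ are two distinct non-maximal vertices. This proves (3).

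Finally, (2) follows by excluding (1) and (3). The case $|V\setminus\mathcal{M}(G)|=1$ gives diameter $\neq 1$ by (1), and $\neq 3$ by the argument just given. The other alternative in (2) is exactly the negation of the witness condition in (3), given $|V\setminus\mathcal{M}(G)|\geq 2$ (which excludes (1)). Conversely, if the diameter is $2$, then $V\neq\mathcal{M}(G)$; either exactly one non-maximal vertex exists, or there are at least two and no witness pair exists by (3).
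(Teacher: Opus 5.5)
Your proof is correct and uses the same ingredients as the paper: Lemma~\ref{lmiff}, the fact that a maximal subgroup containing neither $S_1$ nor $S_2$ is a common neighbour (and conversely that any common neighbour $T$ yields such an $m\in M(T)$), and the bound $\diam(\mathcal{G}(G))\le 3$ from Proposition~\ref{propdiam}. The only difference is the order: the paper proves (2) directly, treating the case $|V(\mathcal{G}(G))\setminus\mathcal{M}(G)|=1$ with the explicit path $A\sim m_2\sim m_1$, and gets (3) by complement, whereas you prove (3) directly and get (2) by complement; your step showing that a witness pair cannot contain a maximal subgroup is the contrapositive of that same path argument.
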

\begin{proof}
	Since $|\mathcal{M}(G)|\geq 2$, by Proposition \ref{propdiam}, $\mathcal{G}(G)$ is connected. 
\begin{enumerate}[{(1)}]
    \item Since $G$ is finitely generated, each subgroup of $G$ is contained in some maximal subgroups of $G$. Therefore, by Lemma \ref{lmiff}, we have $\diam(\mathcal{G}(G))=1$ if and only if every vertex of $\mathcal{G}(G)$ is a maximal subgroup of $G$.
\item Suppose that $\diam(\mathcal{G}(G))=2$.  By the first part, $\vert V(\mathcal{G}(G))\setminus \mathcal \mathcal{M}(G)\vert\geq 1$. If $\vert V(\mathcal{G}(G))\setminus \mathcal \mathcal{M}(G)\vert=1$, we are done. Let $\vert V(\mathcal{G}(G))\setminus \mathcal \mathcal{M}(G)\vert\geq 2$. It remains to show  for every pair of nonadjacent vertices $S_1$, $S_2\in V(\mathcal{G}(G))$,  $M(S_1)\cup M(S_2)\subsetneq \mathcal{M}(G)$. Suppose, to the contrary, that there exist nonadjacent vertices $S_1$, $S_2$ in $ V(\mathcal{G}(G))\setminus \mathcal{M}(G)$ such that $M(S_1)\cup M(S_2)=\mathcal{M}(G)$. Then there is no vertex $H\in V(\mathcal{G})$ for which $S_1\sim H\sim S_2$. Therefore, $\di(S_1, S_2)\geq 3$, a contradiction. Consequently, $M(S_1)\cup M(S_2)\subsetneq \mathcal{M}(G)$ for every pair of nonadjacent vertices \(S_1,S_2\in V(\mathcal{G}(G))\).

To prove the converse, first assume that $\vert V(\mathcal{G}(G))\setminus \mathcal \mathcal{M}(G)\vert=1$, and  let  $A\in V(\mathcal{G}(G))\setminus \mathcal{M}(G)$. Since $G$ is finitely generated and $A\nsubseteq \Phi(G)$, there exist $m_1$, $m_2\in \mathcal{M}(G)$ such that $A\subsetneq m_1$ and $A\nsubseteq m_2$. Thus $A\sim m_2\sim m_1$. Since \(A\subsetneq m_1\), the vertices \(A\) and \(m_1\) are nonadjacent, and hence $\di(A,m_1)$=2. Furthermore, any two distinct maximal subgroups of \(G\) are adjacent. It follows that $\diam(\mathcal{G}(G))=2$.

Now suppose that  $\vert V(\mathcal{G}(G))\setminus \mathcal \mathcal{M}(G)\vert\geq2$. By part $(1)$,  $\mathcal{G}(G)$ is not complete.  Let $S_1$, $S_2\in V(\mathcal{G}(G))$ be two non adjacent vertices.  By hypothesis, there exists $m\in \mathcal{M}(G)$ such that $m\notin M(S_1)\cup M(S_2)$. Thus, every pair of nonadjacent vertices of \(\mathcal{G}(G)\) is at distance \(2\), and consequently, $\diam(\mathcal{G}(G))=2$.

\item  This follows immediately from the preceding two cases, together with the fact that $\diam(\mathcal{G}(G))\leq 3$.
\end{enumerate}
\end{proof}
Recall that  Lu--Meng--Qin--Wei--Zhang \cite{lu} introduced the graph \(\Gamma^{*}(G)\), obtained by deleting the isolated vertices from the generating graph of a finite group \(G\), and characterized the finite groups for which \(\Gamma^{*}(G)\) is complete. For a finite group \(G\), the graph \(\Gamma^{*}(G)\) coincides with the graph \(\mathcal{G}(G)\) introduced in this paper. We now recall their characterization of the finite groups for which \(\mathcal{G}(G)\) is complete.
\begin{theorem}\cite[Theorem 2.5]{lu}\label{lmcom}
Let $G$ be a finite group. Then $\mathcal{G}(G)$ is complete if and only if $G$ is isomorphic to $\mathbb{Z}_{p^2}$, $\mathbb{Z}_p\times \mathbb{Z}_p$, $\mathbb{Z}_{pq}$, $Z_p\rtimes Z_q$ or the quaternion group $\operatorname{Q}_8$.
\end{theorem}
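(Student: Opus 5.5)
By Theorem~\ref{thmdiam}(1), together with Lemma~\ref{lmiff}, $\mathcal{G}(G)$ is complete exactly when every nontrivial proper subgroup $H\nsubseteq\Phi(G)$ is maximal. Equivalently, every proper subgroup that is not maximal lies in $\Phi(G)$. The whole proof consists of exploiting this condition, which I will call $(\ast)$.

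\emph{Sufficiency.} I would check $(\ast)$ directly for each group in the list:
\begin{itemize}
\item In $\mathbb{Z}_{pq}$ and $\mathbb{Z}_p\rtimes\mathbb{Z}_q$ every proper nontrivial subgroup has prime order and is maximal.
\item In $\mathbb{Z}_p\times\mathbb{Z}_p$ the vertices are the $p+1$ lines, all maximal.
\item In $\operatorname{Q}_8$ the only non-maximal nontrivial subgroup is $\langle -1\rangle=\Phi(\operatorname{Q}_8)$.
\item In $\mathbb{Z}_{p^2}$ the vertex set is empty, so completeness holds vacuously. Here I must follow the convention of \cite{lu}: the same vacuous reasoning applies to every cyclic $p$-group. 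I will therefore restrict to the case where $\mathcal{G}(G)$ is nonempty, or equivalently $|\mathcal{M}(G)|\ge 2$ by Proposition~\ref{propdiam}, and record $\mathbb{Z}_{p^2}$ as the degenerate entry as in \cite{lu}.
\end{itemize}

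\emph{Necessity, cyclic case.} Let $G=\mathbb{Z}_n$ with $n=\prod_{i=1}^k p_i^{t_i}$ and $k\ge 2$.
\begin{itemize}
\item If $k\ge 3$, the subgroup of order $p_1$ is not in $\Phi(G)$ but is not maximal, violating $(\ast)$.
\item If $k=2$, the subgroup of order $p_1^{t_1}$ is not in $\Phi$. It is maximal only if $t_2=1$, and symmetrically $t_1=1$. This forces $G\cong\mathbb{Z}_{pq}$.
\end{itemize}

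\emph{Necessity, noncyclic $p$-group.} The subgroups containing $\Phi(G)$ correspond to subspaces of $G/\Phi(G)\cong\mathbb{F}_p^d$. If $d\ge 3$, a codimension-$2$ subspace gives a non-maximal subgroup not in $\Phi$, so $d=2$. Now take any $x\notin\Phi(G)$. By $(\ast)$, $\langle x\rangle$ is maximal, hence cyclic of index $p$.
\begin{itemize}
\item If $|G|=p^2$, we get $G\cong\mathbb{Z}_p\times\mathbb{Z}_p$.
\item Otherwise $G$ has a cyclic maximal subgroup. I would invoke the classical list of such groups: $\mathbb{Z}_{p^{n-1}}\times\mathbb{Z}_p$, the modular group $M_{p^n}$, and $\dih_{2^n}$, $\qdic_n$, $\operatorname{Q}_{2^n}$. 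I would then check which of them have \emph{every} element outside $\Phi$ of order $p^{n-1}$. Each fails except $\operatorname{Q}_8$. For example, $\mathbb{Z}_{p^{n-1}}\times\mathbb{Z}_p$ and $M_{p^n}$ contain elements of order $p$ outside $\Phi$, and $\operatorname{Q}_{2^n}$ with $n\ge 4$ has elements of order $4$ outside $\Phi$.
\end{itemize}

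\emph{Necessity, non-nilpotent case.} This is the step I expect to be the main obstacle. The plan runs in three steps.
\begin{enumerate}
\item Since $\Phi(G)$ is nilpotent and $G$ is not, for some prime $p$ a Sylow $p$-subgroup $P$ is not contained in $\Phi(G)$. Moreover, any nontrivial $p$-subgroup $Q\nsubseteq\Phi(G)$ is maximal by $(\ast)$.
\item A minimal subgroup $\langle x\rangle$ of prime order with $x\notin\Phi$ is then maximal. I would first try to show that such elements exist for two different primes: a group in which all elements of some prime order lie in $\Phi$ forces that Sylow subgroup into $\Phi$ by a standard argument, which would contradict step 1 for the relevant prime.
\item With two primes $p\ne q$ and prime-order subgroups $\langle x\rangle$, $\langle y\rangle$ both maximal, the group $G$ has maximal subgroups of orders $p$ and $q$. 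Counting orders, or applying the Frobenius-type argument that a group with a maximal subgroup of prime order $q$ is $\mathbb{Z}_p^a\rtimes\mathbb{Z}_q$ with the action irreducible, should force $|G|=pq$.
\end{enumerate}
Once $|G|=pq$, non-nilpotence gives $G\cong\mathbb{Z}_p\rtimes\mathbb{Z}_q$ with $q\mid p-1$. If step 2 fails I would instead handle the residual case via Burnside's normal $p$-complement theorem applied to a Sylow subgroup contained in $\Phi(G)$.
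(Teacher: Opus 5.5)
The paper gives no proof of this statement; it quotes it from Lu--Meng--Qin--Wei--Zhang, so your proposal has to stand on its own. Your reduction to $(\ast)$, the sufficiency checks and the $p$-group analysis are sound. The non-nilpotent case, however, has a genuine gap at step~2.

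The ``standard argument'' you invoke is false: it is not true that if every element of order $p$ lies in $\Phi(G)$, then so does a Sylow $p$-subgroup. In $\operatorname{Q}_8$ (which even satisfies $(\ast)$) and in $\dic_3$, the unique involution generates $\Phi(G)$, yet the Sylow $2$-subgroup is not contained in $\Phi(G)$. Step~1 as you state it also yields only \emph{one} prime, so even a valid version would not give the two primes that step~3 needs. Your fallback targets an empty case. By Lemma~\ref{lmpdf}, every prime divisor of $|G|$ divides $[G:\Phi(G)]$, so no Sylow subgroup of a finite group is ever contained in $\Phi(G)$. Under $(\ast)$, producing an element of order $p$ outside $\Phi(G)$ amounts to showing that the Sylow $p$-subgroup has order $p$. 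So step~2 is the whole difficulty, not a preliminary.

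The missing idea is to use Lemma~\ref{lmpdf} at full strength.
\begin{itemize}
\item Suppose $G$ is not a $p$-group. Then every Sylow subgroup $P$ is proper and not contained in $\Phi(G)$, hence maximal by $(\ast)$.
\item Every proper subgroup of $P$ is then non-maximal in $G$, so by $(\ast)$ it lies in $\Phi(G)$. Thus $P$ has a unique maximal subgroup and is cyclic.
\item Let $r$ be the smallest prime dividing $|G|$. Burnside's theorem gives a normal $r$-complement $K$. Any Sylow $q$-subgroup $Q$ with $q\neq r$ lies in $K$ and is maximal, so $Q=K$.
\item Now $Q$ is normal and maximal, so a Sylow $r$-subgroup $R$ has order $r$.
\item $R\Phi(Q)$ is a subgroup of order $rq^{b-1}$ containing $R$. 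Maximality of $R$ forces $\Phi(Q)=1$, so $|G|=qr$.
\end{itemize}

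This single argument treats all non-$p$-groups at once, and it also repairs two further defects in your proposal.
\begin{itemize}
\item Your case split never handles noncyclic nilpotent groups with at least two prime divisors, e.g.\ $\mathbb{Z}_2\times\mathbb{Z}_2\times\mathbb{Z}_3$.
\item Your witness for cyclic groups with $k\ge 3$ fails when $t_1\ge 2$. For $n=p_1^2p_2p_3$, the subgroup of order $p_1$ \emph{is} $\Phi(\mathbb{Z}_n)$. The Sylow $p_1$-subgroup would work instead.
\end{itemize}
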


Let $G$ be a finite group with exactly two maximal subgroups. Then $\diam(\mathcal{G}(G))=1$ if and only if $G\cong \mathbb{Z}_{pq}$; otherwise, $\diam(\mathcal{G}(G))=2$. So, we next consider finite groups \(G\) with at least three maximal subgroups and determine the diameter of \(\mathcal{G}(G)\).

\begin{proposition}\label{uin-diam}
    Let $G$ be a finite group with $3\leq |\mathcal{M}(G)|=k$. Then

\begin{enumerate}[(i)]
	\item If $\UIN(G)\geq \IN(G)> \lceil \frac{k+1}{2}\rceil$, then $\diam(\mathcal{G}(G))=3$. 
	\item  If $\IN(G)\leq \UIN(G)\leq \lceil \frac{k+1}{2}\rceil$ and $G$ is not isomorphic to any of the groups listed in Theorem \ref{lmcom}, then $\diam(\mathcal{G}(G))=2$.
\end{enumerate}
\end{proposition}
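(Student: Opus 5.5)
The plan is to reduce both parts to the criterion of Theorem \ref{thmdiam}. For finite $G$, that theorem gives $\diam(\mathcal{G}(G))=3$ exactly when there are nonadjacent vertices $S_1,S_2$ with $M(S_1)\cup M(S_2)=\mathcal{M}(G)$ and at least two non-maximal vertices. Throughout I would use Lemma \ref{lmiff}: nonadjacency of $S_1,S_2$ means $M(S_1)\cap M(S_2)\neq\emptyset$. The other basic fact is that a vertex $S$ satisfies $S\subseteq \bigcap_{m\in M(S)} m$. So if $|M(S)|\geq \UIN(G)$, that intersection equals $\Phi(G)$, forcing $S\subseteq\Phi(G)$, which is impossible for a vertex. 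Hence every vertex $S$ satisfies $|M(S)|\leq \UIN(G)-1$.

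For (i), I would construct the required pair explicitly. Split $\mathcal{M}(G)$ into two sets $A,B$ with $A\cup B=\mathcal{M}(G)$ and $A\cap B=\{m_0\}$, with $|A|=\lceil \frac{k+1}{2}\rceil$ and $|B|=\lfloor\frac{k+1}{2}\rfloor$. Then set $S_1=\bigcap A$ and $S_2=\bigcap B$. Since $|A|,|B|\leq\lceil\frac{k+1}{2}\rceil<\IN(G)$, neither intersection equals $\Phi(G)$. Each contains $\Phi(G)$, so neither is contained in $\Phi(G)$; in particular each is nontrivial, and each is proper.

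Because $k\geq 3$, both $|A|,|B|\geq 2$, so each $S_i$ is an intersection of at least two distinct maximal subgroups and hence is not maximal. We have $S_1\neq S_2$: otherwise $M(S_1)\supseteq A\cup B=\mathcal{M}(G)$, giving $S_1\subseteq \Phi(G)$. Thus $|V(\mathcal{G}(G))\setminus\mathcal{M}(G)|\geq 2$. Moreover $m_0\in M(S_1)\cap M(S_2)$, so $S_1,S_2$ are nonadjacent, and $M(S_1)\cup M(S_2)\supseteq A\cup B=\mathcal{M}(G)$. Theorem \ref{thmdiam}(3) then gives diameter $3$.

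For (ii), since $G$ is not in the list of Theorem \ref{lmcom}, $\mathcal{G}(G)$ is not complete, so $\diam\geq 2$. By Theorem \ref{thmdiam} it suffices to rule out nonadjacent $S_1,S_2$ with $M(S_1)\cup M(S_2)=\mathcal{M}(G)$. Suppose such a pair exists. Then by inclusion–exclusion and $M(S_1)\cap M(S_2)\neq\emptyset$ we get $|M(S_1)|+|M(S_2)|\geq k+1$. By the observation above, each $|M(S_i)|\leq \UIN(G)-1\leq\lceil\frac{k+1}{2}\rceil-1$. Combining, $2\lceil\frac{k+1}{2}\rceil\geq k+3$, which is false for both parities of $k$. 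Hence the diameter is $2$, whether $|V(\mathcal{G}(G))\setminus\mathcal{M}(G)|$ is $1$ or at least $2$.

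The main point needing care is the non-degeneracy in (i): the vertices $S_1,S_2$ must be genuine, distinct, non-maximal vertices. This is where $k\geq 3$ and the hypothesis $\IN(G)>\lceil\frac{k+1}{2}\rceil$ enter. The counting in (ii) is routine once the bound $|M(S)|<\UIN(G)$ is in place.
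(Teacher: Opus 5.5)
Your proof is correct and follows the paper's argument: in (i) you use the same split of $\mathcal{M}(G)$ into two halves of sizes $\lceil\frac{k+1}{2}\rceil$ and $\lfloor\frac{k+1}{2}\rfloor$ sharing one maximal subgroup, and in (ii) you combine the bound $|M(S)|\le \UIN(G)-1$ with a count against Theorem~\ref{thmdiam}. You are somewhat more careful than the paper in (i), since you check that $S_1\neq S_2$ and that both are non-maximal, and in (ii) you merge the paper's two subcases ($\UIN(G)<\lceil\frac{k+1}{2}\rceil$ and $\UIN(G)=\lceil\frac{k+1}{2}\rceil$) into the single inequality $2\lceil\frac{k+1}{2}\rceil\ge k+3$.
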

\begin{proof}
$(i)$. Suppose that $\IN(G)>\lceil \frac{k+1}{2}\rceil$. Define $S_1:=m_1\cap m_2\cap \ldots \cap m_{\lceil\frac{k+1}{2}\rceil}$, $S_2:=m_{\lceil\frac{k+1}{2} \rceil}\cap \cdots \cap m_k$, where $m_i\in \mathcal{M}(G)$ for $i\in [k]$. Then $S_1,S_2 \in V(\mathcal{G}(G))$ and $m_{\lceil\frac{k+1}{2}\rceil}\in M(S_1)\cap M(S_2)$. Thus, the vertices \(S_1\) and \(S_2\) are not adjacent in $\mathcal{G}(G)$ and  $M(S_1)\cup M(S_2)=\mathcal{M}(G)$.  Therefore, by Theorem~\ref{thmdiam}, $\diam(\mathcal{G}(G))=3$.

$(ii)$. Since $G$ is not isomorphic to one of the groups mentioned in Lemma \ref{lmcom}, $\diam(\mathcal{G}(G))\geq 2$. We now consider two cases. First, we assume that  $\UIN(G)<\lceil \frac{k+1}{2}\rceil$.  Then for any $H\in \mathcal{G}(G)$, we have $|M(H)|\leq \lceil \frac{k+1}{2}\rceil -2$, since $H \nsubseteq \Phi(G)$. Therefore, for any $H_1$, $H_2\in V(\mathcal{G}(G))$, we have $|M(H_1)|+|M(H_2)|\leq 2\lceil \frac{k+1}{2}\rceil -4<k$. This implies $M(H_1)\cup M(H_2)\subsetneq \mathcal{M}(G)$ and by Theorem \ref{thmdiam}, $\diam(\mathcal{G}(G))=2$.

 Suppose that $\IN(G)\leq \UIN(G)=\lceil \frac{k+1}{2}\rceil$. Then, for any $H\in V(\mathcal{G}(G))$, we have $|M(H)|\leq \lceil\frac{k+1}{2}\rceil-1$. Let \(H_1,H_2\in V(\mathcal{G}(G))\) be two nonadjacent vertices. Since \(H_1\nsim H_2\), we have $M(H_1)\cap M(H_2)\neq\emptyset.$
  Therefore $|M(H_1)|+|M(H_2)|\leq 2\lceil \frac{k+1}{2}\rceil -3<k$ and hence $M(H_1)\cup M(H_2)\subsetneq \mathcal{M}(G)$. It now follows from Theorem~\ref{thmdiam} that $\diam(\mathcal{G}(G))=2$.
\end{proof}
As an immediate consequence of the preceding results, we obtain the diameter of $\mathcal{G}(G)$ when $G$ is cyclic.
\begin{corollary}
Let $G$ be a cyclic group of order $n=\prod_{i=1}^kp_i^{t_i}$, where $k\geq 2$. Then 
     \begin{align*}
\diam (\mathcal{G}(G))=\begin{cases}$1$~ &\text{if}~ k=2 \text{ and } t_1=t_2=1,  ~\\ $2$~ &\text{if}~ k=2 \text{ and either } t_1\geq 2 \text{ or } t_2\geq 2,~\\ 
3~ &\ k\geq 3.~
\end{cases}
\end{align*} 
\end{corollary}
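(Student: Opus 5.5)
The proof splits according to $k$ and then applies the diameter criteria of Theorem~\ref{thmdiam}. Write $G\cong\mathbb{Z}_n$ with $n=\prod_{i=1}^k p_i^{t_i}$. Then $\mathcal{M}(G)=\{m_1,\dots,m_k\}$ with $m_i=\langle p_i\rangle$, and $\Phi(G)=\langle p_1\cdots p_k\rangle$. Each subgroup is $\langle d\rangle$ for some $d\mid n$, and $M(\langle d\rangle)=\{m_i: p_i\mid d\}$. A subgroup is a vertex of $\mathcal{G}(G)$ exactly when $d\neq 1,n$ and $p_1\cdots p_k\nmid d$. Since $k\geq 2$, Proposition~\ref{propdiam} shows that $\mathcal{G}(G)$ is connected with diameter at most $3$.

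For $k=2$ with $t_1=t_2=1$ we have $G\cong\mathbb{Z}_{pq}$. Its only nontrivial proper subgroups are the two maximal ones, so $V(\mathcal{G}(G))=\mathcal{M}(G)$, and Theorem~\ref{thmdiam}(1) (or Theorem~\ref{lmcom}) gives diameter $1$.

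For $k=2$ with, say, $t_1\geq 2$, the subgroup $\langle p_1^2\rangle$ is a vertex that is not maximal, so the diameter is at least $2$. To show it is exactly $2$, I would verify the criterion of Theorem~\ref{thmdiam}(2). Every vertex $S$ satisfies $|M(S)|=1$, because $M(S)=\emptyset$ is impossible and $M(S)=\mathcal{M}(G)$ would force $S\subseteq\Phi(G)$. If two vertices are nonadjacent, Lemma~\ref{lmiff} gives $M(S_1)\cap M(S_2)\neq\emptyset$, so $M(S_1)=M(S_2)$ is a single maximal subgroup. Hence $M(S_1)\cup M(S_2)\subsetneq\mathcal{M}(G)$, and this holds whether there is one non-maximal vertex or several. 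Alternatively, Corollary~\ref{uid-cyclic} gives $\UIN(G)=2\leq\lceil 3/2\rceil$, which would allow an appeal to Proposition~\ref{uin-diam}(ii). That proposition, however, assumes $|\mathcal{M}(G)|\geq 3$, so the direct argument is the one to use. The case $\mathbb{Z}_{p^2q}$ is not on the complete list of Theorem~\ref{lmcom}, which confirms the diameter is $2$ rather than $1$.

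For $k\geq 3$, I would use Theorem~\ref{thmdiam}(3) directly with $S_1=\langle p_1p_2\cdots p_{k-1}\rangle=m_1\cap\cdots\cap m_{k-1}$ and $S_2=\langle p_{k-1}p_k\rangle=m_{k-1}\cap m_k$. Neither is contained in $\Phi(G)$, since each omits some prime: $p_k$ for $S_1$ and $p_1$ for $S_2$ (using $k\geq 3$). Both are therefore non-maximal vertices, so $|V(\mathcal{G}(G))\setminus\mathcal{M}(G)|\geq 2$. They share $m_{k-1}$, so by Lemma~\ref{lmiff} they are nonadjacent, and $M(S_1)\cup M(S_2)=\mathcal{M}(G)$. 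Theorem~\ref{thmdiam}(3) then gives diameter $3$.

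This is essentially Proposition~\ref{uin-diam}(i), except that $\IN(G)=k$ need not exceed $\lceil\frac{k+1}{2}\rceil$, so I would not cite that proposition as stated. The only step needing care is checking that the chosen $S_1,S_2$ are genuine vertices, meaning proper, nontrivial, and not inside $\Phi$. This is exactly where $k\geq 3$ is used, and it is the main point to get right.
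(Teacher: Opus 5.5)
Your proof is correct. It reaches the result by a more hands-on route than the paper, and it contains one false side remark.

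\textbf{Comparison with the paper.} The paper's proof is a one-line appeal to Corollary~\ref{uid-cyclic} ($\IN(G)=\UIN(G)=k$) together with Proposition~\ref{uin-diam}. You bypass the intersection numbers and check the criteria of Theorem~\ref{thmdiam} directly with explicit subgroups. For $k\geq 3$ this is the construction inside the proof of Proposition~\ref{uin-diam}(i): two intersections of maximal subgroups that share one maximal subgroup and together cover $\mathcal{M}(G)$. You specialize it to $S_1=m_1\cap\cdots\cap m_{k-1}$ and $S_2=m_{k-1}\cap m_k$, so for $k\geq 3$ the two arguments differ only in presentation.

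\textbf{The case $k=2$.} Your separate treatment here is needed. Proposition~\ref{uin-diam} assumes $|\mathcal{M}(G)|\geq 3$, so the paper's citation does not reach this case. It is covered only by the remark stated just before Proposition~\ref{uin-diam}, and by Case~2 of the proof of Theorem~\ref{finiteabedia}. Your observation that every vertex satisfies $|M(S)|=1$ gives a clean verification of Theorem~\ref{thmdiam}(2).

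\textbf{A correction.} Your claim that $\IN(G)=k$ ``need not exceed'' $\lceil\frac{k+1}{2}\rceil$ is false, although your proof does not depend on it.
\begin{itemize}
\item For odd $k$, $\lceil\frac{k+1}{2}\rceil=\frac{k+1}{2}<k$ whenever $k>1$.
\item For even $k$, $\lceil\frac{k+1}{2}\rceil=\frac{k+2}{2}<k$ whenever $k>2$.
\end{itemize}
So for every $k\geq 3$ the hypothesis of Proposition~\ref{uin-diam}(i) holds, and that proposition applies as stated. This is exactly how the paper obtains the value $3$.
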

\begin{proof}
	The result follows immediately from Corollary ~\ref{uid-cyclic} and Proposition ~\ref{uin-diam}.
\end{proof}

In the next two corollaries, we show that $\mathcal{G}(G)$ has diameter $2$ when $G$ is an almost simple group or a symmetric group.

\begin{corollary}
 Let $G$ be a finite and almost simple group with $\vert \mathcal{M}(G)\vert \geq 8$. Then $\diam(\mathcal{G}(G))=2$.  
\end{corollary}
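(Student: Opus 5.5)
The plan is to apply Proposition \ref{uin-diam}(ii). This requires two inputs: a bound $\UIN(G)\leq \lceil \frac{k+1}{2}\rceil$ with $k=|\mathcal{M}(G)|\geq 8$, and the fact that $G$ is not one of the groups in Theorem \ref{lmcom}.

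The second input is immediate. Each group listed in Theorem \ref{lmcom} is solvable, namely $\mathbb{Z}_{p^2}$, $\mathbb{Z}_p\times\mathbb{Z}_p$, $\mathbb{Z}_{pq}$, $\mathbb{Z}_p\rtimes\mathbb{Z}_q$ and $\operatorname{Q}_8$. An almost simple group contains a nonabelian simple socle, so it is not solvable and cannot appear in that list.

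For the first input I would use the known bound on the intersection behaviour of almost simple groups. Burness--Garonzi--Lucchini \cite{burn,burn20} show that for almost simple $G$ any irredundant family of maximal subgroups has bounded size; in particular, the intersection of any few distinct maximal subgroups is already trivial. Since $G$ is almost simple, $\Phi(G)=1$. The Frattini subgroup is a nilpotent normal subgroup, and the socle $T$ is the unique minimal normal subgroup and is nonabelian, so $\Phi(G)$ must be trivial. Hence $\UIN(G)$ is the least $t$ such that any $t$ distinct maximal subgroups intersect trivially.

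The key step is to show that any $5$ distinct maximal subgroups of an almost simple group intersect trivially, so that $\UIN(G)\leq 5$. My first attempt would go as follows. If $H=m_1\cap\cdots\cap m_5\neq 1$, then the family $\{m_1,\dots,m_5\}$, after discarding redundant members, extends to an irredundant chain of intersections ending at $1$. Its length is bounded by the maximal irredundant size (the \emph{maximal dimension}). The results in \cite{burn} then bound how long such a chain can stay nontrivial. With the bound $\UIN(G)\leq 5$ in hand, the hypothesis $k\geq 8$ gives $\lceil\frac{k+1}{2}\rceil\geq \lceil 9/2\rceil=5\geq\UIN(G)$. Proposition \ref{uin-diam}(ii) then yields $\diam(\mathcal{G}(G))=2$, and the threshold $8$ is exactly what makes this inequality work.

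The main obstacle is justifying $\UIN(G)\leq 5$ uniformly. Results like those in \cite{burn,burn20} naturally bound the minimal number of maximal subgroups with trivial intersection, i.e.\ $\IN(G)$, and not the uniform version; a uniform bound of this kind can fail, for instance when there are many maximal subgroups containing a common nontrivial element. If the cited bound does not directly give this, I would fall back on Theorem \ref{thmdiam}(2) instead. That route requires showing that for any nonadjacent vertices $S_1,S_2$ we have $M(S_1)\cup M(S_2)\neq\mathcal{M}(G)$. To do this I would take a nontrivial element $x\in S_1$ and $y\in S_2$ and produce a maximal subgroup containing neither, using that $G$ cannot be covered by the maximal subgroups containing $x$ together with those containing $y$ when $k\geq 8$.
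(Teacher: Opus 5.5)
You follow the paper's route: feed a Burness--Garonzi--Lucchini bound into Proposition~\ref{uin-diam}(ii). The paper's proof quotes $\IN(G)\leq 4$ from \cite[Theorem 1(i)]{burn} and applies that proposition directly. You are right that this is not enough. The hypothesis of Proposition~\ref{uin-diam}(ii) is a bound on $\UIN(G)$: its proof uses $|M(H)|\leq\lceil\frac{k+1}{2}\rceil-1$ for every vertex $H$, and $\IN(G)\leq 4$ does not provide this.

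However, the bound you propose instead, $\UIN(G)\leq 5$ for every almost simple $G$, is false, so your key step cannot be carried out. In $S_n$ the transposition $(1\,2)$ lies in the $n-2$ point stabilizers of $3,\dots,n$, which are distinct maximal subgroups, so $\UIN(S_n)\geq n-1$. Already in $A_5$ the involution $(1\,2)(3\,4)$ lies in five maximal subgroups: the $A_4$ fixing $5$, two of the six $D_{10}$'s, and the stabilizers of $\{1,2\}$ and of $\{3,4\}$. Hence $\UIN(A_5)\geq 6$. The irredundant-chain sketch cannot produce a constant either. The maximal size of an irredundant family is unbounded over almost simple groups: the stabilizers of $1,\dots,n-1$ form one of size $n-1$ in $S_n$. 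Moreover, nothing in \cite{burn,burn20} bounds how many maximal subgroups contain a fixed nontrivial element.

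Your fallback through Theorem~\ref{thmdiam} is the correct reformulation. Since $\Phi(G)=1$ one may pass to cyclic subgroups. Then $\diam(\mathcal{G}(G))=2$ is equivalent to: for all nontrivial $x,y$ with $\langle x,y\rangle\neq G$, some maximal subgroup contains neither $x$ nor $y$. But your sentence that $G$ cannot be covered by the maximal subgroups containing $x$ together with those containing $y$ when $k\geq 8$ just restates this, with no argument. The hypothesis $k\geq 8$ does no work there. A counting proof would need $|M(x)|+|M(y)|<k$, which is again a uniform bound you do not have.

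The missing ingredient is genuinely almost-simple input. One way to supply it is to find, for given $x,y$, a conjugacy class $\mathcal{C}$ of core-free maximal subgroups such that the proportion of members of $\mathcal{C}$ containing $x$ plus the proportion containing $y$ is less than $1$. That is a fixed point ratio estimate for the primitive action of $G$ on $\mathcal{C}$. Without such an input, neither your proposal nor the paper's one-line argument establishes the corollary.
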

\begin{proof}
Since $G$ is an almost finite simple group, by \cite[Theorem 1(i)]{burn}, we have $\IN(G)\leq 4$. By hypothesis $\vert \mathcal{M}(G)\vert \geq 8$. Consequently, 
\begin{equation*}
\IN(G)\leq 4 < \lceil \frac{8+1}{2}\rceil =5.
\end{equation*}
Therefore, by Proposition \ref{uin-diam}, $\diam(\mathcal{G}(G))=2$.
\end{proof}

\begin{corollary}
	Let $S_n$ be the symmetric group on the $n$ elements, where $n\geq 3$. Then
	\begin{align*}
		\diam (\mathcal{G}(S_n))=\begin{cases}$1$~ &\text{if}~ n=3  ~\\ $2$~ &\text{if}~ n\geq 4 .~
		\end{cases}
	\end{align*} 
\end{corollary}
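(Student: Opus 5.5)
For $n=3$ we use Theorem \ref{lmcom}. Since $S_3\cong \mathbb{Z}_3\rtimes\mathbb{Z}_2$, every vertex of $\mathcal{G}(S_3)$ is adjacent to every other vertex. The maximal subgroups of $S_3$ are $A_3$ and the three subgroups of order $2$, and $\Phi(S_3)=1$, so the vertex set is exactly $\mathcal{M}(S_3)$. Part (1) of Theorem \ref{thmdiam} then gives $\diam(\mathcal{G}(S_3))=1$.

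For $n\geq 4$ we first show the diameter is at least $2$. The group $S_n$ does not appear in the list of Theorem \ref{lmcom}, since it is not of the form $\mathbb{Z}_p\rtimes\mathbb{Z}_q$ and has order at least $24$. Alternatively, one can exhibit a non-maximal vertex directly: the subgroup $\langle(1\,2)\rangle$ lies in the point stabiliser $S_{n-1}$, it is not contained in $\Phi(S_n)=1$, and it is proper and nontrivial. Combined with Proposition \ref{propdiam}, which gives $\diam\leq 3$, it remains to rule out diameter $3$.

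To rule out diameter $3$, the cleanest route is Proposition \ref{uin-diam}(ii), which needs $\UIN(S_n)\leq\lceil\frac{k+1}{2}\rceil$ with $k=|\mathcal{M}(S_n)|$. This is the main obstacle: the uniform intersection number of $S_n$ is not obviously small. Indeed, many point stabilisers $S_{n-1}$ intersect nontrivially, so $\UIN(S_n)$ is large, and this route is probably unavailable. Instead I would use the criterion of Theorem \ref{thmdiam}(2) directly. For any two nonadjacent vertices $H_1,H_2$, I need a maximal subgroup $m$ containing neither. Two families are available. One is the point stabilisers, $n$ of them, which are mutually conjugate. The other is the conjugates of a second class of maximal subgroups, for example $S_k\times S_{n-k}$ or the transitive imprimitive/primitive ones. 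The key step is to show that a proper subgroup $H\neq 1$ cannot lie in all conjugates of a given maximal subgroup $m$. This holds because the core of $m$ is a normal subgroup of $S_n$ properly contained in $m$, hence equal to $1$ or $A_n$, and it is $A_n$ only when $m=A_n$.

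So, given nonadjacent $H_1,H_2$, I want one conjugacy class $C$ of maximal subgroups in which neither $H_1$ nor $H_2$ covers all members between them. The approach is to count: $|M(H_i)\cap C|$ is small relative to $|C|$, so their union does not exhaust $C$. For the class of point stabilisers, $H$ lies in the stabiliser of $i$ exactly when $H$ fixes $i$. Hence $M(H_1)\cup M(H_2)$ covers every point stabiliser only if every point is fixed by $H_1$ or by $H_2$. When that happens, I switch to the class of $2$-set stabilisers or to $A_n$, using the facts that $A_n$ is the unique index-$2$ subgroup and that $\langle H_1,H_2\rangle\leq m$ for a common maximal subgroup $m$. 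This case analysis, checked for $n=4$ separately, is where I expect the bulk of the work. Once it is complete, every nonadjacent pair is at distance $2$, and Theorem \ref{thmdiam}(2) yields $\diam(\mathcal{G}(S_n))=2$.
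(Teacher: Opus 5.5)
Your route differs from the paper's, and the difference is to your credit. You check the criterion of Theorem \ref{thmdiam}(2) directly. The paper instead combines $\IN(S_n)\le 3$ with Proposition \ref{uin-diam}(ii), but that proposition needs $\UIN(S_n)\le\lceil\frac{k+1}{2}\rceil$, and a bound on $\IN$ does not give this. So bypassing that route is sensible, though not for the reason you give: only the size of $\UIN(S_n)$ relative to $k$ matters, and for instance $\UIN(S_4)=5=\lceil 9/2\rceil$. Your treatment of $n=3$ and your lower bound for $n\ge 4$ are fine.

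The gap is that you stop exactly at the decisive step. The disjoint-support case really occurs. For example, $H_1=\langle(1\,2)\rangle$ and $H_2=\langle(3\,4)\rangle$ each lie in $n-2$ of the $n$ point stabilisers and together cover all of them, so your counting heuristic fails. For this case you only announce a ``case analysis''. Your proposed ``key step'' does not settle it:
\begin{itemize}
\item It is false for $n=4$: $V_4\trianglelefteq S_4$ is the core of each $D_8$, so $\langle(1\,2)(3\,4)\rangle$ lies in all three conjugates.
\item In any case, knowing that neither $H_i$ lies in every member of a conjugacy class says nothing about whether $H_1$ and $H_2$ jointly cover that class.
\end{itemize}

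The missing argument is short. In the disjoint-support case, pick $a\in\operatorname{supp}(H_1)$ and $b\in\operatorname{supp}(H_2)$. Since $H_1$ fixes $b$ but moves $a$, it does not stabilise $\{a,b\}$ setwise; symmetrically for $H_2$.
\begin{itemize}
\item For $n\ge 5$, the setwise stabiliser of $\{a,b\}$ is $S_2\times S_{n-2}$, which is maximal because $2\ne n-2$. So it lies outside $M(H_1)\cup M(H_2)$.
\item For $n=4$ that subgroup is not maximal. However, two disjoint nonempty supports in $\{1,2,3,4\}$ must both have size $2$, which forces $H_1=\langle(a\,b)\rangle$ and $H_2=\langle(c\,d)\rangle$. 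Then $A_4$ contains neither.
\end{itemize}
In the other case, a point moved by both $H_i$ has a stabiliser containing neither. Together these give $M(H_1)\cup M(H_2)\ne\mathcal{M}(S_n)$ for every pair of vertices, and Theorem \ref{thmdiam}(2) yields $\diam(\mathcal{G}(S_n))=2$. With this filled in, your route is a complete elementary proof that needs no intersection-number bounds at all.
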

\begin{proof}
	Let $n=3$. Then $S_n\cong \mathbb{Z}_3\rtimes \mathbb{Z}_2$. By Theorem \ref{lmcom}, $\diam(\mathcal{G}(G))=1$. Let $n\geq 4$. By \cite[Theorem 1]{burn}, $\IN(S_n)\leq 3$. Since for $n\geq 4$, $\vert \mathcal{M}(G)\vert\geq 8$ and
	\begin{equation*}
		\IN(S_n)\leq 3< \left\lceil \frac{k+1}{2}\right\rceil,
	\end{equation*}
	where $k=\vert \mathcal{M}(G)\vert$. One can observe that for $n\geq 4$, $S_n$ is not isomorphic to any of the groups mentioned in Theorem \ref{lmcom}. Therefore, from Proposition \ref{uin-diam}, we have $\diam(\mathcal{G}(S_n))=2$.
\end{proof}

In the next result, we show that the diameter of the subgroup based generating graph of a finitely generated free abelian group is two. 
\begin{theorem}
Let $G$ be a  finitely generated free abelian group. Then $\diam(\mathcal{G}(G))=2$. 
\end{theorem}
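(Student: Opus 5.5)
The plan is to apply Theorem~\ref{thmdiam} to $G\cong\mathbb{Z}^n$ with $n\geq 1$ (for $n=0$ the graph is empty, so that case is excluded). I would first record the basic facts. The maximal subgroups of $\mathbb{Z}^n$ are exactly the kernels of surjective homomorphisms $\mathbb{Z}^n\to\mathbb{Z}_p$, for $p$ prime, so in particular $p\mathbb{Z}\times\mathbb{Z}^{n-1}$ is maximal for every prime $p$. Hence $|\mathcal{M}(G)|$ is infinite, and certainly $|\mathcal{M}(G)|\geq 2$.

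Next I would show $\Phi(G)=0$. Given a nonzero $v\in\mathbb{Z}^n$, pick a coordinate functional $\pi_i$ with $\pi_i(v)\neq 0$, and a prime $p\nmid \pi_i(v)$. Then $\ker(\pi_i \bmod p)$ is a maximal subgroup not containing $v$. So the vertices of $\mathcal{G}(G)$ are all the nontrivial proper subgroups.

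To rule out diameter $1$, I would check that $|V(\mathcal{G}(G))\setminus\mathcal{M}(G)|\geq 2$. For instance, $4\mathbb{Z}\times\mathbb{Z}^{n-1}$ and $6\mathbb{Z}\times\mathbb{Z}^{n-1}$ are proper, non-maximal and nontrivial. By Theorem~\ref{thmdiam}(1) this gives diameter at least $2$. By Theorem~\ref{thmdiam}(2), it then remains to show that for every pair of nonadjacent vertices $S_1,S_2$ we have $M(S_1)\cup M(S_2)\subsetneq\mathcal{M}(G)$. In fact I would prove this for any two vertices, by producing one maximal subgroup containing neither.

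The key step runs as follows. Choose nonzero $v_1\in S_1$ and $v_2\in S_2$. Find a homomorphism $f:\mathbb{Z}^n\to\mathbb{Z}$ with $f(v_1)\neq 0$ and $f(v_2)\neq 0$. For example, take $f=\sum_i c_i\pi_i$ with integer $c_i$ chosen to avoid the two proper sublattices $\{c: c\cdot v_1=0\}$ and $\{c: c\cdot v_2=0\}$; this is possible since $\mathbb{Z}^n$ is not a union of two proper subgroups. Then pick a prime $p$ with $p\nmid f(v_1)f(v_2)$. The reduction $\bar f:\mathbb{Z}^n\to\mathbb{Z}_p$ is nonzero, because $\bar f(v_1)\neq 0$, and hence surjective. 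Therefore $m=\ker\bar f$ is maximal, and $v_1,v_2\notin m$, so $m\notin M(S_1)\cup M(S_2)$. Theorem~\ref{thmdiam}(2) then gives $\diam(\mathcal{G}(G))=2$.

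The only delicate point is this construction of a common "avoiding" maximal subgroup. It is where infinitely many primes, and hence the free-abelian hypothesis, is used, in contrast with the finite cyclic case where diameter $3$ can occur.
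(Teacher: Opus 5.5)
Your argument is correct, but it produces the common neighbour differently from the paper. The paper first dispatches rank one by appeal to Theorem~\ref{thmdiam}. It then picks nonzero $x\in S_1$, $y\in S_2$ and builds an explicit, generally non-maximal, sublattice adjacent to both $(x)$ and $(y)$, with a case split. If $x_jy_j\neq 0$ for some $j$, it replaces $e_j$ by $(5x_jy_j+1)e_j$. If the supports are disjoint ($x_i\neq 0$, $y_j\neq 0$), it replaces $e_i,e_j$ by $e_i+x_iy_je_j$ and $e_j+x_iy_je_i$, proves properness from $\det=1-(x_iy_j)^2\neq\pm1$, and leaves the adjacency itself to the reader. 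So the paper exhibits paths of length two directly rather than verifying the criterion of Theorem~\ref{thmdiam}(2).

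You instead produce a \emph{maximal} subgroup avoiding both vertices, which is exactly that criterion. You take one integer functional nonzero on $v_1$ and $v_2$ (possible since a group is not a union of two proper subgroups) and reduce it modulo a prime not dividing $f(v_1)f(v_2)$. This gives a uniform argument with no case split and no determinant or gcd bookkeeping, and it covers $n=1$ in the same stroke. You also check explicitly that $\Phi(G)=0$ and that $\mathcal{G}(G)$ is not complete, both of which the paper leaves implicit. Your device is essentially the one the paper uses for finite abelian $p$-groups in the proof of Theorem~\ref{finiteabedia} (kernels of $a_r$ or $a_r+a_s \bmod p$), with the freedom to choose the prime replacing the fixed $p$. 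In exchange, the paper's version writes the common neighbour down by explicit generators, without passing through the description of the maximal subgroups of $\mathbb{Z}^n$.
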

\begin{proof}
Let $S_1$, $S_2\in V(\mathcal{G}(G))$ be two nonadjacent vertices. Since $G$ is a finitely generated free abelian group, $G\cong \bigoplus_{i\in \Lambda}\mathbb{Z}$, for some finite index set \(\Lambda\). If $|\Lambda|=1$, then it follows from Theorem~\ref{thmdiam} that $\diam(\mathcal{G}(G))=2$. Hence, assume that $|\Lambda|=t\geq 2$, and let \(\{e_i\}_{i\in\Lambda}\) denote the standard basis of $G$.
Let $x=(x_1,x_2,\dots,x_t)\in S_1$ and $y=(y_1, y_2,\dots,y_t)\in S_2$ be two nonzero elements of $G$. If $x_j\not=0$ and $y_j\not=0$ for some $j\in [t]$, then $(x)\sim m\sim (y)$, where $m=\langle e_1, e_2,\dots,e_{j-1},(5x_jy_j+1)e_j, e_{j+1},\dots, e_t\rangle \not=G$. If not, then there must exist $i,j\in [t]$ such that $x_i\not=0$ and $y_j\not=0$. Define $m:=\langle e_1,e_2, \dots ,e_{i-1},e_i+x_iy_je_j,e_{i+1},\dots, e_{j-1}, e_j+x_iy_je_i,e_{j+1},\dots, e_t\rangle $.  We next show that $m \neq G$.  Let \(A\) be the \(t\times t\) integer matrix whose columns are the  generators of \(m\). Then the matrix \(A\) differs from the identity matrix only in the \(i\)-th and \(j\)-th coordinates.  Thus $\det(A)=1-(x_i y_j)^2$. Since \(x_i y_j\neq0\), it follows that \(\det(A)\neq\pm1\).  Now, suppose, for contradiction, that \(m=G\). Then the columns of \(A\) generate $\mathbb{Z}^t$ , and hence \(A\in \operatorname{GL}_t(\mathbb{Z})\). Consequently, \(\det(A)=\pm1\), a contradiction. Thus $m \neq G$ and one can easily observe that $(x)\sim m\sim (y)$. Hence $\diam(\mathcal{G}(G))=2$.


\end{proof}
We now prove Theorem \ref{finiteabedia},  which classifies finite abelian groups in terms of the diameter of their subgroup based generating graphs.
%
\begin{proof}[Proof of Theorem \ref{finiteabedia}]
    Let $G$ be an abelian group of order $n=\prod_{i=1}^kp_i^{t_i}$. Then $G\cong \prod_{i=1}^kS_i$, where each $S_i$ is the Sylow subgroup of order $p_i^{t_i}$. By the fundamental theorem of finite abelian groups, each \(S_i\) can be written as
   \[
    S_i\cong \prod_{j=1}^{n_i}\mathbb Z_{p_i^{r_{ij}}}, \quad \text{such that}
    \quad
    \sum_{j=1}^{n_i}r_{ij}=t_i.
    \] 
    Moreover, every subgroup $H$ of $G$ decomposes as $H=\prod_{i=1}^kH_i$, where $H_i$ is a subgroup of $S_i$ for each \(i\). We divide the proof into the following three cases.
    
 \noindent\textbf{Case 1.} $k\geq 3$: Let $H_1=S_1\times 0\times\cdots\times 0$
 and
  $H_2=0\times S_2\times 0\times\cdots\times 0$.  Then \(H_1,H_2\in V(\mathcal{G}(G))\). Since $H_1\nsim H_2$, we have  $\di(H_1,H_2)\geq 2$. We next show that $\di(H_1,H_2)\geq 3$.  Suppose, for contradiction, that there exists a proper subgroup $T=\prod_{i=1}^kT_i$ of $G$ such that $H_1\sim T\sim H_2$, where $T_i$ is a subgroup of $G_i$ for $i\in [k]$. Since $\langle H_1,T\rangle=G$, we must have $T_i=S_i$ for all $i\in \{2,\dots,k\}$. Similarly, from $\langle T,H_2\rangle=G$, it follows that $T_1=S_1$. Thus $T=G$, contradicting the assumption that \(T\) is a proper subgroup of \(G\). Hence $\di(S_1, S_2)\geq 3$. By Theorem \ref{thmdiam}, we have $\diam(\mathcal{G}(G))=3$.\medskip

\noindent\textbf{Case 2.} $k=2$: In this case, $G\cong S_1\times S_2$, where $S_1=\prod_{j=1}^{n_1}\mathbb{Z}_{{p_1}^{t_{1j}}}$ and $S_2=\prod_{j=1}^{n_2}\mathbb{Z}_{{p_2}^{t_{2j}}}$. First, suppose that $n_1=n_2=1$.  If $t_{11}=t_{21}=1$, then $G\cong \mathbb{Z}_{p_1}\times \mathbb{Z}_{p_2}$.  By Theorem \ref{lmcom}, $\diam(\mathcal{G}(G))=1$.

 Now suppose that \(t_{i1}\geq 2\) for some \(i\in[2]\). Then $\mathcal{M}(G)=\{m_1=\langle p_1\rangle \times \mathbb{Z}_{p_2}^{t_{21}}, m_2=\mathbb{Z}_{p_1}^{t_{11}}\times \langle p_2\rangle  \}$.  Therefore, $\mathcal{G(G)}$ is a complete bipartite graph with bipartition $V_1=\{H\in V(\mathcal{G}(G)): H\leq m_1,\ H\nleq m_2\}$ and $V_2=\{H\in V(\mathcal{G}(G)): H\leq m_2,\ H\nleq m_1\}$. Hence $\diam(\mathcal{G}(G))=2$.\smallskip
    
 Finally, suppose that \(n_i\geq 2\) for some \(i\in[2]\). Without loss of generality, assume that $n_1\geq 2$. Let $H_1=T_1\times 0$ and $H_2=0\times S_2$ be vertices of $V(\mathcal{G}(G))$, where $T_1$ is a proper subgroup of $S_1$. Since $H_1\nsim H_2$, $\di(H_1,H_2)\geq 2$. Suppose, for contradiction, that $\di(H_1,H_2)=2$. Then there exists $L=L_1\times L_2\in V(\mathcal{G}(G))$ such that $H_1\sim L\sim H_2$, where $L_i$ is a subgroup of $S_i$. Since $H_1\sim L$, we have $L_2=S_2$. On the other hand, $H_2\sim L$ implies $L_1=S_1$. Therefore, $L=S_1\times S_2=G$, a contradiction. Consequently, $\di(H_1,H_2)\geq 3$. Since $\diam(\mathcal{G}(G))\leq 3$, we have $\diam(\mathcal{G}(G))=3$. 
\medskip

\noindent\textbf{Case 3.} $k=1$: In this case  $G\cong \prod_{i=1}^{n_1}\mathbb{Z}_{p^{t_{1i}}}$ and the maximal subgroups are of the form $m_i=\mathbb{Z}_{p^{t_{11}}}\times \dots \times \langle p\rangle \times \dots  \mathbb{Z}_{p^{t_{1{n_1}}}}$. Therefore, $\Phi(G)=\langle p\rangle\times \langle p\rangle\times \dots \times \langle p\rangle$.  Now, if $n_1=1$, then \(G\) has a unique maximal subgroup, and hence $\mathcal{G}(G)$ is a null graph. Thus, assume that $n_1\geq 2$. \smallskip 

If  $n_1=2$ and  $t_{11}=t_{12}=1$, then $G\cong \mathbb Z_p\times\mathbb Z_p$ and Theorem \ref{lmcom} gives $\diam(\mathcal{G}(G))=1$. \smallskip 

Now suppose that either \(n_1\geq3\), or \(n_1=2\) and \(t_{11}t_{12}\neq1\). By Theorem~\ref{lmcom}, $\diam(\mathcal{G}(G))\geq2$. Let $S_1, S_2\in V(\mathcal{G}(G))$ be such that $S_1\nsim S_2$.  Since \(S_1,S_2\nsubseteq\Phi(G)\), choose $x=(x_1,\ldots,x_{n_1})\in S_1\setminus\Phi(G)$ and $y=(y_1,\ldots,y_{n_1})\in S_2\setminus\Phi(G).$ Thus, at least one coordinate of each of \(x\) and \(y\) is not divisible by \(p\).  If there exists an index \(r\) such that
$ p\nmid x_r$ and $p\nmid y_r,$ define $ \varphi:G\longrightarrow\mathbb Z_p $ by $\varphi((a_i)):=a_r\pmod p.$ Then
$
\varphi(x)\neq 0$ and $\varphi(y)\neq 0$.
Otherwise, choose indices \(r\) and \(s\) such that $ p\nmid x_r$ and $p\nmid y_s,$. Since no coordinate is simultaneously nonzero modulo \(p\) for both \(x\) and \(y\), we have $ p\mid y_r$ and $p\mid x_s$. Define $ \varphi :G\longrightarrow\mathbb Z_p $ by $\varphi((a_i))=a_r+a_s\pmod p.$ Then again
$
\varphi(x)\neq 0$ and $\varphi(y)\neq 0$. In either case, let
$
m=\ker\varphi.$  Since \(\varphi\) is surjective onto \(\mathbb Z_p\), \(m\) is a maximal subgroup of \(G\). Moreover, $S_1\nsubseteq m$ and 
$S_2\nsubseteq m$. Thus $S_1\sim m\sim S_2.$ Therefore every pair of nonadjacent vertices of \(\mathcal{G}(G)\) is at distance \(2\), and so $\diam(\mathcal{G}(G))=2$.  
\end{proof}

\subsection{Complete multipartite subgroup based generating graph}

In this subsection, we determine the girth of the subgroup-based generating graph of a finitely generated group and characterize those groups for which this graph is complete multipartite. We also prove that, for a finite group \(G\), the subgroup-based generating graph \(\mathcal{G}(G)\) is complete multipartite if and only if both the intersection number and the uniform intersection number of \(G\) are equal to \(2\).

We begin by determining the girth of the subgroup-based generating graph of a finitely generated group.
\begin{proposition}\label{propgirthfgc}
    Let $G$ be a finitely generated group. Then 
   \begin{align*}
\gr (\mathcal{G}(G))=\begin{cases}$3$~ &\text{if $|\mathcal{M}(G)|\geq 3$,}~\\ $4$~ &\text{if $|\mathcal{M}(G)|=2$ with for each $m\in \mathcal{M}(G)$ there exists a subgroup $v\subsetneq m$ of $G$}~\\ &\text{such that $v\nsubseteq \Phi(G)$},  
~\\ 
\infty~ &\text{otherwise}.~
\end{cases}
\end{align*} 
\end{proposition}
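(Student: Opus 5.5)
The plan is to split into three cases according to $|\mathcal{M}(G)|$, using Lemma~\ref{lmiff} throughout: two vertices are adjacent exactly when no maximal subgroup contains both.

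\textbf{Case $|\mathcal{M}(G)|\geq 3$.} Pick three distinct maximal subgroups $m_1,m_2,m_3$. Each is a nontrivial proper subgroup not contained in $\Phi(G)$; otherwise $m_i=\Phi(G)$, so $m_i$ would lie in every other maximal subgroup, and maximality would force equality. Hence each $m_i$ is a vertex. Distinct maximal subgroups generate $G$, so $m_1\sim m_2\sim m_3\sim m_1$ is a triangle and $\gr(\mathcal{G}(G))=3$. One should also note that $m_i\neq\{1\}$: if some maximal subgroup were trivial, $G$ would have prime order and hence a unique maximal subgroup, which is excluded.

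\textbf{Case $|\mathcal{M}(G)|=2$, with $\mathcal{M}(G)=\{m_1,m_2\}$.} Each vertex $S$ lies in at most one of $m_1,m_2$, since lying in both would put $S$ inside $\Phi(G)=m_1\cap m_2$. It also lies in at least one, because $G$ is finitely generated and $S$ is proper. So the vertex set splits as $V_1\sqcup V_2$, where $V_i=\{S: S\le m_i\}$. By Lemma~\ref{lmiff}, vertices within the same $V_i$ are nonadjacent, and vertices in different parts are adjacent. Thus $\mathcal{G}(G)$ is complete bipartite $K_{|V_1|,|V_2|}$, and in particular has no triangle.

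A complete bipartite graph contains a cycle (necessarily a $4$-cycle) if and only if both parts have size at least $2$. Now $V_i$ always contains $m_i$, and it contains a second vertex exactly when some subgroup $v\subsetneq m_i$ satisfies $v\nsubseteq\Phi(G)$; such a $v$ is automatically nontrivial and proper. This is precisely the condition in the middle case, so in that case the girth is $4$. Otherwise some part is a single vertex, the graph is a star, and the girth is $\infty$.

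\textbf{Case $|\mathcal{M}(G)|\le 1$.} Here $\Phi(G)$ is the unique maximal subgroup, or $G$ is trivial. Every proper subgroup then lies in $\Phi(G)$, so the graph is empty and the girth is $\infty$.

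The only delicate point is the bookkeeping in the two-maximal case: confirming that each $V_i$ contains $m_i$ and exactly matching the stated condition to $|V_i|\ge2$. The rest is immediate from Lemma~\ref{lmiff}.
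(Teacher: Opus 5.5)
Your proof is correct and follows essentially the same route as the paper. When $|\mathcal{M}(G)|\ge 3$, three maximal subgroups give a triangle; when $|\mathcal{M}(G)|=2$, the vertices split according to which maximal subgroup contains them. The paper exhibits the $4$-cycle and excludes triangles directly, whereas you identify the graph as $K_{|V_1|,|V_2|}$, which also justifies the paper's unargued ``star'' claim. You additionally check that maximal subgroups are vertices and treat $|\mathcal{M}(G)|\le 1$ explicitly.
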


\begin{proof}
Suppose that \(|\mathcal{M}(G)|\geq 3\), and choose three distinct maximal subgroups \(m_1,m_2,m_3\in\mathcal{M}(G)\). Since any two distinct maximal subgroups of \(G\) generate \(G\),  $m_1\sim m_2\sim m_3\sim m_1$ forms a triangle inn $\mathcal{G}(G)$. Thus $\gr(\mathcal{G}(G))=3$.\smallskip

Now, suppose that $\mathcal{M}(G)=\{m_1,m_2\}$ and there exist subgroups $S_1, S_2\in V(\mathcal{G}(G)) \setminus \mathcal{M}(G)$ such that $M(S_i)=\{m_i\}$ for $i\in[2]$. Then $S_1\sim m_2\sim m_1\sim S_2\sim S_1$ forms a $4$-cycle in $\mathcal{G}(G)$. We claim that \(\mathcal{G}(G)\) contains no triangle. Suppose, to the contrary, that $H_1\sim H_2\sim H_3\sim H_1$ is a triangle. Since $H_1\sim H_2$, $M(H_1)\cap M(H_2)=\emptyset$. As \(\mathcal{M}(G)=\{m_1,m_2\}\), we may assume, without loss of generality, that $M(H_1)=\{m_1\}$ and $M(H_2)=\{m_2\}$. Again, since $H_1\sim H_3$ and $H_2\sim H_3$, we must have $M(H_1)\cap M(H_3)=\emptyset$ and $M(H_2)\cap M(H_3)=\emptyset$. Thus $M(H_3)=\emptyset$, which is impossible for a vertex corresponding to a proper subgroup of the finitely generated group \(G\). Therefore, \(\mathcal{G}(G)\) contains no triangle and we conclude that $\gr (\mathcal{G}(G))=4$.
 
Finally, suppose that, for some \(i\in[2]\), \(m_i\) is the only vertex \(S\in V(\mathcal{G}(G))\) satisfying $M(S)=\{m_i\}$.  Then \(\mathcal{G}(G)\) is a star graph, and hence $\gr(\mathcal{G}(G))=\infty$.
\end{proof}
As an immediate consequence of the preceding result, we obtain the following characterization of finitely generated groups \(G\) for which $\mathcal{G}(G)$ is a tree. In particular, $\mathcal{G}(G)$ is a tree if and only if it is a star.

\begin{corollary}\label{cortreestar}
Let $G$ be a finitely generated group such that $\mathcal{G}(G)$ is nontrivial. Then the following statements are equivalent.
\begin{enumerate}[{(i)}]
    \item $\mathcal{G}(G)$ is a tree.
\item $\mathcal{G}(G)$ is a star.
\item $|\mathcal{M}(G)|=2$ and $G$ has a maximal subgroup $m$ such that every proper subgroup of $m$ is contained in $\Phi(G)$. 
\end{enumerate}
\end{corollary}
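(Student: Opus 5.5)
The plan is to prove the cycle of implications (i)$\Rightarrow$(ii)$\Rightarrow$(iii)$\Rightarrow$(i), using Proposition \ref{propgirthfgc} and Lemma \ref{lmiff} throughout. Since $\mathcal{G}(G)$ is nontrivial, Proposition \ref{propdiam} gives $|\mathcal{M}(G)|\geq 2$, and $\mathcal{G}(G)$ is connected.

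\textbf{(i)$\Rightarrow$(iii).} If $\mathcal{G}(G)$ is a tree, it has no cycles, so $\gr(\mathcal{G}(G))=\infty$. By Proposition \ref{propgirthfgc}, $|\mathcal{M}(G)|\geq 3$ would force girth $3$, so $|\mathcal{M}(G)|=2$. Write $\mathcal{M}(G)=\{m_1,m_2\}$.

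Still by Proposition \ref{propgirthfgc}, we are not in the girth-$4$ case. Hence for some $i$, every subgroup $v\subsetneq m_i$ satisfies $v\subseteq\Phi(G)$. This is exactly the condition that every proper subgroup of $m_i$ lies in $\Phi(G)$.

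\textbf{(iii)$\Rightarrow$(ii).} Take $\mathcal{M}(G)=\{m_1,m_2\}$ with every proper subgroup of $m_1$ contained in $\Phi(G)$. Every vertex $S$ has $M(S)$ equal to $\{m_1\}$ or $\{m_2\}$: the set is nonempty since $G$ is finitely generated, and it cannot be both, since $m_1\cap m_2=\Phi(G)$ and $S\nsubseteq\Phi(G)$.

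If $M(S)=\{m_1\}$, then $S\leq m_1$ and $S\nsubseteq\Phi(G)$, so by hypothesis $S$ is not a proper subgroup of $m_1$, i.e.\ $S=m_1$. Thus every vertex other than $m_1$ has $M(S)=\{m_2\}$.

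By Lemma \ref{lmiff}, any two vertices with $M=\{m_2\}$ are nonadjacent, and each of them is adjacent to $m_1$. Hence $\mathcal{G}(G)$ is a star centered at $m_1$.

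\textbf{(ii)$\Rightarrow$(i).} A star is a tree, so this is trivial.

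The only delicate point is the bookkeeping for the vertices with $M(S)=\{m_1\}$, which is handled by the hypothesis together with the identity $m_1\cap m_2=\Phi(G)$; this is the step I would check most carefully.
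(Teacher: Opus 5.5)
Your proof is correct and follows essentially the paper's route. The paper treats this corollary as an immediate consequence of Proposition~\ref{propgirthfgc}, using the girth trichotomy to force $|\mathcal{M}(G)|=2$ and the failure of the girth-$4$ condition, exactly as in your (i)$\Rightarrow$(iii); your (iii)$\Rightarrow$(ii) argument via Lemma~\ref{lmiff} and $m_1\cap m_2=\Phi(G)$ simply spells out the star structure that the paper asserts without detail in the last case of that proposition.
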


With the above analysis on hand, we now prove Theorem \ref{corfggirth} -- classification of the family of finite groups in terms of the girth of the subgroup based generating graph. The proof relies on the following basic result.
\begin{lemma}\cite[Lemma $3.2$]{wei}\label{lmm2}
	Let $G$ be a finite group. Then $G$ has exactly two maximal subgroups if and only if $G$ is cyclic and $|G|$ is divisible by exactly two distinct primes.
\end{lemma}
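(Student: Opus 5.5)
The plan is to prove the two directions separately, using the standard fact that a group is never the union of two of its proper subgroups, together with the description of the subgroup lattice of a finite cyclic group.

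\emph{Forward direction.} Suppose $\mathcal{M}(G)=\{m_1,m_2\}$. First I will show that $G\neq m_1\cup m_2$. If $m_1\subseteq m_2$ or $m_2\subseteq m_1$, this is immediate, since the union is then a single proper subgroup. Otherwise, pick $a\in m_1\setminus m_2$ and $b\in m_2\setminus m_1$. The product $ab$ cannot lie in $m_1$, since that would force $b\in m_1$. Similarly, $ab$ cannot lie in $m_2$, since that would force $a\in m_2$.

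So there exists $g\in G\setminus(m_1\cup m_2)$. Since $G$ is finite, the subgroup $\langle g\rangle$ is either $G$ or is contained in some maximal subgroup. The latter is impossible because $g\notin m_1\cup m_2$. Hence $G=\langle g\rangle$ is cyclic.

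Now write $|G|=n=\prod_{i=1}^k p_i^{t_i}$. In $\mathbb{Z}_n$ there is a unique subgroup of each order $d\mid n$, and inclusion of subgroups corresponds to divisibility of orders. Therefore the maximal subgroups are exactly the subgroups of index $p_i$, one for each $i\in[k]$, so $|\mathcal{M}(G)|=k$. From $|\mathcal{M}(G)|=2$ we get $k=2$, that is, $|G|$ is divisible by exactly two distinct primes.

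\emph{Converse.} If $G\cong\mathbb{Z}_n$ with $n=p_1^{t_1}p_2^{t_2}$, the same lattice description shows that the maximal subgroups are exactly the two subgroups of index $p_1$ and $p_2$. Hence $|\mathcal{M}(G)|=2$.

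The only step requiring any care is the covering argument $G\neq m_1\cup m_2$; the rest is the routine correspondence between subgroups of a cyclic group and divisors of its order. One could also cite the count $|\mathcal{M}(\mathbb{Z}_n)|=k$ that was already used in the proof of Corollary~\ref{uid-cyclic}.
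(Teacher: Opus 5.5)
Your proof is correct and complete. The paper gives no argument of its own; it imports the lemma from \cite{wei}, so there is no internal proof to compare against.

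What you give is the standard self-contained argument. No group is the union of two proper subgroups, so some $g$ lies outside both maximal subgroups. Since every proper subgroup of a finite group lies in a maximal one, $\langle g\rangle=G$. The count $|\mathcal{M}(\mathbb{Z}_n)|=k$ then settles both directions. The nested case $m_1\subseteq m_2$ you treat is actually vacuous, since distinct maximal subgroups are never comparable, but handling it does no harm.
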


\begin{proof}[Proof of Theorem \ref{corfggirth}]
 If $|\mathcal{M}(G)|\geq 3$, then Proposition \ref{propgirthfgc} yields $\gr(\mathcal{G}(G))=3$. Hence, assume that $|\mathcal{M}(G)|=2$. By Lemma \ref{lmm2}, we have $G\cong \mathbb{Z}_{{p_1^{t_1}}{p_2^{t_2}}}$. If $t_1$, $t_2\geq 2$, then $\langle p_1^2\rangle\sim \langle p_2\rangle\sim \langle p_1\rangle\sim \langle p_2^2\rangle\sim \langle p_1^2\rangle$ is a $4$-cycle in $\mathcal{G}(G)$. On the other hand, Proposition~\ref{propgirthfgc} implies that $\mathcal{G}(G)$ contains no triangles. Therefore, $\gr(\mathcal{G}(G)=4$.
 
  Finally, suppose that either $t_1=1$ or $t_2=1$. Without loss of generality, let $t_1=1$. Then $\mathcal{G}(G)\cong K_{1,|t_2|}$ and hence $\gr(\mathcal{G}(G))=\infty$. Combining all of the above cases, we have the desired result.
\end{proof}
As immediate consequences of Theorem~\ref{corfggirth}, we obtain the following two results, which classify, up to isomorphism, the groups \(G\) for which $\mathcal{G}(G)$ is a tree or a bipartite graph.
\begin{corollary}
	Let $G$ be a finite group. Then $\mathcal{G}(G)$ is a tree if and only if $G\cong \mathbb{Z}_{p_1^kp_2}$, where \(p_1\) and \(p_2\) are distinct primes and $k\in \mathbb{N}$. 
\end{corollary}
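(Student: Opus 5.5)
The plan is to derive this from Corollary \ref{cortreestar} and Theorem \ref{corfggirth}, using Lemma \ref{lmm2} to pin down the groups with exactly two maximal subgroups.

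For the forward direction, suppose $\mathcal{G}(G)$ is a tree. A tree contains no cycle, so $\gr(\mathcal{G}(G))=\infty$. Theorem \ref{corfggirth} then forces $G\cong \mathbb{Z}_{p_1^{t_1}p_2^{t_2}}$ with $t_1=1$ or $t_2=1$. After relabelling the primes, $G\cong \mathbb{Z}_{p_1^kp_2}$ with $k\in\mathbb{N}$. One can reach the same conclusion through Corollary \ref{cortreestar}: it gives $|\mathcal{M}(G)|=2$, and Lemma \ref{lmm2} then makes $G$ cyclic of order $p_1^{t_1}p_2^{t_2}$.

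For the converse, let $G=\mathbb{Z}_{p_1^kp_2}$. Its maximal subgroups are $m_1=\langle p_1\rangle$ and $m_2=\langle p_2\rangle$, and $\Phi(G)=\langle p_1p_2\rangle$. I will verify condition (iii) of Corollary \ref{cortreestar} directly, using $m_1$. The group $m_1$ is cyclic of order $p_1^{k-1}p_2$. Every proper subgroup of $m_1$ has the form $\langle p_1^ip_2^j\rangle$ with $i\geq 1$, excluding $i=1,j=0$. If $j=1$, the subgroup lies in $\langle p_1p_2\rangle=\Phi(G)$. If $j=0$ and $i\geq 2$, then $\langle p_1^i\rangle\leq\langle p_1^2\rangle$; since $p_2$ divides the index, this is contained in $\langle p_1p_2\rangle$ only if $p_2$ divides the order exponent appropriately.

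That last step needs care. The subgroup $\langle p_1^2\rangle$ has order $p_1^{k-2}p_2$, which is divisible by $p_2$, so it is not contained in $\Phi(G)$. Hence $m_1$ is the wrong choice, and I should use $m_2=\langle p_2\rangle$ instead. This subgroup is cyclic of order $p_1^k$. Its subgroups form a chain, and the unique maximal proper one is $\langle p_1p_2\rangle=\Phi(G)$. Therefore every proper subgroup of $m_2$ lies in $\Phi(G)$. Condition (iii) holds, so Corollary \ref{cortreestar} shows $\mathcal{G}(G)$ is a star and in particular a tree.

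Alternatively, the converse follows at once from the proof of Theorem \ref{corfggirth}, which shows $\mathcal{G}(G)\cong K_{1,t}$ in this case. The only real obstacle is choosing the maximal subgroup of prime-power order in the converse; nothing deeper is needed.
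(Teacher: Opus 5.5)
Your proof is correct and follows the paper's route. The paper reads this corollary off Theorem~\ref{corfggirth}: a tree has infinite girth, and a nonempty $\mathcal{G}(G)$ forces $|\mathcal{M}(G)|\geq 2$, so that theorem's nontriviality hypothesis is met. The converse is the star $K_{1,k}$ centred at $\langle p_2\rangle$, which your check of condition (iii) of Corollary~\ref{cortreestar} with $m_2=\langle p_2\rangle$ correctly confirms.

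Only cosmetic fixes are needed. Drop the abandoned attempt with $m_1=\langle p_1\rangle$. Either delete or complete the aside via Corollary~\ref{cortreestar} and Lemma~\ref{lmm2}: $|\mathcal{M}(G)|=2$ alone does not force an exponent to equal $1$. You also need the subgroup condition in (iii), which fails for $\langle p_1\rangle\leq\mathbb{Z}_{p_1^{t_1}p_2^{t_2}}$ exactly when $t_1\geq 2$, because then $\langle p_1^2\rangle\nleq\Phi(G)$.
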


\begin{corollary}
    Let $G$ be a finite group. Then the following statements are equivalent.
    \begin{enumerate}[(i)]
    	\item $\mathcal{G}(G)$ is a bipartite graph.
    	\item  $\mathcal{G}(G)$ is a triangle-free graph.
    	\item $G\cong \mathbb{Z}_{p_1^{t_1}p_2^{t_2}}$, $p_1$ and $p_2$ are distinct primes and $t_1,t_2\in \mathbb{N}$.
    \end{enumerate}
\end{corollary}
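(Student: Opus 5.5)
We prove the cycle of implications (i)$\Rightarrow$(ii)$\Rightarrow$(iii)$\Rightarrow$(i) for a finite group $G$; throughout we may assume $\mathcal{G}(G)$ is nontrivial, so that $|\mathcal{M}(G)|\geq 2$ by Proposition~\ref{propdiam}.

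\emph{(i)$\Rightarrow$(ii).} A bipartite graph contains no odd cycle, so in particular it has no triangle.

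\emph{(ii)$\Rightarrow$(iii).} If $\mathcal{G}(G)$ is triangle-free, then $\gr(\mathcal{G}(G))\neq 3$. By Proposition~\ref{propgirthfgc}, any three distinct maximal subgroups form a triangle, so $|\mathcal{M}(G)|=2$. Lemma~\ref{lmm2} then gives $G\cong \mathbb{Z}_{p_1^{t_1}p_2^{t_2}}$. Equivalently, we can read this off from Theorem~\ref{corfggirth}, since girth $3$ occurs exactly outside the cyclic family $\mathbb{Z}_{p_1^{t_1}p_2^{t_2}}$.

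\emph{(iii)$\Rightarrow$(i).} Let $G\cong\mathbb{Z}_{p_1^{t_1}p_2^{t_2}}$. Then $\mathcal{M}(G)=\{m_1,m_2\}$ with $m_1=\langle p_1\rangle$ and $m_2=\langle p_2\rangle$. A vertex $H$ is not contained in $\Phi(G)=m_1\cap m_2$, and it lies in at least one maximal subgroup. Hence $M(H)$ is exactly one of $\{m_1\}$ or $\{m_2\}$. Define
\[
V_i=\{H\in V(\mathcal{G}(G)) : M(H)=\{m_i\}\},\qquad i\in[2].
\]
These two sets partition $V(\mathcal{G}(G))$. By Lemma~\ref{lmiff}, $H\sim K$ if and only if $M(H)\cap M(K)=\emptyset$. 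So two vertices in the same $V_i$ are never adjacent, and any vertex of $V_1$ is adjacent to every vertex of $V_2$. Thus $\mathcal{G}(G)$ is complete bipartite with parts $V_1,V_2$, and in particular it is bipartite.

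The only point needing care is that $M(H)$ is nonempty and not all of $\mathcal{M}(G)$. The first holds because $G$ is finite, so every proper subgroup lies in a maximal one; the second holds because $H\nsubseteq\Phi(G)$. I do not expect a genuine obstacle here, since this argument also explains why the star and $4$-cycle structures of Theorem~\ref{corfggirth} both arise.
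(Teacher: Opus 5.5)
Your argument is correct and is essentially the paper's route. The paper presents this as an immediate consequence of Theorem~\ref{corfggirth}: three maximal subgroups force a triangle, and Lemma~\ref{lmm2} then pins down $G$. Bipartiteness of $\mathcal{G}(\mathbb{Z}_{p_1^{t_1}p_2^{t_2}})$ comes from exactly the partition by $M(H)$ that you use (cf.\ Proposition~\ref{propmul}).

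One caveat: your ``we may assume $\mathcal{G}(G)$ is nontrivial'' is not a reduction but an extra hypothesis. For $G=\mathbb{Z}_{p^k}$ the graph has no vertices, so (i) and (ii) hold vacuously while (iii) fails. The statement therefore needs the nontriviality assumption of Theorem~\ref{corfggirth}, which the paper leaves implicit here, and you should state it explicitly rather than assume it.
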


We now turn our attention to proving Theorem \ref{thmcmg}. We first show that, for every finitely generated group $G$, the graph $\mathcal{G}(G)$ is multipartite.

\begin{proposition}\label{propmul}
Let $G$ be a finitely generated group with $2\leq |\mathcal{M}(G)|<\infty$. Then $\mathcal{G}(G)$ is a k-partite graph if and only if $|\mathcal{M}(G)|=k$. 
\end{proposition}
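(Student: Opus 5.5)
The plan is to read ``$k$-partite'' in the sense the statement clearly intends: $k$ is the least number of independent sets (equivalently, nonempty parts) into which $V(\mathcal{G}(G))$ can be partitioned, i.e.\ $\chi(\mathcal{G}(G))=k$. Without minimality the statement is vacuous, since any $k$-partite graph is also $(k+1)$-partite. I would then prove that $\chi(\mathcal{G}(G))=|\mathcal{M}(G)|$, which gives both directions at once. Write $\mathcal{M}(G)=\{m_1,\dots,m_k\}$.

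\emph{Lower bound.} First I check that each $m_i$ is actually a vertex. Since $k\geq 2$, $m_i$ is a nontrivial proper subgroup, and $m_i\nsubseteq \Phi(G)$, because otherwise $m_i\subseteq m_j$ for some $j\neq i$, contradicting maximality. For $i\neq j$, maximality also gives $\langle m_i,m_j\rangle=G$, so $m_i\sim m_j$. Hence the maximal subgroups form a clique of size $k$ (as already used in the proofs of Propositions \ref{propdiam} and \ref{propgirthfgc}), and $\chi(\mathcal{G}(G))\geq k$.

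\emph{Upper bound.} I would partition the vertices explicitly. Since $G$ is finitely generated, every proper subgroup $S$ lies in some maximal subgroup, so $M(S)\neq\emptyset$; this is the Zorn's lemma step used in Lemma \ref{lmiff}. Define
\[
V_i=\{S\in V(\mathcal{G}(G)) : i=\min\{j : S\leq m_j\}\}, \qquad i\in[k].
\]
These sets partition $V(\mathcal{G}(G))$. Each $V_i$ is nonempty because $m_i\in V_i$, since $m_i\nleq m_j$ for $j<i$. If $S_1,S_2\in V_i$, then $m_i\in M(S_1)\cap M(S_2)$, so $S_1\nsim S_2$ by Lemma \ref{lmiff}. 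Thus each $V_i$ is independent, $\mathcal{G}(G)$ is $k$-partite, and $\chi(\mathcal{G}(G))\leq k$.

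Combining the two bounds gives $\chi(\mathcal{G}(G))=|\mathcal{M}(G)|$. So $\mathcal{G}(G)$ is $k$-partite with $k$ minimal exactly when $k=|\mathcal{M}(G)|$. The main obstacle is not mathematical but the precise meaning of ``$k$-partite''; I would state the minimality interpretation explicitly before starting. Beyond that, the only delicate points are that each $m_i$ is a genuine vertex (it avoids $\Phi(G)$) and that $M(S)\neq\emptyset$ for every vertex, both of which need $2\leq|\mathcal{M}(G)|$ and finite generation.
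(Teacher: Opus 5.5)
Your proof is correct and is essentially the paper's argument. The clique formed by the maximal subgroups is the paper's pigeonhole step, and your partition $V_i$ (indexed by the least $j$ with $S\leq m_j$) is exactly the paper's construction for the converse; phrasing both as $\chi(\mathcal{G}(G))=|\mathcal{M}(G)|$ replaces the paper's step of moving vertices out of a part with no maximal subgroup by the explicit colouring, and states openly the minimality convention the paper also relies on (``contradicting the minimality of $k$'').
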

\begin{proof}
Suppose that \(\mathcal{G}(G)\) is a \(k\)-partite graph with partite sets \(V_1,\ldots,V_k\). Assume, for contradiction, that $|\mathcal{M}(G)|\geq k+1$. By the pigeonhole principle, some partite set, say \(V_t\), contains at least two maximal subgroups, say $m_1$ and $m_2$. Since $m_1\sim m_2$ in $\mathcal{G}(G)$, the partite set \(V_t\) is not independent, a contradiction. Thus, $|\mathcal{M}(G)|\leq k$. 

 We next show that $|\mathcal{M}(G)|\geq k$. If not,  then some partite set, say \(V_k\), contains no maximal subgroup. Let \(S\in V_k\). Since \(G\) is finitely generated, \(S\) is contained in a maximal subgroup \(m_j\), which lies in some partite set \(V_j\), \(j\neq k\). As \(V_j\) is independent and \(m_j\in V_j\), every vertex of \(V_j\) is contained in \(m_j\). Hence \(S\) may be moved to \(V_j\) without destroying independence. Repeating this for every \(S\in V_k\), we obtain a \((k-1)\)-partite representation of \(\mathcal{G}(G)\), contradicting the minimality of \(k\). Therefore, $|\mathcal{M}(G)|= k.$\smallskip

To prove the converse, suppose that  $\mathcal{M}(G)=\{m_1,\ldots,m_k\}$, where $2\leq k<\infty$. Then $\mathcal{G}(G)$ is a connected graph. Define 
 \begin{align*}
     \begin{split}
         V_1&:=\{H\in V(\mathcal{M}(G)): H\subseteq m_1\}\\
         V_2&:=\{H\in V(\mathcal{M}(G)): H\subseteq m_2~\text{and}~H\nsubseteq m_1\}\\
         \vdots
         \\
         V_t&:=\{H\in V(\mathcal{M}(G): H\subseteq m_t~\text{and}~v\nsubseteq m_i,\forall i\in [t-1] \},
         \\
         \vdots\\
         V_k&:=\{H\in V(\mathcal{M}(G)): H\subseteq m_k~\text{and}~H\nsubseteq m_i, \forall i\in [k-1]\}.
     \end{split}
     \end{align*}
     Clearly, $V_i\not=\emptyset$ for each \(i\in[k]\) and $\{V_i:i\in [k]\}$ form a partition of $V(\mathcal{G}(G))$. Since every \(V_i\) is an independent set, $\mathcal{G}(G)$ is a $k$-partite graph.
\end{proof}

We next provide a necessary and sufficient condition for $\mathcal{G}(G)$ to be a complete multipartite graph.
\begin{corollary}\label{corui}
    Let $G$ be a finitely generated group with $|\mathcal{M}(G)|=k\geq 2$. Then $\mathcal{G}(G)$ is a complete $k$-partite graph if and only if $m_1\cap m_2=\Phi(G)$ for all distinct $m_1$, $m_2\in \mathcal{M}(G)$.
\end{corollary}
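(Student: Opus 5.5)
The plan is to rely on Lemma \ref{lmiff}: two vertices $S_1,S_2$ of $\mathcal{G}(G)$ are adjacent exactly when $M(S_1)\cap M(S_2)=\emptyset$. We also use the $k$-partition $V_1,\dots,V_k$ from the proof of Proposition \ref{propmul}, where $V_t$ consists of the vertices contained in $m_t$ but in no $m_i$ with $i<t$. Every vertex $H$ satisfies $M(H)\neq\emptyset$, because $G$ is finitely generated, and $M(H)\neq\mathcal{M}(G)$, because $H\nsubseteq\Phi(G)$.

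For sufficiency, assume $m_i\cap m_j=\Phi(G)$ for all $i\neq j$, and let $H$ be any vertex. If $H$ lay in two distinct maximal subgroups $m_i,m_j$, then $H\subseteq m_i\cap m_j=\Phi(G)$, which is impossible. Hence $|M(H)|=1$ for every vertex, and $V_t=\{H: M(H)=\{m_t\}\}$.

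Two vertices in the same $V_t$ share $m_t$, so they are nonadjacent. Two vertices in different classes $V_s,V_t$ have disjoint singleton sets $M(\cdot)$, so they are adjacent by Lemma \ref{lmiff}. Each $V_t$ is nonempty since it contains $m_t$. Therefore $\mathcal{G}(G)$ is complete $k$-partite with parts $V_1,\dots,V_k$.

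For necessity, assume $\mathcal{G}(G)$ is complete $k$-partite with parts $W_1,\dots,W_k$. In a complete multipartite graph, nonadjacency is an equivalence relation whose classes are exactly the parts. The $k$ maximal subgroups are pairwise adjacent, so they lie in distinct parts. Since there are exactly $k$ parts, each part contains exactly one maximal subgroup; relabel so that $m_t\in W_t$.

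Suppose, for contradiction, that $m_1\cap m_2\neq\Phi(G)$ for some distinct $m_1,m_2$. Then $H=m_1\cap m_2$ is not contained in $\Phi(G)$. It is nontrivial, since the trivial subgroup lies in $\Phi(G)$, and it is proper, being contained in $m_1$. So $H$ is a vertex. Since $H\subseteq m_1$ and $H\subseteq m_2$, it is nonadjacent to both $m_1$ and $m_2$. This forces $H\in W_1$ and $H\in W_2$, contradicting that the parts are disjoint. Hence $m_1\cap m_2=\Phi(G)$ for all distinct maximal subgroups.

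The main point needing care is this necessity direction. One must use that nonadjacency is transitive in a complete multipartite graph, rather than only that some $k$-partition exists, and one must check that $m_1\cap m_2$ is a genuine vertex when it differs from $\Phi(G)$. The rest is routine bookkeeping with Lemma \ref{lmiff}.
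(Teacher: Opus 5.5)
Your proof is correct and follows essentially the same route as the paper. For necessity, you observe that $m_1\cap m_2$ would be a vertex nonadjacent to both of the adjacent vertices $m_1,m_2$; for sufficiency, you use the partition from Proposition \ref{propmul} together with the fact that no vertex can lie in two distinct maximal subgroups. Your sufficiency step, which first shows $|M(H)|=1$ for every vertex and then applies Lemma \ref{lmiff}, is stated more carefully than the paper's brief version.
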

\begin{proof}
We first assume that $\mathcal{G}(G)$ is a complete $k$-partite graph.  Let $m_1$, $m_2\in \mathcal{M}(G)$ such that $m_1\cap m_2\not=\Phi(G)$. Then $m_1\cap m_2\in V(\mathcal{G}(G))$ and $m_1\cap m_2$ must belong to some partite set. Since $m_1\sim m_2$, $m_1$ and $m_2$ belong to different partite sets. Moreover, $m_1\cap m_2$ is neither adjacent to $m_1$ nor $m_2$. This contradicts the fact that $\mathcal{G}(G)$ is a complete $k$-partite graph. Thus $m_1\cap m_2=\Phi(G)$ for all distinct $m_1$, $m_2\in \mathcal{M}(G)$. 

Conversely, let $\mathcal{M}(G)=\{m_1,\ldots,m_k\}$ and $m_i\cap m_j=\Phi(G)$ for all $i\neq j$. By Proposition \ref{propmul}, $\mathcal{G}(G)$ is a $k$-partite graph with partite sets $V_1$, $V_2$, $\dots$, $V_k$. Since any two maximal subgroups of $G$ are adjacent in $\mathcal{G}(G)$, without loss of generality assume that $m_i\in V_i$ for $i\in [k]$.  If a vertex $H\in V_i$ that is not adjacent to some vertex in $V_j$ for $i\neq j$, then $H\subseteq m_i\cap m_j=\Phi(G)$, which contradicts $H\in V(\mathcal{G}(G))$. Thus $\mathcal{G}(G)$ is a complete $k$-partite graph.
\end{proof}

We now prove Theorem \ref{thmcmg} -- characterize the finite groups whose subgroup-based generating graphs are complete multipartite. 

\begin{proof}[Proof of Theorem \ref{thmcmg}]
We show the equivalences (1)$\iff$(2) and (2)$\iff$(3), which together complete the proof. Note that (1)$\iff$(2) is immediate by Corollary \ref{corui}. To show (2)$\iff$(3), we spilt the proof into several propositions.
\end{proof}

\begin{lemma}\label{lmqmul}
Let $G$ be a finite group and $N$ be any proper normal subgroup of $G$. If $\mathcal{G}(G)$ is a complete multipartite graph, then $\mathcal{G}({G}/{N})$ is also a complete multipartite graph.
\end{lemma}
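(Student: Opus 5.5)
We reduce the statement to the criterion of Corollary \ref{corui}: for a finite group $H$ with $|\mathcal{M}(H)|\geq 2$, the graph $\mathcal{G}(H)$ is complete multipartite exactly when any two distinct maximal subgroups of $H$ intersect in $\Phi(H)$. So we assume that $m_1\cap m_2=\Phi(G)$ for all distinct $m_1,m_2\in\mathcal{M}(G)$, and aim to show the same property for $G/N$.

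We use the correspondence theorem. The maximal subgroups of $G/N$ are exactly the quotients $m/N$ with $m\in\mathcal{M}(G)$ and $N\leq m$. Since intersections commute with passage to the quotient over a common normal subgroup, we get
\[
\Phi(G/N)=\bigcap_{m\in M(N)} m/N=\Bigl(\bigcap_{m\in M(N)} m\Bigr)/N,
\]
where $M(N)$ is the set of maximal subgroups containing $N$. We then split according to $k:=|M(N)|$.

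If $k\geq 2$, take distinct maximal subgroups $m_1/N$ and $m_2/N$ of $G/N$. By hypothesis $(m_1/N)\cap(m_2/N)=(m_1\cap m_2)/N=\Phi(G)/N$. Applying the hypothesis to every pair in $M(N)$ shows that $\bigcap_{m\in M(N)}m=\Phi(G)$, since it is squeezed between $\Phi(G)$ and $m_1\cap m_2=\Phi(G)$. Hence $\Phi(G/N)=\Phi(G)/N$, which equals $(m_1/N)\cap(m_2/N)$, and Corollary \ref{corui} gives the conclusion for $G/N$. Note that this case forces $N\leq\Phi(G)$, which is consistent with Lemma \ref{lmfrat}, though we do not need that lemma.

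The main obstacle is the degenerate case $k\leq 1$, where $G/N$ has at most one maximal subgroup. Since $N$ is proper, $k\geq 1$, so $k=1$ and $G/N$ is cyclic of prime power order. In this case $\mathcal{G}(G/N)$ has no vertices (or is trivial), and the statement holds only vacuously, by convention that the empty (null) graph counts as complete multipartite. This is the same kind of convention used for Corollary \ref{cortreestar}, which requires nontrivial $\mathcal{G}$. We would state this convention explicitly, and record that whenever $\mathcal{G}(G/N)$ is nontrivial we are in the case $k\geq 2$ handled above. This case is also what is used later to reduce to quotients in proving (2)$\Rightarrow$(3) of Theorem \ref{thmcmg}.
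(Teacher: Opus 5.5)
Your proof is correct and matches the paper's: both reduce to Corollary \ref{corui}, treat the case where $G/N$ has at most one maximal subgroup as giving the null graph, and otherwise lift two distinct maximal subgroups of $G/N$ to maximal subgroups of $G$ whose intersection is $\Phi(G)$. The only difference is cosmetic. You obtain $\Phi(G/N)=\Phi(G)/N$ directly from the squeeze $\Phi(G)\le\bigcap_{m\in M(N)}m\le m_1\cap m_2=\Phi(G)$, whereas the paper first notes $N\le m_1\cap m_2=\Phi(G)$ and then cites Lemma \ref{lmfrat}.
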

\begin{proof}
Let $N$ be a proper normal subgroup of $G$, and suppose that $\mathcal{G}(G)$ is complete multipartite. If $|\mathcal{M}(G/N)|\leq 1$, then $G/N$ is a null graph. Hence, assume that $|\mathcal{M}(G/N)|\geq 2$ and let ${m_1}/{N}$, ${m_2}/{N}\in \mathcal{M}(G/N)$ be distinct maximal subgroups. Then $m_1, m_2\in \mathcal{M}(G)$. Since $\mathcal{G}(G)$ is a complete multipartite graph, by Corollary \ref{corui}, $m_1\cap m_2=\Phi(G)$. Moreover, \(N\subseteq m_1\cap m_2\), and hence $N\subseteq\Phi(G).$ Consequently, 
\begin{equation*}
    m_1/N \bigcap m_2/N={m_1\cap m_2}/N=\Phi(G)/N.
\end{equation*}
By Lemma \ref{lmfrat},  $\Phi({G}/{N})={\Phi(G)}/{N}$. This implies $m_1/N\bigcap m_2/N=\Phi\left(G/N\right)$. By Corollary \ref{corui}, $\mathcal{G}({G}/{N})$ is a complete multipartite graph.

\end{proof}
\begin{lemma}\label{lmzppp}
Let $G$ be a group isomorphic to $\underbrace{\mathbb{Z}_p\times \mathbb{Z}_p\times \dots \times \mathbb{Z}_p}_{r ~\text{times}}$, where $p$ is a prime and $r\geq 3$ be integer. Then $\mathcal{G}(G)$ is not a complete multipartite graph. 
\end{lemma}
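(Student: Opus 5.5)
By Corollary \ref{corui}, it suffices to exhibit two distinct maximal subgroups $m_1,m_2$ of $G=\mathbb{Z}_p^r$ with $m_1\cap m_2\neq\Phi(G)$. So the plan is to compute $\Phi(G)$ and then produce two hyperplanes whose intersection is still a nontrivial proper subgroup.

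View $G$ as an $r$-dimensional vector space over $\mathbb{F}_p$. Every subgroup of $G$ is then an $\mathbb{F}_p$-subspace, because scalar multiplication by an integer coincides with repeated addition. Hence the maximal subgroups of $G$ are exactly the hyperplanes, that is, the subspaces of dimension $r-1$. The intersection of all hyperplanes is $\{0\}$: for any nonzero $v$ one can extend $\{v\}$ to a basis and take the hyperplane spanned by the remaining basis vectors, which misses $v$. Therefore $\Phi(G)=\{0\}$. This agrees with Case 3 of the proof of Theorem \ref{finiteabedia}, where $\Phi(G)=\langle p\rangle\times\cdots\times\langle p\rangle$, which is trivial when every $t_{1i}=1$.

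Now let $e_1,\dots,e_r$ be the standard basis and set
\[
m_1=\langle e_2,e_3,\dots,e_r\rangle,\qquad m_2=\langle e_1,e_3,\dots,e_r\rangle .
\]
These are distinct maximal subgroups, and
\[
m_1\cap m_2=\langle e_3,\dots,e_r\rangle ,
\]
which has order $p^{r-2}\geq p$ because $r\geq 3$. Thus $m_1\cap m_2\neq\{0\}=\Phi(G)$, and Corollary \ref{corui} shows that $\mathcal{G}(G)$ is not complete multipartite.

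One can also argue directly, as in the proof of Corollary \ref{corui}. The vertex $H=m_1\cap m_2$ lies in $\mathcal{G}(G)$ and is nonadjacent to both $m_1$ and $m_2$, while $m_1\sim m_2$. In a complete multipartite graph non-adjacency is an equivalence relation, so this configuration is impossible. There is no real obstacle in this argument; the only point needing care is the justification that $\Phi(G)$ is trivial, which I would state explicitly via the hyperplane argument above.
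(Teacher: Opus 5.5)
Your proposal is correct and follows the paper's proof essentially verbatim: the paper also notes $\Phi(G)=\{0\}$, takes the two coordinate hyperplanes $0\times\mathbb{Z}_p\times\cdots\times\mathbb{Z}_p$ and $\mathbb{Z}_p\times 0\times\mathbb{Z}_p\times\cdots\times\mathbb{Z}_p$, observes that their intersection is not $\Phi(G)$, and invokes Corollary \ref{corui}. Your explicit justification that $\Phi(G)$ is trivial, and your direct non-adjacency argument (which does not depend on how many parts Corollary \ref{corui} refers to), fill in details the paper leaves implicit.
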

\begin{proof}
Suppose that $G\cong \underbrace{\mathbb{Z}_p\times \mathbb{Z}_p\times \dots \times \mathbb{Z}_p}_{r~\text{times}}$. Then $\Phi(G)=\{0\}$ and the intersection of the two maximal subgroups $m_1=0\times \mathbb{Z}_p\times \mathbb{Z}_p \dots\times \mathbb{Z}_p$ and $m_2=\mathbb{Z}_p\times 0\times\mathbb{Z}_p\times \dots \times \mathbb{Z}_p$ is not equal to $\Phi(G)$. By Corollary \ref{corui}, $\mathcal{G}(G)$ is not a complete multipartite graph.
\end{proof}

We now classify the $p$-group $G$ of order $p^n$ for which  $\mathcal{G}(G)$ is a complete multipartite.
\begin{proposition}\label{propcmppg}
Let $G$ be a $p$-group of order $p^n$, where $p$ is prime and $n\geq 2$ be a integer. Then $\mathcal{G}(G)$ is a complete multipartite graph if and only if $G\cong \mathbb{Z}_{p^s}\times \mathbb{Z}_{p^t}$, where $s+t=n$ or $G$ is a $2$-generated non abelian $p$-group.
\end{proposition}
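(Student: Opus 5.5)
Throughout, we use Corollary \ref{corui}: $\mathcal{G}(G)$ is complete multipartite if and only if $m_1\cap m_2=\Phi(G)$ for all distinct $m_1,m_2\in\mathcal{M}(G)$, provided $|\mathcal{M}(G)|\ge 2$. The plan is to transfer this condition to the Frattini quotient. For a $p$-group $G$ of order $p^n$, the Burnside basis theorem gives $G/\Phi(G)\cong(\mathbb{Z}_p)^d$, where $d=d(G)$ is the minimal number of generators. The maximal subgroups of $G$ correspond bijectively to the maximal subgroups (hyperplanes) of $G/\Phi(G)$, and intersections correspond as well, because every maximal subgroup contains $\Phi(G)$. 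So the condition ``$m_1\cap m_2=\Phi(G)$ for all distinct maximal $m_1,m_2$'' is equivalent to ``any two distinct hyperplanes of $(\mathbb{Z}_p)^d$ intersect trivially.''

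The first step is to show that this last condition holds exactly when $d=2$. If $d=1$, then $G$ is cyclic and has a unique maximal subgroup, so $\mathcal{G}(G)$ is null. One must decide whether this counts as complete multipartite; the statement's inclusion of $\mathbb{Z}_{p^s}\times\mathbb{Z}_{p^t}$ with $s+t=n$ allows $s=0$ or $t=0$, which suggests the cyclic case is treated as degenerate. I would state that the case $d=1$ is excluded by the standing hypothesis $|\mathcal{M}(G)|\ge2$, which is needed for Corollary \ref{corui}. If $d=2$, distinct hyperplanes are distinct lines in $\mathbb{Z}_p^2$, so they meet in $0$. If $d\ge3$, two distinct hyperplanes meet in a subspace of dimension $d-2\ge1$; this is exactly Lemma \ref{lmzppp} applied to $G/\Phi(G)$. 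Alternatively, one can invoke the quotient Lemma \ref{lmqmul} with $N=\Phi(G)$: if $\mathcal{G}(G)$ were complete multipartite, then so would be $\mathcal{G}((\mathbb{Z}_p)^d)$, contradicting Lemma \ref{lmzppp}. Hence $\mathcal{G}(G)$ is complete multipartite if and only if $d(G)=2$.

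The second step identifies the $2$-generated $p$-groups with the list in the statement. If $G$ is abelian, the fundamental theorem together with $d(G)=2$ gives exactly two cyclic factors, so $G\cong\mathbb{Z}_{p^s}\times\mathbb{Z}_{p^t}$ with $s,t\ge1$ and $s+t=n$. Conversely, $\Phi(\mathbb{Z}_{p^s}\times\mathbb{Z}_{p^t})=p\mathbb{Z}_{p^s}\times p\mathbb{Z}_{p^t}$ has index $p^2$, so $d=2$. If $G$ is nonabelian, the hypothesis is precisely $d(G)=2$. Both directions then follow from the first step.

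The main obstacle is the rigorous justification that intersections of maximal subgroups descend to $G/\Phi(G)$, namely that $(m_1\cap m_2)/\Phi(G)=(m_1/\Phi(G))\cap(m_2/\Phi(G))$ and $\Phi(G/\Phi(G))=1$ via Lemma \ref{lmfrat}. Beyond that, the only delicate point is the boundary case $s=0$ or $t=0$ noted above. Everything else reduces to linear algebra over $\mathbb{F}_p$ together with the Burnside basis theorem, which I would quote as standard.
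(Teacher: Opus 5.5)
Your proposal is correct and follows essentially the same route as the paper: pass to $G/\Phi(G)\cong\mathbb{Z}_p^{d}$, observe that for $d=2$ distinct maximal subgroups correspond to distinct lines of $\mathbb{Z}_p^2$, which meet trivially, and rule out $d\ge 3$ via Lemmas \ref{lmqmul} and \ref{lmzppp}. The differences are cosmetic: you treat the abelian and nonabelian cases uniformly through the Burnside basis theorem rather than computing the abelian case directly, and the ``obstacle'' you flag (intersections of maximal subgroups descending to the Frattini quotient) is just the correspondence theorem, so it needs no extra work.
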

\begin{proof}
Since $G$ be a $p$-group of order $p^n$, one of the following holds: $(i)$ $G\cong \mathbb{Z}_{p^n}$;	$(ii)$ $G\cong \mathbb{Z}_{p^s} \times \mathbb{Z}_{p^t}$ with $s+t=n$; $(iii)$ $G$ is an abelian $p$-group of rank $k\geq 3$; $(iv)$ $G$ is a nonabelian $p$-group. If $G\cong \mathbb{Z}_{p^n}$, then $G$ is a null graph.

Let $G\cong \mathbb{Z}_{p^s} \times \mathbb{Z}_{p^t}$, where $s+t=n$. Then $|\mathcal{M}(G)|=p+1$. Observe that $\Phi(G)\cong \mathbb{Z}_{p^{t-1}}\times\mathbb{Z}_{p^{s-1}}$.  Furthermore, for any two distinct maximal subgroups \(m_1,m_2\in\mathcal{M}(G)\), $m_1\cap m_2=\Phi(G)$. Consequently, $\mathcal{G}(G)$ is a complete $(p+1)$-partite graph.

Now consider the case where $G$ be an abelian $p$-group of rank $k\geq 3$. Then there exists a normal subgroup $H$ of $G$ such that $G/H$ is isomorphic to the elementary abelian group $\mathbb{Z}_p\times \mathbb{Z}_p\times \dots \times \mathbb{Z}_p$ ($k$ times). By Lemma \ref{lmqmul} and Lemma \ref{lmzppp}, $\mathcal{G}(G)$ is not a complete multipartite graph.\smallskip

Finally, suppose that $G$ be a non abelian $p$-group. Let $G$ be $2$-generated. Then $G/\Phi(G)\cong \mathbb{Z}_p\times \mathbb{Z}_p$, which is a $2$-dimensional vector space over $\mathbb{Z}_p$. Hence, there is a one to one correspondence between the maximal subgroups of $G$ and the one dimensional subspaces of $G/\Phi(G)$. Since any two distinct one-dimensional subspaces of \(G/\Phi(G)\) intersect trivially, it follows that $m_1\cap m_2=\Phi(G)$ for any two distinct maximal subgroups \(m_1,m_2\) of \(G\). By Corollary \ref{corui}, $\mathcal{G}(G)$ is complete multipartite.  On the other hand, if \(G\) requires \(k\geq3\) generator, then $G/\Phi(G)\cong \mathbb{Z}_{p}^k$. Therefore, by Lemma \ref{lmzppp}, $\mathcal{G}(G)$ is not a complete multipartite graph.
\end{proof}
	Since a complete characterization of \(2\)-generated non abelian \(p\)-groups is rather involved, we leave this problem for future investigation. We conclude with a few examples of \(2\)-generated non abelian \(p\)-groups.

\begin{example}
	\begin{enumerate}[(i)]
		\item Every non abelian group $G$ of order $p^3$ is $2$-generated.
		\item  Dihedral group of order $2^n$.
		\item Quasi-dihedral group of order $n\geq 4$.
		\item $G=\mathbb{Z}_{p^k}\rtimes \mathbb{Z}\cong\langle x,y: x^{p^k}=y^p=1, yxy^{-1}=x^{1+p^{k-1}}\rangle$ is a $2$-generated non abelian group of order $p^k$
	\end{enumerate}
\end{example}

We next classify  finite nilpotent group $G$ such that  $\mathcal{G}(G)$ is complete multipartite. The proof requires the following basic result.

		\begin{lemma}(\cite{robin}, 9.3.5)\label{lmpdf}
			Let $G$ be a finite group, and let $p$ be a prime divisor of $|G|$. Then $p$ divides $[G:\Phi(G)]$.
	\end{lemma}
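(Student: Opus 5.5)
We want to show that every prime $p$ dividing $|G|$ also divides $[G:\Phi(G)]$. The plan is to reduce to the statement that $\Phi(G)$ contains no full Sylow $p$-subgroup of $G$, and then derive a contradiction via a Frattini argument.

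First, $\Phi(G)$ is a characteristic, hence normal, subgroup of $G$. It is also nilpotent, a standard fact. Let $P$ be a Sylow $p$-subgroup of $\Phi(G)$ and suppose, for contradiction, that $p\nmid [G:\Phi(G)]$. Then $|\Phi(G)|_p=|G|_p$, so $P$ is in fact a Sylow $p$-subgroup of $G$. Since $p$ divides $|G|$, $P$ is nontrivial, so it suffices to rule this out.

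Next apply the Frattini argument. Because $\Phi(G)\trianglelefteq G$ and $P\in \mathrm{Syl}_p(\Phi(G))$, we get $G=\Phi(G)N_G(P)$. Now use the non-generator property of the Frattini subgroup: if $G=\langle \Phi(G),H\rangle$, then $G=H$. Otherwise $H$ would be proper, hence contained in some maximal subgroup $m$; but $\Phi(G)\le m$ as well, so $\langle\Phi(G),H\rangle\le m\ne G$. Taking $H=N_G(P)$ gives $G=N_G(P)$, so $P\trianglelefteq G$.

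It remains to see why $P\trianglelefteq G$ with $P\le\Phi(G)$ and $P$ Sylow in $G$ is contradictory. Pass to the quotient. By the Schur--Zassenhaus theorem, $P$ has a complement $K$ in $G$, since $\gcd(|P|,[G:P])=1$. If $K=G$ then $P=1$, contradicting $p\mid|G|$. So $K$ is proper and $G=PK=\langle \Phi(G),K\rangle$, and the non-generator property forces $G=K$, a contradiction. Hence $p\mid[G:\Phi(G)]$.

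The only delicate point is this last step. To avoid invoking Schur--Zassenhaus, one could instead argue that some maximal subgroup $m$ has index divisible by $p$, and so fails to contain $P$, contradicting $P\le\Phi(G)\le m$. I expect the complement argument to be the cleanest route and will cite it as standard (this is \cite[9.3.5]{robin}).
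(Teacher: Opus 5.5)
Your main argument is correct. If $p\nmid[G:\Phi(G)]$, a Sylow $p$-subgroup $P$ of $\Phi(G)$ is Sylow in $G$. The Frattini argument plus the non-generator property gives $P\trianglelefteq G$. A Schur--Zassenhaus complement $K$ then satisfies $G=\langle\Phi(G),K\rangle=K$, which forces $P=1$.

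The paper gives no proof of its own; it only cites Robinson. What you wrote is the standard textbook argument. Your remark that $\Phi(G)$ is nilpotent is never used and can be dropped.

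Do not rely on your suggested way of avoiding Schur--Zassenhaus.

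In general, a prime divisor of $|G|$ need not divide the index of any maximal subgroup. Every maximal subgroup of $\mathrm{PSL}(2,7)$, which has order $168$, is isomorphic to $S_4$ or to the Frobenius group of order $21$, so has index $7$ or $8$. Yet $3\mid 168$.

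In your specific situation, $P$ is a normal Sylow $p$-subgroup. There a maximal subgroup has index divisible by $p$ exactly when it does not contain $P$. So the existence of such a maximal subgroup is just a restatement of $P\nleq\Phi(G)$, which is what you are trying to prove. Producing that subgroup needs a complement to $P$, or an equivalent tool, anyway. The Schur--Zassenhaus step is therefore essential to your proof, not optional.
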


\begin{proposition}\label{propnilcmg}
Let $G$ be a finite nilpotent group of order $n=\prod_{i=1}^kp_i^{n_i}$, where $k\geq 2$ and $p_1,\ldots,p_k$ are distinct primes. Then $\mathcal{G}(G)$ is complete multipartite if and only if $G\cong \mathbb{Z}_{p_1^{n_1}p_2^{n_2}}$.
\end{proposition}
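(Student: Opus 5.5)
We use the structure of finite nilpotent groups: $G\cong P_1\times\cdots\times P_k$, where $P_i$ is the Sylow $p_i$-subgroup. Every maximal subgroup of $G$ then has the form $P_1\times\cdots\times m\times\cdots\times P_k$ with $m\in\mathcal{M}(P_i)$ for a single $i$, and $\Phi(G)=\prod_i\Phi(P_i)$. By Corollary \ref{corui}, $\mathcal{G}(G)$ is complete multipartite if and only if any two distinct maximal subgroups of $G$ intersect in $\Phi(G)$. Both directions reduce to checking this intersection condition.

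\emph{Sufficiency.} Suppose $G\cong\mathbb{Z}_{p_1^{n_1}p_2^{n_2}}$. By Lemma \ref{lmm2}, $G$ has exactly two maximal subgroups, $\langle p_1\rangle$ and $\langle p_2\rangle$. Their intersection is $\langle p_1p_2\rangle=\Phi(G)$, so Corollary \ref{corui} gives the conclusion.

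\emph{Necessity, step 1: $k=2$.} Suppose $k\geq 3$. Take $m_1=M_1\times P_2\times\cdots\times P_k$ and $m_2=P_1\times M_2\times P_3\times\cdots\times P_k$, where $M_i\in\mathcal{M}(P_i)$. Then
\[
m_1\cap m_2=M_1\times M_2\times P_3\times\cdots\times P_k .
\]
This contains $P_3$. However, $\Phi(G)\cap P_3=\Phi(P_3)$, which is a proper subgroup of $P_3$ by Lemma \ref{lmpdf} (or simply because $P_3$ is a nontrivial $p$-group). Hence $m_1\cap m_2\neq\Phi(G)$, which contradicts Corollary \ref{corui}.

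\emph{Necessity, step 2: each Sylow subgroup is cyclic.} Now $k=2$. Suppose $P_1$ is not cyclic. Then $P_1$ has two distinct maximal subgroups $M,M'$. Consider $m_1=M\times P_2$ and $m_2=M'\times P_2$. Their intersection $(M\cap M')\times P_2$ contains $P_2\not\subseteq\Phi(G)$, again a contradiction. Hence each $P_i$ has a unique maximal subgroup and is therefore cyclic. So $G\cong\mathbb{Z}_{p_1^{n_1}}\times\mathbb{Z}_{p_2^{n_2}}\cong\mathbb{Z}_{p_1^{n_1}p_2^{n_2}}$.

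There is no real obstacle in this argument. The one point to state carefully is the description of the maximal subgroups of a direct product of groups of coprime orders, which follows from the Goursat-type decomposition quoted before Corollary \ref{corabliui}, extended to nilpotent groups via the Sylow decomposition.
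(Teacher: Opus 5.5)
Your proof is correct, but it is organized differently from the paper's. The paper first splits on whether $G$ itself is cyclic.
\begin{itemize}
\item For non-cyclic $G$ it passes to $G/\Phi(G)\cong\prod_i G_i$ with elementary abelian $G_i$. With at least three primes it invokes Lemmas~\ref{lmqmul} and~\ref{lmzppp}; with two primes it compares $m_1\times G_2$ with $m_2\times G_2$.
\item For cyclic $G$ it notes that $\langle p_1\rangle\cap\langle p_2\rangle\neq\Phi(G)$ when $k\geq 3$.
\end{itemize}

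You stay inside $G$, split first by the number of primes and then by cyclicity of the Sylow subgroups. Throughout you use a single observation: an intersection of two maximal subgroups that contains a whole Sylow subgroup $P_j$ cannot equal $\Phi(G)$, since $\Phi(G)\cap P_j=\Phi(P_j)\neq P_j$.

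This treats $k\geq 3$ uniformly, cyclic or not, without the quotient lemma. It is also the argument that actually makes the paper's non-cyclic case work. Lemma~\ref{lmzppp} concerns elementary abelian $p$-groups of rank at least three, not groups with at least three prime divisors. The paper also asserts that $G_1$ has maximal subgroups with $m_1\cap m_2\neq\Phi(G_1)$, which fails for $G_1\cong\mathbb{Z}_p\times\mathbb{Z}_p$. What really forces $(m_1\times G_2)\cap(m_2\times G_2)\neq\Phi(G)$ is the full factor $G_2$, exactly as in your step~2.

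The Frattini-quotient route only adds a reduction to elementary abelian factors, and your argument shows that reduction is unnecessary here.
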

\begin{proof}
First  suppose that $G\cong \mathbb{Z}_{p_1^{n_1}p_2^{n_2}}$. Then $\mathcal{M}(G)=\{\langle p_1\rangle, \langle p_2\rangle \}$. This implies $\mathcal{G}(G)$ is a complete bipartite graph with bipartition $V_1=\{H\in V(\mathcal{G}(G)):H\subseteq \langle p_1 \rangle \}$ and $V_2=\{H\in V(\mathcal{G}(G)): H\subseteq \langle p_2\rangle\}$.

Conversely, suppose that $\mathcal{G}(G)$ is complete multipartite. We distinguish two cases according as \(G\) is cyclic or noncyclic. Assume first that $G$ is a non-cyclic nilpotent group of order $n$ with $k\geq 2$. By Lemma \ref{lmpdf}, we have $|\pi({G}/{\phi(G)})|=|\pi(G)|\geq 2$, which implies ${G}/{\phi(G)}\cong \prod_{i=1}^kG_i$, where each $G_i$ is an elementary abelian group and $k\geq 2$. If $k\geq 3$, then by Lemmas \ref{lmqmul} and \ref{lmzppp}, $\mathcal{G}(G)$ is not a complete multipartite graph. Let $k=2$. Since $G$ is non cyclic, one of $G_i$ must be non cyclic. Without loss of generality suppose that $G_1$ is non cyclic. Then there exist maximal subgroups $m_1, m_2 \in \mathcal{M}(G_1)$ such that $m_1\cap m_2 \not=\Phi(G_1)$. Now consider the maximal subgroups $m_1\times G_2$ and $m_2\times G_2$ of $G_1\times G_2$. Since $\Phi (G_1\times G_2)=\Phi(G_1)\times \Phi(G_2)$,   $m_1\times G_2\cap m_2\times G_2\neq \Phi (G_1\times G_2)$. By Proposition \ref{propmul} and Lemma \ref{lmqmul}, $\mathcal{G}(G)$ is not a complete multipartite graph.\smallskip 

We now assume that \(G\) is cyclic nilpotent of order \(n\).
Then $G\cong \mathbb{Z}_n$ and $\mathcal{M}(G)=\{\langle p_i \rangle : i\in [k]\}$. If $k\geq 3$, then $\langle p_1 \rangle \cap \langle p_2 \rangle \not=\Phi(G)=\langle p_1 \rangle \cap \langle p_2 \rangle \cap \dots \cap \langle p_k \rangle =\langle p_1p_2\cdots p_k\rangle$,  contradicting Corollary \ref{corui}. Hence $k=2$ and $G\cong \mathbb{Z}_{p_1^{n_1}p_2^{n_2}}$.
\end{proof}

We finally classify finite non-nilpotent groups $G$ for which $\mathcal{G}(G)$ is complete multipartite. Together with the preceding three propositions, this completes the proof of $(2)\iff (3)$ in Theorem \ref{thmcmg}.
\begin{proposition}
Let $G$ be a non-nilpotent group. Then $\mathcal{G}(G)$ is a complete multipartite graph if and only if $G\cong P_1\rtimes P_2$, where $P_1$ is an elementary abelian sylow $p_1$-group of order $p_1$ and $P_2$ is a Sylow $p_2$-subgroup of $G$ of order $p_2$ with $p_2|(p_1-1)$.
\end{proposition}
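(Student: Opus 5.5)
By Corollary \ref{corui}, $\mathcal{G}(G)$ is complete multipartite exactly when $m_1\cap m_2=\Phi(G)$ for all distinct $m_1,m_2\in\mathcal{M}(G)$. So the plan is to classify the non-nilpotent finite groups with this pairwise-intersection property. By Lemma \ref{lmqmul} the property passes to quotients, and by Lemma \ref{lmfrat} we may pass to $\bar G=G/\Phi(G)$. Since $\Phi(\bar G)=1$, the condition becomes: any two distinct maximal subgroups of $\bar G$ intersect trivially. Moreover, $\bar G$ is still non-nilpotent, because $G$ is nilpotent if and only if $G/\Phi(G)$ is.

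\textbf{Sufficiency.} Let $G=P_1\rtimes P_2$ with $|P_1|=p_1$, $|P_2|=p_2$, $p_2\mid p_1-1$, and $G$ non-nilpotent (so the action is nontrivial). Then $G$ is a Frobenius group of order $p_1p_2$. Its maximal subgroups are $P_1$ and the $p_1$ conjugates of $P_2$. All of these have prime order, so any two distinct ones meet trivially. Also $\Phi(G)=1$, since that common intersection is trivial. Corollary \ref{corui} then gives the claim; alternatively, this group appears among the complete cases in Theorem \ref{lmcom}.

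\textbf{Necessity.} Suppose $\Phi(G)=1$, $G$ is non-nilpotent, and distinct maximal subgroups meet trivially. I proceed in three steps.
\begin{enumerate}[(a)]
\item Since $G$ is not nilpotent, some maximal subgroup $M$ is not normal. Then $M\cap M^g=1$ for every $g\notin M$, so $M$ is a Frobenius complement. By Frobenius' theorem $G=K\rtimes M$ with Frobenius kernel $K$. Because $M$ is maximal, $K$ is a minimal $M$-invariant normal subgroup. By Thompson, $K$ is nilpotent, and then $K$ is elementary abelian of order $p_1^a$.
\item Any maximal subgroup containing $K$ has the form $KM_0$ with $M_0$ maximal in $M$. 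If $M\neq 1$ had a proper nontrivial subgroup $M_0$, then $KM_0$ would lie in a maximal subgroup $N\neq M$ with $N\cap M\supseteq M_0\neq 1$, a contradiction. Hence $M$ has no proper nontrivial subgroups, so $|M|=p_2$ is prime.
\item $K$ itself must now be maximal (there is nothing strictly between $K$ and $G$ when $|G:K|=p_2$). A maximal subgroup of $K$ is normalised by neither all of $G$ nor by $M$ in general, but any proper nontrivial subgroup $L<K$ generates, together with $M$, a proper subgroup $LM$ whenever $L$ is $M$-invariant. Otherwise, consider $L\cdot M^{x}$. More cleanly: if $a\geq 2$, pick a nontrivial element $x\in K$. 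The subgroups $\langle x\rangle$ and $M$ then generate a proper subgroup, so some maximal subgroup $N\neq K$ contains $\langle x\rangle$ and meets $K$ nontrivially, a contradiction. So we need $\langle x,M\rangle\neq G$ for some $x$, which should follow since $M$ acts on $K\cong\mathbb{F}_{p_1}^a$ irreducibly with $|M|=p_2$.
\end{enumerate}
Granting step (c), $|K|=p_1$, and the Frobenius action embeds $M$ into $\operatorname{Aut}(K)\cong\mathbb{Z}_{p_1-1}$, which forces $p_2\mid p_1-1$.

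The main obstacle is step (c): ruling out $a\geq 2$. I expect the cleaner route there to be a counting argument rather than the generation argument above. The maximal subgroups are $K$ together with the conjugates of $M$, which are $p_1^a$ subgroups of order $p_2$. Any other maximal subgroup would meet $K$ or the complements nontrivially. One then checks that a subgroup $\langle x\rangle M_1$, with $M_1$ a complement normalising some line through $x$, is proper when $a\geq 2$. This requires choosing $x$ in an $M$-invariant proper subspace, which exists after extending scalars, or instead using the subgroup generated by $x$ and $x^m$. If this fails, I would fall back on showing that two distinct maximal subgroups containing $\Phi$-independent elements of $K$ must intersect nontrivially, via Lemma \ref{lmzppp} applied to a suitable elementary abelian quotient.
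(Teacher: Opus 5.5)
Your sufficiency argument and steps (a)--(b) are sound. Step (c), however, cannot be completed, because what it asserts is false. Maximality of $M$ means $K$ has no nontrivial proper $M$-invariant subgroup. So for every $1\neq x\in K$ the subgroup $\langle x^{m}: m\in M\rangle$ is all of $K$, and hence $\langle x,M\rangle=G$: irreducibility gives exactly the opposite of what you need. More globally, a maximal subgroup $N$ with $K\nleq N$ satisfies $NK=G$ and $N\cap K\trianglelefteq G$ (since $K$ is abelian), so $N\cap K=1$. Thus the maximal subgroups of $K\rtimes M$ are $K$ and complements of prime order $p_2$, and any two meet trivially, whatever $a$ is. Concretely, $A_4\cong(\mathbb{Z}_2\times\mathbb{Z}_2)\rtimes\mathbb{Z}_3$ has $\Phi(A_4)=1$ and maximal subgroups $V_4$ and four subgroups of order $3$, pairwise meeting trivially. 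So $\mathcal{G}(A_4)\cong K_{4,1,1,1,1}$ is complete multipartite although $|A_4|=12$. Moreover $3\nmid 2-1$, so reading ``order $p_1$'' as ``order $p_1^r$'' does not rescue the statement. Your fallbacks fail too: here $G'=K$, so every elementary abelian quotient is a quotient of $\mathbb{Z}_{p_2}$, and Lemma \ref{lmzppp} has nothing to act on. The paper's Case 1 breaks at the same spot. For $r\geq 2$ it posits maximal subgroups $k_1\rtimes\mathbb{Z}_{p_2}$ and $k_2\rtimes\mathbb{Z}_{p_2}$, which would require $M$-invariant proper subgroups of the kernel, exactly what maximality of $m$ rules out.

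Separately, your reduction to $\bar G=G/\Phi(G)$ is never undone: (a)--(b) describe only $G/\Phi(G)$, and nothing forces $\Phi(G)=1$. Nothing can. $\dic_3=\langle a,x\rangle\cong\mathbb{Z}_3\rtimes\mathbb{Z}_4$ is non-nilpotent, and its maximal subgroups are $\langle a\rangle\cong\mathbb{Z}_6$ and the three cyclic Sylow $2$-subgroups $\langle a^{i}x\rangle$. Any two of them meet in $\langle a^{3}\rangle=\Phi(\dic_3)$, as the paper's introduction itself observes and as Theorem \ref{thmuidic} with $k=1$ confirms; $\dih_{18}$ behaves the same way. The paper's Case 2 disposes of $\Phi(G)\neq 1$ with an unproved ``one can verify'', which these groups refute. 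What your steps (a)--(b) actually prove is a corrected statement. Combine them with the easy converse and the fact that maximal subgroups of $G$ and their intersections correspond to those of $G/\Phi(G)$. Then a non-nilpotent finite group $G$ has $\mathcal{G}(G)$ complete multipartite if and only if $G/\Phi(G)\cong\mathbb{Z}_{p_1}^{a}\rtimes\mathbb{Z}_{p_2}$ with $\mathbb{Z}_{p_2}$ acting faithfully and irreducibly. In that case $p_1\neq p_2$ and $a$ is the multiplicative order of $p_1$ modulo $p_2$.
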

\begin{proof}
Suppose that $G\cong P_1\rtimes P_2$, where $P_1$ is an elementary abelian sylow $p_1$-group of order $p_1$ and $P_2$ is a Sylow $p_2$-subgroup of $G$ of order $p_2$, and $p_2|(p_1-1)$. Then $P_1\cong \mathbb Z_{p_1}$ and $P_2\cong \mathbb Z_{p_2}$. Hence $G\cong \mathbb Z_{p_1}\rtimes \mathbb Z_{p_2},$ where \(p_1\neq p_2\). Since \(G\) is non-nilpotent, the action of \(\mathbb Z_{p_2}\) on \(\mathbb Z_{p_1}\) is nontrivial and  $\Phi(G)=\{1\}$. Thus $\mathcal{G}(G)$ is a complete multipartite graph.\smallskip
	
Conversely, suppose that $\mathcal{G}(G)$ be a complete multipartite graph. We divide the remainder of the proof into two cases.

\textbf{Case 1.} $\Phi(G)$ is trivial: Since \(G\) is non-nilpotent, there exists a non-normal maximal subgroup \(m\) of \(G\). As \(m\leq N_G(m)\leq G\) and \(m\) is maximal, the non-normality of \(m\) implies that  $m=N_G(m)$. Hence, for every \(x\in G\setminus m\), we have \(m^x\neq m\). Thus \(m\) and \(m^x\) are distinct maximal subgroups, and Corollary~\ref{corui} yields $m\cap m^x=\Phi(G)=\{1\}$. This implies $G$ is a Frobenius group with the Frobenius complement $m$. Hence $G\cong k_G\rtimes m$, where $k_G$ is the Frobenius Kernel of $G$ and $\gc(|m|,|k_G|)=1$.  We next show that the group $m$ contains only one maximal subgroup i.e., $|\mathcal{M}(m)|=1$. If not, then $|\mathcal{M}({G}/{k_G})|=|\mathcal{M}(m)|\geq 2$. By Corollary \ref{corui}, $k_G$ is a subgroup of $\Phi(G)$,  a contradiction. Therefore, $|\mathcal{M}(m)|=1$.
This implies $m\cong \mathbb{Z}_{p_2^t}$ for some prime $p_2$ and a natural number $t$. If $t\geq 2$, then $1\not=m_{p_2}=\{x\in m:x^{p_2}=1\}$ is a non trivial proper subgroup of $m$. Hence, $\langle k_G,m_{p_2}\rangle$ is a nontrivial proper subgroup of $G$ contained in a maximal subgroup $m_1\in \mathcal{M}(G)\setminus\{m\}$. Since $G$ is a complete multipartite, $m_{p_2}\subseteq m\cap m_1=\Phi(G)=\{1\}$, which is a contradiction. Therefore, $m\cong \mathbb{Z}_{p_2}$.

We now show that $k_G\cong \mathbb{Z}_{p_1}^r$ for some prime $p_1\not=p_2$. Since $k_G$ is nilpotent,it is isomorphic to the direct product product of its Sylow subgroups, i.e. $k_G\cong \prod_{i=1}^nS_i$. If possible, let $n\geq 2$. Then $m\subsetneq \langle S_1,m\rangle \not=G$, which contradicts the maximality of $m$. Therefore, $k_G$ is a Sylow $p_1$ subgroup of $G$ for some prime $p_1\not=p_2$ such that $p_2|p_1-1$. Also, since $G$ is solvable, $\Phi(k_G)=k_G\cap \Phi(G)=\{1\}$. Consequently, $k_G\cong \mathbb{Z}_{p_1}^r$, where $r$ is the rank of $r$.   If $r\geq 2$, then there exist maximal subgroups $k_1\rtimes \mathbb{Z}_{p_2}$ and $k_2\rtimes \mathbb{Z}_{p_2}$ such that $k_1\rtimes \mathbb{Z}_{p_2}\cap k_2\rtimes \mathbb{Z}_{p_2}\not=\{1\}$, which contradicts that is complete multipartite graph
Consequently, $G\cong \mathbb Z_{p_1}\rtimes \mathbb Z_{p_2}$.


\textbf{Case $2$.} $\Phi(G)$ is non trivial: Since $\mathcal{G}(G)$ be a complete multipartite graph, by Lemma \ref{lmqmul}, ${G}/{\Phi(G)}$ is a complete multipartite graph. If possible, let ${G}/{\Phi(G)}$ be a nilpotent group.  Since $G$ is non abelian, ${G}/{\Phi(G)}$ is non cyclic. By Lemma \ref{lmpdf}, every prime divisor of the order of a finite group also divides $[G:\Phi(G)]$. Thus, from Propositions \ref{propcmppg} and \ref{propnilcmg}), we must have $|\pi({G}/{\Phi(G)})|=|\pi(G)|=1$, which is a contradiction. Thus ${G}/{\Phi(G)}$ is not a nilpotent group. Similarly as above, we have ${G}/{\Phi(G)}\cong {P_1}/{\Phi(P_1)}\rtimes {P_2}/{\Phi(P_2)}\cong \overline{P_1}\times \overline{P_2}$, where $P_i$ is a Sylow $p_i$ subgroup of $G$ for $i\in \{1,2\}$. Thus $G=P_1P_2$. Since $P_1\Phi(G)$ is a normal subgroup of $G$, we have $G=N(P_1)$, the normalizer of $P_1$ in $G$ (\cite{robin}, 5.2.14). Hence $P_1$ is normal in $G$. Since $\overline{P_1}= \frac{P_1\Phi(G)}{\Phi(G)}\cong \frac{P_1}{P_1\cap \Phi(G)}=\frac{P_1}{\Phi(P_1)}\cong \mathbb{Z}_{p_1}$, we have $P_1\cong \mathbb{Z}_{p_1^s}$ for some $s\in \mathbb{N}$. Now, if possible, let $|\mathcal{M}(P_1)|\geq 2$. Since $\frac{G}{P_1}\cong P_2$, we have $|M(P_2)|\geq 2$. Also, since $\mathcal{G}(G)$ is a complete multipartite graph, $P_2\subseteq m_1\cap m_2=\Phi(G)$ ($cf.$ Corollary \ref{corui}), which is a contradiction. Therefore, $|\mathcal{M}(P_2)|=1$, i.e. $P_2\cong \mathbb{Z}_{p_2^t}$ for some $t\in \mathbb{N}$. 

If possible, let $s=t=1$. Then $\Phi(G)=\{1\}$, which contradicts our hypothesis. 
Let at least one of $s$ and $t$ be greater than one. Then $\Phi(G)\cong \mathbb{Z}_{{p_1}^{s-1}}\times Z_{p_2}^{t-1}$ and one can verify that there exist maximal subgroups whose intersection is not equal to the Frattini subgroup of $G$. This concludes the proof.
\end{proof}

A complete multipartite graph is said to be balanced if all its partite sets have the same cardinality. As an immediate consequence of Theorem~\ref{thmcmg}, we obtain the following result.
\begin{corollary}
Let $G$ be a finite group. Then $\mathcal{G}(G)$ is a  balanced complete multipartite graph if and only if $G$ is isomorphic to one of the following: 

\begin{enumerate}[(a)]
	\item $G\cong \mathbb{Z}_{{p}^t}\mathbb{Z}_{{q}^t}$ or $G\cong \mathbb{Z}_{p}\times \mathbb{Z}_p$, where $p$ and $q$ are distinct primes. 
	\item $G\cong \operatorname{Q}_8$ or $G\cong \langle a,b,c:a^p=b^p=c^p=1, a^c=a, b^c=b, a^b=ac\rangle$, where $p$ is an odd prime. 
	\item $G\cong P\rtimes Q$, where $P$ and $Q$ are groups of prime orders $p$ and $q$, respectively, such that $q|(p-1)$.
\end{enumerate}
\end{corollary}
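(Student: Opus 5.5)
Since $\mathcal{G}(G)$ is complete multipartite for every group listed in Theorem \ref{thmcmg}, the plan is to reduce to that theorem and then measure the sizes of the partite sets. By Corollary \ref{corui}, when $\mathcal{G}(G)$ is complete $k$-partite with $k=|\mathcal{M}(G)|$, the partite sets are forced. Each vertex $H$ lies in exactly one maximal subgroup, since $m_i\cap m_j=\Phi(G)$ and $H\nsubseteq\Phi(G)$. Hence the parts are
\[
V_m=\{H\le m:\ H\nsubseteq \Phi(G)\},\qquad m\in\mathcal{M}(G).
\]
Each $V_m$ contains $m$ together with the subgroups of $m$ not contained in $\Phi(G)$. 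Balancedness therefore means that the number $N(m)$ of subgroups of $m$ not contained in $\Phi(G)$ is independent of $m$. I will go through cases (a), (b), (c) of Theorem \ref{thmcmg}(3) and compute $N(m)$ in each.

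\textbf{Case (c): $\mathbb{Z}_{p}\rtimes\mathbb{Z}_q$.} The maximal subgroups are the normal $\mathbb{Z}_p$ and the $p$ conjugates of $\mathbb{Z}_q$. Since $\Phi(G)=1$, every part is a singleton, so the graph is complete and hence balanced.

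\textbf{Case (b): $\mathbb{Z}_{p^s q^t}$ with $s,t\ge 2$.} The two maximal subgroups are $\langle p\rangle\cong \mathbb{Z}_{p^{s-1}q^t}$ and $\langle q\rangle$. The subgroups of $\langle p\rangle$ not in $\Phi=\langle pq\rangle$ are exactly those of order $p^iq^t$, so $N(\langle p\rangle)=s$ and likewise $N(\langle q\rangle)=t$. Thus the graph is balanced iff $s=t$. The statement's $\mathbb{Z}_{p^t}\mathbb{Z}_{q^t}$ is presumably meant to be $\mathbb{Z}_{p^tq^t}$, and the complete cases $\mathbb{Z}_{pq}$ fit as well.

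\textbf{Case (a): $p$-groups.} Here all maximal subgroups have index $p$ and contain $\Phi(G)$, which has index $p^2$. For the abelian group $\mathbb{Z}_{p^s}\times\mathbb{Z}_{p^t}$ with $s\ne t$, the maximal subgroups are not all isomorphic: one is $\mathbb{Z}_{p^{s}}\times\mathbb{Z}_{p^{t-1}}$ (cyclic-like behavior), the others have a different type. I would count $N(m)$ by counting subgroups $H\le m$ with $H\Phi(G)=m$ versus those with $H\Phi(G)=\Phi(G)$. A simpler route is to compare the number of cyclic subgroups, or to use a lattice count showing the counts differ unless $s=t$.

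The expected answer, matching the statement, seems to be that only $\mathbb{Z}_p\times\mathbb{Z}_p$ (complete) survives among abelian groups, together with the nonabelian groups whose maximal subgroups are permuted transitively by automorphisms. These are $Q_8$ (three $\mathbb{Z}_4$'s) and the extraspecial group of order $p^3$ and exponent $p$, where all $p+1$ maximal subgroups are $\mathbb{Z}_p^2$ containing the centre $\Phi$. In both, an automorphism of $G$ acts transitively on $\mathcal{M}(G)$, which gives balancedness at once.

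\textbf{Main obstacle: the converse for 2-generated $p$-groups.} I must show every other 2-generated $p$-group has two maximal subgroups $m,m'$ with $N(m)\ne N(m')$. My first attempt is to use a nonisomorphism of maximal subgroups. For $\mathbb{Z}_{p^s}\times\mathbb{Z}_{p^t}$ with $(s,t)\ne(1,1)$, say with $s\ge t$ and $s\ge 2$, take $m$ cyclic of order $p^{s+t-1}$, which exists when $t=1$, and compare it with a noncyclic $m'$. A cyclic $m$ has few subgroups, essentially $N(m)=1$, while $m'$ contains more. When $s=t\ge2$ all maximal subgroups are isomorphic, so instead I compare how $\Phi$ sits inside them; I expect this subcase to need an explicit count.

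For nonabelian groups of order $>p^3$, such as $D_{2^n}$ and $QD_n$, the maximal subgroups are a cyclic one and noncyclic ones. The cyclic one gives $N=1$ while the others give $N>1$, so these are unbalanced. The hard part will be a general argument for an arbitrary 2-generated $p$-group. Failing that, I will restrict to the cases where the maximal subgroups are all isomorphic and argue from subgroup counts directly.
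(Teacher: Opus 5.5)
Your reduction is what the paper has in mind; it gives no proof beyond calling the corollary an immediate consequence of Theorem~\ref{thmcmg}. Three parts of your argument are fine:
\begin{itemize}
\item the cyclic case, where the parts have sizes $s$ and $t$. This count holds for all $s,t\ge 1$, so use it directly: Theorem~\ref{thmcmg}(b) as printed omits $n_i=1$, even though $\mathcal{G}(\mathbb{Z}_{pq})=K_2$;
\item the case $\mathbb{Z}_p\rtimes\mathbb{Z}_q$;
\item the ``if'' direction.
\end{itemize}
The gap is case (a), and it cannot be closed, because the ``only if'' direction is false there.

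Since $\Phi(G)\le m$ for every maximal $m$, your part sizes are $N(m)=\sigma(m)-\sigma(\Phi(G))$, where $\sigma$ counts all subgroups. So $\mathcal{G}(G)$ is balanced exactly when all maximal subgroups have the same number of subgroups. In particular it is balanced whenever all maximal subgroups are isomorphic. You observe this yourself for $\mathbb{Z}_{p^t}\times\mathbb{Z}_{p^t}$ with $t\ge 2$: every maximal subgroup is $\cong\mathbb{Z}_{p^t}\times\mathbb{Z}_{p^{t-1}}$. Equivalently, $\mathrm{GL}_2(\mathbb{Z}/p^t\mathbb{Z})$ maps onto $\mathrm{GL}_2(\mathbb{F}_p)$, which is transitive on $\mathcal{M}(G)$; this is exactly your argument for $\operatorname{Q}_8$.

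Concretely, take $G=\mathbb{Z}_4\times\mathbb{Z}_4$. Then $\Phi(G)=2G\cong\mathbb{Z}_2^2$, and each of the three maximal subgroups is $\cong\mathbb{Z}_4\times\mathbb{Z}_2$. Each contains exactly three subgroups outside $\Phi(G)$: itself and two cyclic subgroups of order $4$. So $\mathcal{G}(G)\cong K_{3,3,3}$, yet $G$ is not in the list.

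A nonabelian counterexample is $\langle x,y: x^4=y^4=1,\ yxy^{-1}=x^{-1}\rangle$. Here $(xy)^2=y^2$, and $\Phi=\langle x^2,y^2\rangle\cong\mathbb{Z}_2^2$ is central. The maximal subgroups $\langle x,y^2\rangle$, $\langle y,x^2\rangle$, $\langle xy,x^2\rangle$ are all $\cong\mathbb{Z}_4\times\mathbb{Z}_2$, giving $K_{3,3,3}$ again. This group has no cyclic maximal subgroup, so your heuristic that nonabelian $2$-generated $p$-groups of order $>p^3$ always have a cyclic maximal subgroup alongside noncyclic ones fails.

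Your method does settle the abelian case correctly. Write $\sigma(a,b)$ for $\sigma(\mathbb{Z}_{p^a}\times\mathbb{Z}_{p^b})$. For $s>t\ge 1$, $\mathbb{Z}_{p^s}\times\mathbb{Z}_{p^t}$ has one maximal subgroup of type $(s-1,t)$ and $p$ of type $(s,t-1)$. The formula $\sigma(a,b)=\sum_{i=0}^{b}(a-b+2i+1)p^{b-i}$ for $a\ge b$ gives $\sigma(s-1,t)=\sigma(s,t-1)+(s-t)p^{t}$. Hence the balanced abelian groups are exactly $\mathbb{Z}_{p^t}\times\mathbb{Z}_{p^t}$, $t\ge 1$, not only $\mathbb{Z}_p\times\mathbb{Z}_p$.

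For nonabelian $2$-generated $p$-groups the correct criterion is that all maximal subgroups have equally many subgroups. The stated list handles order $p^3$ correctly: $\dih_8$ and the exponent-$p^2$ group fail, since each has a cyclic maximal subgroup with $N=1$ beside a noncyclic one, while $\operatorname{Q}_8$ and the exponent-$p$ group pass. It does not handle larger orders. The statement must be amended before the ``only if'' direction can be proved; no subgroup count will rescue it as written.
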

\subsection{Perfectness and Forbidden subgraph}
In this subsection, we prove our final main result, Theorem \ref{perfectg}. We characterize finite groups whose subgroup-based generating graphs are perfect. To complete the proof, we use the Strong Perfect Graph Theorem of Chudnovsky--Robertson--Seymour \cite{chud}, together with the uniform intersection number of a finite group. We first recall the definition of Berge graph.
\begin{definition}
A graph \(G\) is a Berge if no induced subgraph of \(G\) is an odd cycle of length at least five or the complement of one.
\end{definition} 	
We now  recall the Strong Perfect Graph Theorem.
\begin{lemma}\cite[1.2]{chud}\label{lmspgt}
A graph \(G\) is perfect if and only if it is a Berge.
\end{lemma}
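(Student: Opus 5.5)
The lemma is the Strong Perfect Graph Theorem. We do not plan to reprove it in full here; we simply invoke \cite{chud}. Still, we indicate the shape of an argument, separating the elementary direction from the deep one.

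\emph{Perfect implies Berge.} Perfection is hereditary by definition, since it quantifies over all induced subgraphs. So it suffices to show that an odd hole and an odd antihole are not perfect.
\begin{enumerate}[(i)]
\item For an odd cycle $C_{2\ell+1}$ with $\ell\geq 2$, the clique number is $2$ while the chromatic number is $3$.
\item For its complement $\overline{C_{2\ell+1}}$, the clique number is $\ell$, because a clique in the complement is an independent set in $C_{2\ell+1}$. Each colour class is an independent set in the complement, hence a clique in $C_{2\ell+1}$, so it has at most $2$ vertices. Therefore at least $\ell+1$ colours are needed.
\end{enumerate}
So neither graph is perfect, and any graph containing one of them as an induced subgraph is not perfect.

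\emph{Berge implies perfect.} We would follow the strategy of Chudnovsky--Robertson--Seymour--Thomas and argue by a minimal counterexample. Let $G$ be a Berge graph that is not perfect, with $|V(G)|$ minimal; then every proper induced subgraph is perfect, so $G$ is minimally imperfect. The steps, in order, are:
\begin{enumerate}[(i)]
\item Establish the decomposition theorem: every Berge graph either belongs to one of five basic classes, or admits one of a fixed list of decompositions. The basic classes are bipartite graphs, complements of bipartite graphs, line graphs of bipartite graphs, their complements, and double split graphs. The decompositions are a $2$-join, a $2$-join in the complement, or a balanced skew partition.
\item Check that every basic class is perfect. For bipartite graphs and their line graphs this follows from K\"onig's theorems. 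Lov\'asz's Perfect Graph Theorem then handles the complements, and double split graphs are checked directly.
\item Show that a minimally imperfect graph admits none of the listed decompositions.
\end{enumerate}
Steps (i)--(iii) together contradict the existence of $G$.

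For step (iii), the $2$-join case is handled by Cornu\'ejols--Cunningham-type gluing arguments. The balanced skew partition case needs a strengthening of Chv\'atal's star cutset lemma.

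\emph{Main obstacle.} The main obstacle is step (i), the structural decomposition itself. This is the bulk of the roughly 150-page proof, and it proceeds by analysing which configurations a Berge graph contains: line graphs of bipartite subdivisions of $K_4$, prisms, wheels, and so on. We would not attempt to reproduce this. Instead we cite \cite[1.2]{chud} and use only the resulting equivalence, namely that to prove $\mathcal{G}(G)$ is perfect it suffices to exclude odd holes and odd antiholes of length at least five. The remainder of the proof of Theorem~\ref{perfectg} uses exactly this reduction.
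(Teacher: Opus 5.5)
Your proposal is correct and agrees with the paper, which gives no proof of this lemma and simply invokes the Strong Perfect Graph Theorem from \cite[1.2]{chud}, exactly as you do. Your check that odd holes and odd antiholes are imperfect is accurate. One small point in your sketch: the decomposition theorem as proved in \cite{chud} also allows proper homogeneous pairs, so your three-decomposition list is Chudnovsky's later refinement; this does not matter, since you rely only on the citation.
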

Before proving the main result, we shall prove the following lemmas which are required to complete the proof.
\begin{lemma}\label{propper}
   Let $G$ be a finitely generated group with at most four maximal subgroups. Then the graph $\mathcal{G}(G)$ is perfect.
\end{lemma}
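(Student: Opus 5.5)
The plan is to apply the Strong Perfect Graph Theorem (Lemma \ref{lmspgt}): it suffices to show that $\mathcal{G}(G)$ contains no induced odd cycle $C_n$ with $n\geq 5$ and no induced complement $\overline{C_n}$ with $n\geq 5$. The first step is a reduction to a finite combinatorial graph. Write $\mathcal{M}(G)=\{m_1,\dots,m_k\}$ with $k\leq 4$. For a vertex $S$ of $\mathcal{G}(G)$, the set $M(S)$ is nonempty, because $G$ is finitely generated and so every proper subgroup lies in a maximal one. It is also a proper subset of $\mathcal{M}(G)$, because $S\nsubseteq\Phi(G)$. By Lemma \ref{lmiff}, $S_1\sim S_2$ if and only if $M(S_1)\cap M(S_2)=\emptyset$. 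Hence the map $S\mapsto M(S)$ is a graph homomorphism onto an induced subgraph of the ``disjointness graph'' $D_k$. Here $D_k$ has as vertices the nonempty proper subsets of $[k]$, with two subsets adjacent when they are disjoint. Two vertices with the same image are nonadjacent (a nonempty set is not disjoint from itself) and have identical neighbourhoods. So $\mathcal{G}(G)$ is obtained from an induced subgraph of $D_k$ by replacing each vertex with an independent set of twins.

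The second step disposes of twins. In $C_n$ with $n\geq5$, no two distinct vertices have the same open neighbourhood. The same holds in $\overline{C_n}$, since neighbourhoods there are complements of closed neighbourhoods in $C_n$. Therefore an induced odd hole or odd antihole in $\mathcal{G}(G)$ uses at most one vertex from each twin class. It then maps isomorphically onto an induced odd hole or antihole of $D_k$. So it is enough to prove that $D_k$ is Berge for $k\leq4$. Since $D_k$ is an induced subgraph of $D_4$ (identify subsets of $[k]$ with subsets of $[4]$), it suffices to treat $D_4$.

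The third step is the analysis of $D_4$, which I expect to be the main obstacle, although it is a finite check.
\begin{enumerate}[(i)]
\item A $3$-subset of $[4]$ is disjoint from exactly one other vertex, namely the complementary singleton, so it has degree $1$ in $D_4$.
\item Every vertex of $C_n$ has degree $2$, and every vertex of $\overline{C_n}$ with $n\geq5$ has degree $n-3\geq2$. Hence no $3$-subset lies in a hole or antihole, and we may restrict to the ten singletons and $2$-subsets.
\item A $2$-subset $\{a,b\}$ has exactly three neighbours there: $\{c\}$, $\{d\}$ and $\{c,d\}$, where $\{c,d\}=[4]\setminus\{a,b\}$. This bounds antiholes: $\overline{C_n}$ is $(n-3)$-regular, so a $2$-subset can appear only if $n\leq6$, i.e. $n=5$ (which is $C_5$ again). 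An antihole $\overline{C_7}$ or larger would therefore consist only of singletons, but the four singletons form a clique $K_4$.
\item For holes, the clique number of $D_4$ is $4$ (pairwise disjoint nonempty subsets of $[4]$), and a hole uses at most ten vertices, so only $C_5,C_7,C_9$ need to be excluded. Here I would argue directly. Along an induced cycle, two consecutive singletons force the next vertex to avoid both, and singletons are pairwise adjacent, so at most two singletons can appear, and if two appear they are consecutive. The rest of the cycle is a path through $2$-subsets, each joined only to its complementary pair or to singletons. This forces the cycle to close up quickly: the pattern $\{a\},\{b\},\{c,d\}$ forces $\{a,b\}$ next, then a singleton in $\{c,d\}$, which is adjacent to $\{a\}$ or $\{b\}$ only through a chord. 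This yields either a chord or a cycle of length $4$. In particular, it should show there is no induced $C_5$ (and hence no $\overline{C_5}$), and no longer odd hole.
\end{enumerate}

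If this hand case analysis becomes messy, the fallback is to note that $D_4$ restricted to singletons and $2$-subsets is a fixed $10$-vertex graph. One can then simply enumerate its induced $5$-, $7$- and $9$-cycles; the only cycles that appear should have length at most $4$. This completes the Berge verification, and Lemma \ref{lmspgt} gives perfectness.
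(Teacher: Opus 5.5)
Your argument is correct, and it is organised differently from the paper's. The paper works directly on a putative odd hole or antihole in $\mathcal{G}(G)$. It first shows that every vertex $S$ on it has $|M(S)|\geq 2$; this is recorded as Corollary~\ref{corcmo} and reused in Lemma~\ref{corperboth}. It then fixes $M(S_1)=\{m_1,m_2\}$ and propagates complementary pairs around the cycle: $M(S_2)=\{m_3,m_4\}$, $M(S_3)=\{m_1,m_2\}$, $M(S_{2k+1})=\{m_3,m_4\}$. At that point two non-consecutive vertices have disjoint $M$-sets, which gives a chord. The antihole case is handled the same way.

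You instead collapse twin classes via $S\mapsto M(S)$ and reduce, once and for all, to Berge-ness of the fixed disjointness graph $D_4$. You then remove $3$-subsets and long antiholes by degree counting. Your route makes explicit that perfectness of $\mathcal{G}(G)$ depends only on the set system $\{M(S)\}$. Your degree argument also explicitly disposes of vertices with $|M(S)|=3$, which the paper's ``without loss of generality $M(S_1)=\{m_1,m_2\}$'' passes over. The paper's route is more local, and its key step (no vertex with $|M(S)|=1$ on an odd hole or antihole) is exactly what gets reused when $|\mathcal{M}(G)|\geq 5$.

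Step (iv) needs repair, because the details as written are wrong. After consecutive singletons $\{a\},\{b\}$, the next vertex must avoid $b$ but \emph{contain} $a$; otherwise it is adjacent to $\{a\}$. Also, $\{a\},\{b\},\{c,d\}$ is a triangle, so that pattern never occurs on an induced cycle.

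The clean argument uses only facts you already stated. Among the six $2$-subsets, each is adjacent only to its complement, so they induce a perfect matching $3K_2$. An induced cycle of length $n\geq 5$ contains at most two singletons, and if there are two they are consecutive. Deleting them leaves either the whole cycle or a path on at least $n-2\geq 3$ vertices inside $3K_2$, which is impossible. So $D_4$ has no hole of length at least $5$, hence no $\overline{C_5}\cong C_5$ either, and no enumeration is needed.
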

\begin{proof}
By Lemma \ref{lmspgt}, $\mathcal{G}(G)$, it suffices to show that neither $\mathcal{G}(G)$ nor its complement $\overline{\mathcal{G}(G)}$ contains an induced odd cycle of length at least five.

Suppose, for contradiction, that $S:S_1\sim S_2\sim \cdots \sim S_{2k+1}\sim S_1$ is an induced cycle in $\mathcal{G}(G)$, where $k\geq 2$. First, we show that $|M(S_t)|\geq 2$ for every vertex $S_t$ of $S$. Suppose $|M(S_t)|=1$ for some $S_t\in S$. Then $M(S_t)=\{m\}$ for some $m\in \mathcal{M}(G)$. Since $S_t\nsim S_{t+2}$ in $S$, $m\in M(S_{t+2})$. Similarly $S_t\nsim S_{t+3}$ implies $m\in M(S_{t+3})$. Hence $m\in M(S_{t+2})\cap M(S_{t+3})$ which implies $S_{t+2}\nsim S_{t+3}$, a contradiction. Hence $M(S_t)\geq 2$ for any $t\in [2k+1]$. By hypothesis, $G$ has at most four maximal subgroups. Let $\mathcal{M}(G)=\{m_1,m_2,m_3,m_4\}$. Without loss of generality, let $ M(S_1)=\{m_1, m_2\}$. Since $S_1\sim S_2$, we must have $m_3$, $m_4\in M(S_2)$. Again, as $S_2\sim S_3$ in $S$, we must have $m_1, m_2\in M(S_3)$. Also, $S_1\sim S_{2k+1}$ and therefore, $m_3$, $m_4\in M(S_{2k+1})$. Since $M(S_3)\cap M(S_{2k+1})=\emptyset$, we must have $S_3\sim S_{2k+1}$, which contradicts that $S$ is an induced cycle. Hence, $\mathcal{G}(G)$ contains no induced odd cycle of length at least $5$. By a similar argument, the same conclusion holds when $|\mathcal{M}(G)|=2$ or $3$.

Now, suppose for contradiction that $S':S_1'\sim S_2'\sim \cdots \sim S_{2k+1}'\sim S_1'$ be an induced cycle in $\overline{\mathcal{G}(G)}$, where $k\geq 2$. To prove that $|M(S'_t)|\geq 2$ for any $t\in[2k+1]$, let $M(S'_t)=\{m\}$ for some $t\in [2k+1]$ and $m\in \mathcal{M}(G)$. Since $S'_t\sim S_{t+1}$ in $\overline{\mathcal{G}(G)}$, we must have $m\in M(S'_{t+1})$. Again, as $S'_t\sim S'_{t-1}$, we must have $m\in M(S'_{t-1})$. Therefore, $m\in M(S'_{t+1})\cap M(S'_{t-1})$ and therefore $S'_{t-1}\sim S'_{t+1}$ in $S'$. This contradicts the notion that $S'$ is an induced cycle in $\overline{\mathcal{G}(G)}$. Hence $|M(S'_t)|\geq 2$ for any $t\in [2k+1]$.
Let $\mathcal{M}(G)=\{m_1,m_2,m_3,m_4\}$ and without loss of generality, let $M(S'_1)=\{m_1,m_2\}$. Since $S'_1\nsim S'_3$ in $S'$, we must have $m_3$, $m_4\in M(S'_3)$. Again, as $S'_3\nsim S'_{2k+1}$, we must have $m_1$, $m_2\in M(S'_{2k+1})$. In addition, $S'_{2k+1}\nsim S'_2$ implies $m_3$, $m_4\in M(S'_2)$. Then $M(S'_1)\cap M(S'_2)=\emptyset$ and hence, they are not adjacent in $\overline{\mathcal{G}(G)}$, a contradiction. Similarly, we can show that if $\mathcal{M}(G)\leq 3$, then there does not exist any induced cycle of odd length of at least five in $\overline{\mathcal{G}(G)}$. 
\end{proof}

\begin{corollary}\label{corcmo}
	Let $C$ be an induced odd cycle of length at least five either in $\mathcal{G}(G)$ or in $\overline{\mathcal{G}(G)}$. Then $|M(S)|\geq 2$ for any $S\in V(C)$.
	\end{corollary}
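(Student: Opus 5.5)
The claim is exactly the first step of each half of the proof of Lemma \ref{propper}; that step never used the hypothesis of at most four maximal subgroups. So I would simply extract those two arguments and run them for an arbitrary finitely generated $G$. Throughout I use Lemma \ref{lmiff}: two vertices are adjacent in $\mathcal{G}(G)$ exactly when $M(\cdot)\cap M(\cdot)=\emptyset$, and adjacent in $\overline{\mathcal{G}(G)}$ exactly when the intersection is nonempty. Every vertex $S$ is a proper subgroup of a finitely generated group, so $M(S)\neq\emptyset$. It therefore suffices to rule out $|M(S)|=1$ by contradiction, and I treat the two graphs separately.

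\emph{Cycle in $\mathcal{G}(G)$.} Let $C: S_1\sim S_2\sim\cdots\sim S_{2k+1}\sim S_1$ be induced with $k\geq 2$, and suppose $M(S_t)=\{m\}$. Indices are read modulo $2k+1$. Because the length is at least five, $S_{t+2}$ and $S_{t+3}$ are both distinct from $S_{t\pm1}$ and from $S_t$, so neither is adjacent to $S_t$. Non-adjacency gives $M(S_t)\cap M(S_{t+2})\neq\emptyset$ and $M(S_t)\cap M(S_{t+3})\neq\emptyset$, and since $M(S_t)=\{m\}$ this forces $m\in M(S_{t+2})\cap M(S_{t+3})$. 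But $S_{t+2}\sim S_{t+3}$ is an edge of $C$, so these two sets must be disjoint, which is a contradiction.

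\emph{Cycle in $\overline{\mathcal{G}(G)}$.} Let $C'$ be induced in the complement with length $2k+1\geq 5$, and suppose $M(S'_t)=\{m\}$. Adjacency of $S'_t$ to $S'_{t-1}$ and to $S'_{t+1}$ in $\overline{\mathcal{G}(G)}$ means each of $M(S'_{t\pm1})$ meets $\{m\}$, so $m\in M(S'_{t-1})\cap M(S'_{t+1})$. Hence $S'_{t-1}$ and $S'_{t+1}$ are adjacent in $\overline{\mathcal{G}(G)}$. Since the length is at least five, $S'_{t-1}$ and $S'_{t+1}$ are distinct and not consecutive on $C'$, so this is a chord, contradicting inducedness.

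The only point needing care is the index bookkeeping: length at least five is exactly what guarantees that $t+2$ and $t+3$ are non-neighbours of $t$ in the first case, and that $t-1$ and $t+1$ are distinct non-consecutive vertices in the second. I expect no real obstacle beyond this.
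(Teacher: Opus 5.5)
Your proposal is correct and is essentially the paper's own argument: the corollary is read off from the opening step of each half of the proof of Lemma~\ref{propper}. That step uses only Lemma~\ref{lmiff} (the $S_{t+2},S_{t+3}$ contradiction in $\mathcal{G}(G)$ and the $S_{t-1},S_{t+1}$ chord in $\overline{\mathcal{G}(G)}$), not the bound on $|\mathcal{M}(G)|$, and your explicit remarks that $M(S)\neq\emptyset$ for finitely generated $G$ and on the index bookkeeping fill in details the paper leaves implicit.
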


\begin{lemma}\label{corperboth}
    Let $G$ be a finitely generated group. Then the graph $\mathcal{G}(G)$ is perfect if $|\mathcal{M}(G)|\leq 4$ or $|\mathcal{M}(G)|\geq 5$ and $m_1\cap m_2=\Phi(G)$ for all distinct $m_1$, $m_2\in \mathcal{M}(G)$.
\end{lemma}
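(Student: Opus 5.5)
The first case, $|\mathcal{M}(G)|\leq 4$, is exactly Lemma \ref{propper}, so the plan concerns only the case $|\mathcal{M}(G)|\geq 5$ with $m_1\cap m_2=\Phi(G)$ for all distinct maximal subgroups. By Lemma \ref{lmspgt}, it suffices to show that $\mathcal{G}(G)$ is Berge. We first pin down the combinatorial type of each vertex. If $S\in V(\mathcal{G}(G))$ lay in two distinct maximal subgroups $m_1,m_2$, then $S\subseteq m_1\cap m_2=\Phi(G)$, which is excluded. Hence $|M(S)|\leq 1$. Since $G$ is finitely generated, $S$ lies in some maximal subgroup, so $|M(S)|=1$ for every vertex $S$.

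Next we combine this with Corollary \ref{corcmo}. That corollary says every vertex of an induced odd cycle of length at least five, in either $\mathcal{G}(G)$ or $\overline{\mathcal{G}(G)}$, satisfies $|M(S)|\geq 2$. Its argument, given in the proof of Lemma \ref{propper}, uses only Lemma \ref{lmiff} and never the bound of four maximal subgroups, so it applies here. This contradicts $|M(S)|=1$. Therefore neither $\mathcal{G}(G)$ nor its complement has such an induced cycle, so $\mathcal{G}(G)$ is Berge and hence perfect.

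As a cross-check that avoids the corollary, we can describe the graph directly. By Lemma \ref{lmiff}, $S_1\sim S_2$ exactly when their unique maximal overgroups differ. So $\mathcal{G}(G)$ is complete multipartite, with parts indexed by $\mathcal{M}(G)$; this agrees with Corollary \ref{corui}. A complete multipartite graph is perfect, because the complement of every induced subgraph is a disjoint union of cliques, and disjoint unions of cliques contain no induced cycle of length at least four. This argument also covers the case where $\mathcal{M}(G)$ is infinite, since Corollary \ref{corui} is only needed for the structural description.

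The one point needing care is checking that the proof of Corollary \ref{corcmo} really is independent of the hypothesis of at most four maximal subgroups. To avoid depending on it, we rely on the direct complete-multipartite argument, which needs no such check.
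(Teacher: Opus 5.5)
Your proposal is correct, and its first argument is essentially the paper's proof: the hypothesis on pairwise intersections gives $|M(S)|=1$ for every vertex. This contradicts $|M(S)|\geq 2$ on vertices of induced odd cycles of length at least five (Corollary \ref{corcmo}). You are right that this step never uses the bound of four maximal subgroups.

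Invoking that corollary for $\overline{\mathcal{G}(G)}$ as well is a small improvement. The paper's written proof only excludes odd holes in $\mathcal{G}(G)$ and leaves implicit the antiholes that the Berge condition also requires.

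The route you finally rely on is genuinely different. From $|M(S)|=1$ and Lemma \ref{lmiff} you read off that $\mathcal{G}(G)$ is complete multipartite with parts $\{S: M(S)=\{m\}\}$, $m\in\mathcal{M}(G)$. This is the content of Corollary \ref{corui}, obtained without its finiteness assumption. Perfection then becomes elementary, and the Strong Perfect Graph Theorem is not needed at all in this case. That is also what lets the argument cover an infinite $\mathcal{G}(G)$, where Lemma \ref{lmspgt} does not apply as stated.

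For that last point to hold, justify perfection of complete multipartite graphs directly rather than through the Berge criterion. Every induced subgraph $H$ is again complete multipartite, and colouring by parts is proper. One vertex from each part met by $H$ forms a clique of the same size, so $\chi(H)=\omega(H)$.
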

\begin{proof}
If $|\mathcal{M}(G)|\leq 4$, then by Proposition $\ref{propper}$, $\mathcal{G}(G)$ is a perfect graph. Now, assume that $|\mathcal{M}(G)|\geq 5$ and $m_1\cap m_2=\Phi(G)$ for all distinct $m_1$, $m_2\in \Phi(G)$. If possible, let $C:C_1\sim C_2\sim \dots\sim C_{2k+1}\sim C_1$ be an induced cycle in $\mathcal{G}(G)$, where $k\geq 2$. By an argument similar to that used in the proof of Proposition \ref{propper}, every vertex of $C$ is contained in at least two maximal subgroups of $G$. Let \(C_i\in C\) and write $M(C_i)=\{m_1,\ldots,m_t\}$,
$t\geq 2$. Then $C_i\subseteq \bigcap_{i=1}^tm_i=\Phi(G)$, contradicting \(C_i\in V(\mathcal{G}(G))\). Hence, the result follows.
\end{proof}
\begin{lemma}\label{lmnperf}
Let $G$ be a finitely generated group with $|\mathcal{M}(G)|\geq 5$, and suppose that $m_i\cap m_j\not=\Phi(G)$ for all $m_i, m_j\in \mathcal{M}(G)$. Then $\mathcal{G}(G)$ is not a perfect graph.
\end{lemma}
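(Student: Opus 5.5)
The plan is to exhibit an induced $5$-cycle in $\mathcal{G}(G)$ and then invoke the Strong Perfect Graph Theorem (Lemma~\ref{lmspgt}). Adjacency will be controlled entirely through Lemma~\ref{lmiff}: two vertices $S,T$ are adjacent if and only if $M(S)\cap M(T)=\emptyset$.

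\emph{Step 1: the candidate vertices.} Fix five distinct maximal subgroups $m_1,\dots,m_5\in\mathcal{M}(G)$. For $i\neq j$ set $S_{ij}=m_i\cap m_j$. We have $\Phi(G)\subseteq S_{ij}$, and by hypothesis $S_{ij}\neq\Phi(G)$. Hence $S_{ij}$ is nontrivial and not contained in $\Phi(G)$. It is also proper, since it lies in $m_i$. So each $S_{ij}$ is a vertex of $\mathcal{G}(G)$, and $\{m_i,m_j\}\subseteq M(S_{ij})$.

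\emph{Step 2: the pattern of index pairs.} Take the five vertices
\[
C_1=S_{12},\quad C_2=S_{34},\quad C_3=S_{51},\quad C_4=S_{23},\quad C_5=S_{45}.
\]
Cyclically consecutive $C_t$ carry disjoint index pairs: $\{1,2\},\{3,4\},\{5,1\},\{2,3\},\{4,5\}$, then back to $\{1,2\}$. Every non-consecutive pair shares an index: $C_1,C_3$ share $1$; $C_1,C_4$ share $2$; $C_2,C_4$ share $3$; $C_2,C_5$ share $4$; $C_3,C_5$ share $5$.
\begin{itemize}
\item The non-adjacencies are automatic. For example, $m_1\in M(C_1)\cap M(C_3)$, so $C_1\nsim C_3$ by Lemma~\ref{lmiff}.
\item Once the five vertices are pairwise distinct and the five consecutive pairs are adjacent, $C_1\sim C_2\sim C_3\sim C_4\sim C_5\sim C_1$ is an induced $C_5$. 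Then $\mathcal{G}(G)$ is not Berge, hence not perfect.
\end{itemize}
Since $C_5$ is self-complementary, this also gives an induced $C_5$ in $\overline{\mathcal{G}(G)}$. That is consistent with, but not needed for, the conclusion.

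\emph{Step 3: the real obstacle.} The consecutive adjacencies require
\[
M(S_{12})\cap M(S_{34})=\emptyset,
\]
and similarly for the other four consecutive pairs. However, $M(S_{ij})$ may contain maximal subgroups beyond $m_i,m_j$, and a common such $m$ would break the adjacency. Distinctness of the $C_t$ is a milder version of the same issue. I would handle this by choosing the five maximal subgroups carefully, in the following order.
\begin{itemize}
\item First, prefer pairs for which $M(m_i\cap m_j)=\{m_i,m_j\}$ exactly. If five maximal subgroups can be chosen so that all ten relevant pairs have this property, adjacency and distinctness follow at once.
\item Otherwise, pass to minimal intersections. Replace $C_t$ by a maximal intersection $\bigcap_{m\in A_t}m$ for suitable sets $A_t\subseteq\mathcal{M}(G)$ that are minimal subject to the intersection not lying in $\Phi(G)$. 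Then redo the index bookkeeping with the sets $A_t$ in place of the pairs: consecutive $A_t$ must have disjoint closures $M(\cdot)$, and non-consecutive ones must meet.
\item To control $M(\cdot)$, work in $G/\Phi(G)$, using Lemma~\ref{lmfrat}, where $M(\cdot)$ behaves more like a closure operator.
\end{itemize}
This selection step is where I expect the main difficulty. I am not certain it succeeds for every group satisfying the hypotheses. If a counterexample configuration appears, the statement would need the extra assumption that such five maximal subgroups exist, which matches the form of condition (f) in Theorem~\ref{perfectg}.
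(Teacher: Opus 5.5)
Your Steps~1 and~2 are exactly the paper's argument. Its entire proof is the assertion that $m_1\cap m_2\sim m_3\cap m_4\sim m_1\cap m_5\sim m_2\cap m_3\sim m_4\cap m_5\sim m_1\cap m_2$ is an induced $C_5$, and it never checks the five consecutive adjacencies. The point you isolate in Step~3 is therefore a genuine gap, and your proposal leaves it open. It cannot be closed, because the statement is false.

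\textbf{Counterexample.} Take $G=\mathbb{Z}_p^3$ for any prime $p$.
\begin{enumerate}[(i)]
\item The hypotheses hold. Here $\Phi(G)=1$ and the maximal subgroups are the $p^2+p+1\geq 7$ planes. Any two distinct planes meet in a line, so no two maximal subgroups intersect in $\Phi(G)$; indeed $\IN(G)=3$.
\item The graph is split. The vertices of $\mathcal{G}(G)$ are the lines and the planes. Two lines generate at most a plane, so the lines form an independent set. Two distinct planes generate $G$, so the planes form a clique.
\item The graph is perfect. An induced cycle of length $\ell\geq 4$ in a split graph contains at most two clique vertices, and if two, they are consecutive. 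Hence at least two consecutive vertices of the cycle lie in the independent set, which is impossible. The complement of a split graph is again split, so $\overline{\mathcal{G}(G)}$ has no such induced cycles either. By Lemma~\ref{lmspgt}, $\mathcal{G}(\mathbb{Z}_p^3)$ is perfect.
\end{enumerate}

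In your construction the failure is immediate. The vertices $m_1\cap m_2$ and $m_3\cap m_4$ are both lines, so $\langle m_1\cap m_2,\,m_3\cap m_4\rangle\neq G$. Each $M(m_i\cap m_j)$ consists of all $p+1$ planes through that line, so your first selection criterion is never met. No choice of sets $A_t$ can help either, since every vertex is a line or a plane.

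What your Step~2 does prove is a conditional statement. Suppose five maximal subgroups can be chosen so that each of the five cyclically consecutive pairs among $m_1\cap m_2$, $m_3\cap m_4$, $m_1\cap m_5$, $m_2\cap m_3$, $m_4\cap m_5$ generates $G$. Then these five vertices form an induced $C_5$; their distinctness follows from the adjacency and non-adjacency pattern. This condition holds, for example:
\begin{enumerate}[(i)]
\item when $M(m_i\cap m_j)=\{m_i,m_j\}$ for all ten pairs, as in $\mathbb{Z}_n$ with at least five prime divisors;
\item in $\mathbb{Z}_p^r$ with $r\geq 4$, taking the kernels of $x_1,x_2,x_3,x_4,x_1+x_2+x_3+x_4$.
\end{enumerate}
So the lemma needs an extra hypothesis of exactly the kind you anticipated. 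The step ``$\IN(G)\geq 3$ implies $\mathcal{G}(G)$ is not perfect'' in the proof of Theorem~\ref{perfectg} fails for the same reason. For instance, $\mathbb{Z}_3^3$ has a perfect $\mathcal{G}(G)$ but does not appear in that theorem's list.
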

\begin{proof}
Let $\mathcal{M}(G)=\{m_1,\ldots,m_t\}$, where $t\geq 5$. Then $m_1\cap m_2\sim m_3\cap m_4\sim m_1\cap m_5\sim m_2\cap m_3\sim m_4\cap m_5\cap m_1\cap m_2$ is an induced $C_5$ in $\mathcal{G}(G)$. Thus $\mathcal{G}(G)$ is not a perfect graph.
\end{proof}
Using the preceding results, we now prove Theorem \ref{perfectg}.
\begin{proof}[Proof of Theorem \ref{perfectg}]
  If $G$ is isomorphic to one of the groups in $(a)$, $(b)$ or $(c)$, then $G$ has at most four maximal subgroups. Hence by Corollary \ref{corperboth}, $\mathcal{G}(G)$ is a perfect graph. Now, suppose $G$ is isomorphic to one of the groups in $(d)$, $(e)$, then the uniform intersection number of $G$ is two. Therefore, Corollary~\ref{corperboth} again implies that \(\mathcal{G}(G)\) is perfect. Let $G$ be a group satisfying condition $(f)$. Suppose, to the contrary, that $C: S_1\sim S_2\sim S_3\sim \cdots \sim S_{2k+1}\sim S_1$ is an induced odd cycle in either $\mathcal{G}(G)$ or  $\overline{\mathcal{G}(G)}$, where $k\geq 2$. By Corollary \ref{corcmo}, we have $|M(S_i)|\geq 2$ for each $i$. Hence, for each $i$, there exists $p_i\in \mathcal{S}_{P}(G)$ such that $p_1\sim p_2\sim \cdots\sim p_{2k+1}\sim p_1$ form an induced odd cycle of length $2k+1$ in either
  $\mathcal{G}(G)$ or $\overline{\mathcal{G}(G)}$, a contradiction. Consequently $\mathcal{G}(G)$ is a perfect graph.

  Conversely, suppose that $G$ is a finite group such that $\mathcal{G}(G)$ is perfect. If $G$ has at most four maximal subgroups, then $G$ is isomorphic to one of groups listed in $(a)$, $(b)$ or $(c)$. Thus, assume that $|\mathcal{M}(G)|\geq 5$. If $\IN(G)\geq 3$, then by Lemma \ref{lmnperf}, $\mathcal{G}(G)$ is not perfect. Hence $\IN(G)=2$. We now consider three mutually exclusive cases according to the value of $\UIN(G)$. Suppose first that $\UIN(G)=2$. Then, by Theorem~\ref{thmcmg}, \(G\) is  isomorphic to one of the groups listed in \((d)\) or \((e)\). Next, suppose that $3\leq \UIN(G)\leq 5$. If there exist \(p_1,p_2,\ldots,p_{2k+1}\in\mathcal{S}_P(G)\), with \(k\geq2\), that form an induced odd cycle in either \(\mathcal{G}(G)\) or \(\overline{\mathcal{G}(G)}\), then, by Lemma \ref{lmspgt}, \(\mathcal{G}(G)\) is not perfect. Hence, no such induced odd cycle exists, and $G$ is one of the groups described in $(f)$. 
  
  Finally we show that  $\UIN(G)\geq 6$ is not possible. Indeed, if $\UIN(G)\geq 6$, then there exist five distinct maximal subgroups \(m_{i_1},m_{i_2},m_{i_3},m_{i_4},m_{i_5}\) of \(G\) such that $m_{i_1}\cap m_{i_2}\cap m_{i_3}\cap m_{i_4}\cap m_{i_5}
  \neq \Phi(G)$.  Consequently,  $m_{i_1}\cap m_{i_2}\sim m_{i_3}\cap m_{i_4}\sim m_{i_1}\cap m_{i_5}\sim m_{i_2}\cap m_{i_3}\sim m_{i_4}\cap m_{i_5}\sim m_{i_1}\cap m_{i_2}$ form an induced $C_5$ in $\mathcal{G}(G)$, a contradiction. This completes the proof.
\end{proof}
As an immediate consequence, we have the following.
\begin{corollary}
    The graph $\mathcal{G}(\mathbb{Z}_n)$ is perfect if and only if $n$ has at most four distinct prime divisors.
\end{corollary}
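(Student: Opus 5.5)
The plan is to reduce everything to the count of maximal subgroups, using the fact that for a cyclic group membership in maximal subgroups is governed by prime divisors. Write $n=\prod_{i=1}^k p_i^{t_i}$. Then $\mathcal{M}(\mathbb{Z}_n)=\{\langle p_i\rangle : i\in[k]\}$, so $|\mathcal{M}(\mathbb{Z}_n)|=k$, and $\Phi(\mathbb{Z}_n)=\langle p_1p_2\cdots p_k\rangle$. Every subgroup of $\mathbb{Z}_n$ has the form $\langle d\rangle$ with $d\mid n$. For such a subgroup,
$$M(\langle d\rangle)=\{\langle p_i\rangle : p_i\mid d\}.$$
By Lemma \ref{lmiff}, two vertices $\langle d\rangle,\langle d'\rangle$ are therefore adjacent in $\mathcal{G}(\mathbb{Z}_n)$ if and only if $d$ and $d'$ have no common prime divisor among $p_1,\dots,p_k$.

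\emph{Sufficiency.} Suppose $k\leq 4$. If $k\leq 1$, then $\mathcal{G}(\mathbb{Z}_n)$ has no vertices (every proper subgroup lies in $\Phi$), so it is trivially perfect. If $2\leq k\leq 4$, then $\mathbb{Z}_n$ has at most four maximal subgroups, and Lemma \ref{propper} (equivalently Lemma \ref{corperboth}) gives perfectness directly.

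\emph{Necessity.} Suppose $k\geq 5$. I will exhibit an induced $5$-cycle and then apply the Strong Perfect Graph Theorem, Lemma \ref{lmspgt}. This is the same idea as Lemma \ref{lmnperf}, but I will write the cycle out explicitly and check it. Take
$$S_1=\langle p_1p_2\rangle,\quad S_2=\langle p_3p_4\rangle,\quad S_3=\langle p_5p_1\rangle,\quad S_4=\langle p_2p_3\rangle,\quad S_5=\langle p_4p_5\rangle.$$

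First, each $S_i$ is a vertex. It is nontrivial because $n\nmid p_ap_b$, since $n$ has at least five prime divisors. It is proper because $p_ap_b$ is not a unit modulo $n$. It is not contained in $\Phi(\mathbb{Z}_n)=\langle p_1\cdots p_k\rangle$ because $p_1\cdots p_k\nmid p_ap_b$ when $k\geq 3$.

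Second, the adjacencies reduce to disjointness of the index pairs $\{1,2\},\{3,4\},\{5,1\},\{2,3\},\{4,5\}$. Cyclically consecutive pairs are disjoint, so consecutive $S_i$ are adjacent. Each non-consecutive pair shares an index ($1$, $2$, $3$, $4$, $5$ respectively), so those $S_i$ are non-adjacent. Hence the five subgroups form an induced $C_5$, the graph is not Berge, and by Lemma \ref{lmspgt} it is not perfect.

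The only step needing care is this induced-cycle verification. The paper's displayed cycle in Lemma \ref{lmnperf} is garbled, so I will rely on the explicit choice above, which checks out by the prime-divisor criterion.
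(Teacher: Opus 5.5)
Your proof is correct and follows the paper's route: the corollary is read off from Theorem~\ref{perfectg}, with perfectness for $k\le 4$ coming from Lemma~\ref{propper}, and non-perfectness for $k\ge 5$ coming from the induced $5$-cycle $m_1\cap m_2,\ m_3\cap m_4,\ m_1\cap m_5,\ m_2\cap m_3,\ m_4\cap m_5$ of pairwise intersections of maximal subgroups. That cycle is used in Lemma~\ref{lmnperf} and in the proof of Theorem~\ref{perfectg}, and your explicit check via $M(\langle d\rangle)=\{\langle p_i\rangle : p_i\mid d\}$ is what makes it rigorous here, since in $\mathbb{Z}_n$ the subgroup $\langle p_ap_b\rangle$ lies in no maximal subgroup other than $\langle p_a\rangle$ and $\langle p_b\rangle$.
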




\section{Generating graph of a finite cyclic group}\label{seccyclic}
In this section, we study the subgroup-based generating graph of a finite cyclic group. We determine its maximum and minimum degrees, the number of pendant vertices, and characterize when it is Hamiltonian.

Let $n=\prod_{i=1}^{k}p_i^{t_i},$ where \(p_1,\ldots,p_k\) are distinct primes and \(t_i\geq1\). Since a finite cyclic group of order $n$ is isomorphic to the cyclic group $(\mathbb{Z}_n,+)$, we consider the graph $\mathcal{G}(\mathbb{Z}_n)$. 
Every subgroup of $\mathbb{Z}_n$ is generated by a divisor $d$ of $n$. Thus $|V(\mathcal G(\mathbb Z_n))|
=\prod_{i=1}^{k}(t_i+1)-\prod_{i=1}^{k}t_i-1.$  Let $H=\left\langle\prod_{i=1}^{t}p_i^{s_i}\right\rangle$, where $t\in[k-1]$ and $ 0\leq s_i\leq t$. Then $\deg(H)=\prod_{j:s_j=0}(s_j+1)-1$.
\begin{corollary}
    Let $x$, $y\in V(\mathcal{G}(\mathbb{Z}_n))$. Then $\deg(x)=\deg(y)$ if and only if
    \begin{equation*}
        \sum_{i\in [k]-\operatorname{in}(\pi(x))}(t_i+1)=\sum_{j\in [k]-\operatorname{in}(\pi(y))}(t_j+1),\end{equation*}
    where $\pi(x)$ is the set of all prime divisors of $x$ and $\operatorname{in}(\pi(x))$ denotes the set of indices corresponding to the prime divisors of $x$.
\end{corollary}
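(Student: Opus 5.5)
We compute $\deg(\langle x\rangle)$ explicitly from Lemma~\ref{lmiff} and then compare the formulas for $x$ and $y$. Identify each vertex of $\mathcal{G}(\mathbb{Z}_n)$ with the divisor $d$ of $n$ generating it. A divisor $d$ gives a vertex exactly when $d\neq 1$, $d\neq n$ and $\langle d\rangle\nsubseteq \Phi(\mathbb{Z}_n)=\langle p_1\cdots p_k\rangle$. The last condition says that some $p_i$ does not divide $d$, i.e. $\operatorname{in}(\pi(d))\neq[k]$. Since $\mathcal{M}(\mathbb{Z}_n)=\{\langle p_i\rangle: i\in[k]\}$ and $M(\langle d\rangle)=\{\langle p_i\rangle : i\in \operatorname{in}(\pi(d))\}$, Lemma~\ref{lmiff} gives
\[
\langle x\rangle\sim\langle y\rangle
\iff \operatorname{in}(\pi(x))\cap\operatorname{in}(\pi(y))=\emptyset
\iff \gcd(x,y)=1 .
\]

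Next I count the neighbours of a fixed vertex $\langle x\rangle$. Put $I_x=\operatorname{in}(\pi(x))$; it is nonempty because $x\neq1$. The neighbours are the divisors $y\neq 1$ of $n$ whose prime support lies in $[k]\setminus I_x$. Every such $y$ is automatically a vertex: it is proper and not in $\Phi$ because it misses the primes indexed by $I_x$. Also $y\neq x$, since $x\neq 1$ and the supports are disjoint. The admissible exponents are independent choices $0\le e_j\le t_j$ for $j\notin I_x$, with the all-zero choice removed. Hence
\[
\deg(\langle x\rangle)=\prod_{j\in[k]\setminus I_x}(t_j+1)\;-\;1 .
\]
Consequently $\deg(x)=\deg(y)$ holds if and only if the two products over $[k]\setminus I_x$ and $[k]\setminus I_y$ agree.

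The final step, and the one I expect to be the real obstacle, is matching this product criterion to the additive condition as stated. My first attempt would be to pass to logarithms, where the product condition becomes $\sum_{i\in[k]\setminus I_x}\log(t_i+1)=\sum_{j\in[k]\setminus I_y}\log(t_j+1)$. This is exactly the displayed equality once each summand $t_i+1$ is read as the logarithmic weight of the $i$-th factor. With that reading both directions follow immediately from the degree formula.

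I would then check whether the literal sums with weights $t_i+1$ can be used directly. They can when the two index sets coincide, or when each complement has at most one element: then the sum and the product of the $t_i+1$ coincide, since both reduce to a single term, or to $0$ versus $1$ for the empty set. In that range the stated equivalence holds verbatim. Outside this range the sum and product criteria genuinely differ. For example, with complements having exponent data $\{1,5\}$ and $\{2,3\}$, the products are $2\cdot6=12=3\cdot4$, but the sums are $8\neq7$. So before writing the final proof I would restrict to the range where the two agree or adopt the logarithmic reading, and then record the equivalence.
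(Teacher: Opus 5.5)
Your degree computation is correct, and it is what the paper intends. The formula printed just before the corollary, $\deg(H)=\prod_{j:s_j=0}(s_j+1)-1$, is evidently a typo for $\prod_{j\in[k]\setminus \operatorname{in}(\pi(H))}(t_j+1)-1$. The paper gives no further argument for the corollary. So the only valid conclusion is the one you reach in the middle of your proposal:
\[
\deg(x)=\deg(y)\iff \prod_{i\in[k]\setminus I_x}(t_i+1)=\prod_{j\in[k]\setminus I_y}(t_j+1).
\]

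The displayed criterion with sums is false in both directions, and you should say so outright. Rescuing it by a ``logarithmic reading'' does not work: replacing $t_i+1$ by $\log(t_i+1)$ gives a different statement, not a proof of this one.

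Your counterexample is realizable. Take $n=p_1p_2^5p_3^2p_4^3$. The vertices $\langle p_3p_4\rangle$ and $\langle p_1p_2\rangle$ both have degree $2\cdot 6-1=3\cdot 4-1=11$, yet the sums are $8\neq 7$.

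The other direction also fails. Take $n=p_1^4p_2p_3^2$. The vertices $\langle p_2p_3\rangle$ and $\langle p_1\rangle$ give equal sums, $5=2+3$, but their degrees are $4\neq 5$.

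One small remark: the empty-complement case you mention never occurs. A vertex is not contained in $\Phi(\mathbb{Z}_n)=\langle p_1\cdots p_k\rangle$, so $[k]\setminus I_x\neq\emptyset$. The sum and product criteria therefore coincide only when the index sets are equal, or when both complements are singletons.
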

\begin{corollary}\label{cormm}
    Let $n=\prod_{i=1}^kp_i^{t_i}$, where $t_1\geq t_2\geq \dots\geq t_k$. Then the maximum and minimum degrees of $\mathcal{G}(\mathbb{Z}_n)$ are $\prod_{i=1}^{k-1}(t_i+1)-1$ and $t_k$, respectively.
\end{corollary}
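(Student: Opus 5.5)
The plan is to compute the degree of an arbitrary vertex of $\mathcal{G}(\mathbb{Z}_n)$ in closed form and then optimize over vertices. Throughout I assume $k\geq 2$; otherwise $\mathbb{Z}_n$ has a unique maximal subgroup and the graph is null by Proposition~\ref{propdiam}.

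\textbf{Step 1: vertices and adjacency.} Every subgroup of $\mathbb{Z}_n$ is $\langle d\rangle$ for a unique divisor $d$ of $n$. The subgroup $\langle d\rangle$ is nontrivial and proper exactly when $1<d<n$, and $\Phi(\mathbb{Z}_n)=\langle p_1p_2\cdots p_k\rangle$. Hence $\langle d\rangle\nsubseteq\Phi(\mathbb{Z}_n)$ exactly when some $p_i$ does not divide $d$. So the vertices are the divisors $d>1$ of $n$ whose prime support $\pi(d)$ is a nonempty proper subset of $\{p_1,\dots,p_k\}$. For two divisors, $\langle d,e\rangle=\langle\gcd(d,e)\rangle$, so $\langle d\rangle\sim\langle e\rangle$ if and only if $\gcd(d,e)=1$, i.e.\ $\pi(d)\cap\pi(e)=\emptyset$. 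This agrees with Lemma~\ref{lmiff}, because $M(\langle d\rangle)=\{\langle p_i\rangle: p_i\in\pi(d)\}$.

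\textbf{Step 2: degree formula.} Fix a vertex $\langle d\rangle$ and put $I=[k]\setminus\operatorname{in}(\pi(d))$, which is nonempty. Its neighbours are the $\langle e\rangle$ with $e>1$, $e\mid n$, and $\pi(e)\subseteq\{p_i:i\in I\}$. Every such $e$ is automatically a vertex: $e\neq n$ because it misses the primes of $d$, and $\pi(e)$ is proper. Also $e\neq d$, since $\gcd(d,e)=1$ and $d>1$. Counting the choices of exponents $0\le s_i\le t_i$ for $i\in I$ and discarding $e=1$ gives
\[
\deg\langle d\rangle=\prod_{i\in I}(t_i+1)-1 .
\]
This is consistent with the degree formula stated just before the preceding corollary.

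\textbf{Step 3: optimization.} The quantity $\prod_{i\in I}(t_i+1)-1$ is strictly increasing under enlarging $I$, since each factor is at least $2$. The index set $I$ can be any nonempty proper subset of $[k]$: take $d=\prod_{i\notin I}p_i$.
\begin{enumerate}[(i)]
\item The maximum is therefore attained when $|I|=k-1$, with the omitted index chosen to have the smallest factor $t_j+1$. Since $t_k$ is minimal, omitting $k$ (that is, $d=p_k$) gives $\prod_{i=1}^{k-1}(t_i+1)-1$. Any other choice of $I$ is either a subset of some $(k-1)$-set or omits a larger factor, so it gives no more.
\item The minimum is attained when $|I|=1$, say $I=\{j\}$, giving degree $t_j$. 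This is minimized at $j=k$, realized by $d=p_1p_2\cdots p_{k-1}$, with value $t_k$.
\end{enumerate}

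There is no real obstacle here. The only point needing care is Step 2: checking that every coprime divisor $e>1$ is a genuine vertex (proper and not inside $\Phi$) and is distinct from $d$, so that subtracting $1$ (for $e=1$) is the only correction needed.
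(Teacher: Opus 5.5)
Your proof is correct and follows the paper's route: the paper states the degree formula $\deg\langle d\rangle=\prod_{j:\,p_j\nmid d}(t_j+1)-1$ (printed with $s_j$ where $t_j$ is meant) and treats the corollary as immediate from it, with the extremes at $d=p_k$ and $d=p_1\cdots p_{k-1}$, which is exactly your Steps 2--3. Your derivation of that formula adds what the paper leaves implicit: the check that every coprime divisor $e>1$ is a genuine vertex, and the standing assumption $k\ge 2$.
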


\begin{proposition}\label{proppen}
    Let $n=\prod_{i=1}^kp_i^{t_i}$, where $t_1\geq t_2\geq \dots \geq t_k$ and $k\geq 2$. Then the following holds:
    \begin{enumerate}[(i)]
    	\item  $\mathcal{G}(G)$ has a pendant vertex if and only if  \(t_i=1\) for some \(i\in[k]\).
    	\item  Suppose that $t_i\geq 2 \quad \text{for } i\in[l],$ and 
    	$ t_i=1 \quad \text{for } i\in\{l+1,\ldots,k\}$.The number of pendant vertex is $(k-l)\prod_{i=1}^lt_i$.
    \end{enumerate} 
    Moreover, if \(t_k=1\), any vertex of the form $(\prod_{i=1}^{k-1}p_i^{s_i})$ with $s_i\in[t_i]$, is a pendant vertex of $\mathcal{G}(G)$.
\end{proposition}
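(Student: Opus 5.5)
The plan is to translate everything into divisor arithmetic in $\mathbb{Z}_n$ and then read off the degree of each vertex. Every subgroup of $\mathbb{Z}_n$ is $\langle d\rangle$ for a unique divisor $d$ of $n$, and $\mathcal{M}(\mathbb{Z}_n)=\{\langle p_i\rangle : i\in[k]\}$ with $\Phi(\mathbb{Z}_n)=\langle p_1\cdots p_k\rangle$.

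First I will record two facts. Write $\operatorname{supp}(d)=\{i\in[k]: p_i\mid d\}$. Then $M(\langle d\rangle)=\{\langle p_i\rangle : i\in\operatorname{supp}(d)\}$. Moreover, $\langle d\rangle$ is a vertex of $\mathcal{G}(\mathbb{Z}_n)$ exactly when $d\neq 1$ (so the subgroup is proper) and $\operatorname{supp}(d)\neq[k]$ (so $\langle d\rangle\nsubseteq\Phi$). These two conditions together say $\emptyset\neq\operatorname{supp}(d)\subsetneq[k]$; nontriviality of $\langle d\rangle$ is then automatic, since $p_i\nmid d$ for some $i$ forces $d\neq n$.

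By Lemma~\ref{lmiff}, two vertices $\langle d\rangle$ and $\langle e\rangle$ are adjacent if and only if $\operatorname{supp}(d)\cap\operatorname{supp}(e)=\emptyset$.

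Next I will compute the degree. Fix a vertex $\langle d\rangle$ and put $P=\operatorname{supp}(d)$. Its neighbours are the divisors $e\neq 1$ of $n$ whose support lies in $[k]\setminus P$. Each such $e$ is automatically a vertex, because $\operatorname{supp}(e)$ is nonempty and misses $P\neq\emptyset$. It is also automatically distinct from $d$, since the supports differ. Counting exponent choices gives
\[
\deg(\langle d\rangle)=\prod_{i\notin P}(t_i+1)-1 .
\]

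Part (i) then follows directly. A vertex is pendant iff $\prod_{i\notin P}(t_i+1)=2$. Since $P$ is a proper subset of $[k]$, the complement $[k]\setminus P$ is nonempty and every factor is at least $2$. Hence the product equals $2$ iff $[k]\setminus P=\{i\}$ for a single index $i$ with $t_i=1$. Conversely, if $t_i=1$, the vertex $\langle\prod_{j\neq i}p_j\rangle$ has this form and is pendant; here $k\geq 2$ guarantees that $P=[k]\setminus\{i\}$ is nonempty.

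For part (ii), I will count the pendant vertices by the missing index $i\in\{l+1,\dots,k\}$. For each such $i$, the pendant vertices are exactly the $\langle\prod_{j\neq i}p_j^{s_j}\rangle$ with $s_j\in[t_j]$. There are $\prod_{j\neq i}t_j=\prod_{j=1}^{l}t_j$ of them, because $t_j=1$ for $j>l$. Different $i$ give disjoint families, since the supports differ. Summing over the $k-l$ choices of $i$ gives $(k-l)\prod_{i=1}^{l}t_i$. The final ``moreover'' assertion is the special case $i=k$ of this description.

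The only real care needed is in the vertex condition. One must check that $e=1$ (which generates $G$) is correctly excluded from the neighbour count, and that no neighbour falls into $\Phi$. Both are handled by the support bookkeeping above, so I expect no serious obstacle.
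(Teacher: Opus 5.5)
Your proof is correct, and it is a somewhat more general version of the paper's argument. Both rest on the same divisor/support bookkeeping in $\mathbb{Z}_n$.

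The paper argues locally. For the forward direction of (i), it observes that if $\langle p_i\rangle\notin M(P)$ and $t_i\geq 2$, then $\langle p_i\rangle$ and $\langle p_i^2\rangle$ are two distinct neighbours of $P$. For the converse and for (ii), it exhibits the vertices whose support is $[k]\setminus\{i\}$ with $t_i=1$, notes that each is adjacent only to $\langle p_i\rangle$, and counts them.

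You instead derive the exact degree formula $\deg(\langle d\rangle)=\prod_{i\notin P}(t_i+1)-1$ and read everything off from it. This is the corrected form of the formula the paper states just before the proposition, where $s_j$ appears in place of $t_j$.

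What your route buys is exhaustiveness in (ii). The paper's local argument shows that every prime omitted by a pendant vertex has exponent $1$. It never rules out a pendant vertex omitting two indices $i\neq j$ with $t_i=t_j=1$. Such a vertex in fact has degree at least $3$, with neighbours $\langle p_i\rangle$, $\langle p_j\rangle$ and $\langle p_ip_j\rangle$, but the paper does not say so. As written, the paper therefore only shows that the listed family consists of pendant vertices, not that it contains all of them.

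Your formula settles this directly: $\prod_{i\notin P}(t_i+1)=2$ forces $[k]\setminus P$ to be a single index $i$ with $t_i=1$. The count $(k-l)\prod_{i=1}^{l}t_i$ is then exact.

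The paper's approach is marginally shorter if one only wants (i). Yours also yields every vertex degree, which covers the minimum and maximum degree corollary at no extra cost.
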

\begin{proof}
  $(i)$ Let $P$ be a pendant vertex in $\mathcal{G}(\mathbb{Z}_n)$. Since $P\in V(\mathcal{G}(\mathbb{Z}_n))$, we have \(P\nleq \Phi(\mathbb{Z}_n)\). Hence there exists a prime divisor \(p_i\) of \(n\) such that $\langle p_i\rangle \notin M(P)$. If $t_i\geq 2$, then both $\langle p_i\rangle $ and $\langle p_i^2\rangle $ are adjacent to $P$, which is a contradiction. Hence  $t_i=1$.

  Conversely, suppose that $t_k=1$. Then every vertex of the form $(\prod_{i=1}^{k-1}p_i^{s_i})$ with $1\leq s_i\leq t_i,$ is adjacent only to $\langle p_k\rangle $.  Thus  $\langle p_k\rangle $ is pendent.\smallskip

  $(ii)$ Let $t_i\geq 2$ for $i\in [l]$ and $t_i=1$ for $i\in \{l+1, \dots,k\}$. Then any vertex of the form \\$\langle p_1^{s_1}p_2^{s_2}\dots p_l^{s_l}p_{l+1}\dots p_{l+i-1}p_{l+i+1}\dots p_{l+r}\rangle $ is adjacent to $\langle p_i\rangle $ only, where $1\leq i\leq r$ and $l+r=k$. Therefore, the number of pendant vertex of $\mathcal{G}(\mathbb{Z}_n)$ is $(k-l)\prod_{i=1}^lt_i$. 
\end{proof}

We conclude this work by classifying the finite cyclic group  $\mathbb{Z}_n$ such that the graph $\mathcal{G}(\mathbb{Z}_n)$ is Hamiltonian.

\begin{theorem}
    Let $n=\prod_{i=1}^{k}p_i^{t_i},$ where \(p_1,\ldots,p_k\) are distinct primes and \(t_i\geq1\). Then the graph $\mathcal{G}(\mathbb{Z}_n)$ is Hamiltonian if and only if $n=p_1^{r}p_2^{r}$, where $r\geq 2$.
\end{theorem}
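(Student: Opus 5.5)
The plan is to translate $\mathcal{G}(\mathbb{Z}_n)$ into a purely combinatorial graph on supports, settle $k\le 2$ directly, and rule out $k\geq 3$ with a toughness (component-counting) argument.

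\textbf{Reduction to supports.} Every nontrivial proper subgroup of $\mathbb{Z}_n$ is $\langle d\rangle$ with $d\mid n$ and $1<d<n$. For such a subgroup, $M(\langle d\rangle)=\{\langle p_i\rangle : p_i\mid d\}$, and $\Phi(\mathbb{Z}_n)=\langle p_1\cdots p_k\rangle$. Write $\operatorname{supp}(d)=\{i : p_i\mid d\}$. Then the vertices of $\mathcal{G}(\mathbb{Z}_n)$ are exactly the divisors $d$ with $\emptyset\neq\operatorname{supp}(d)\subsetneq[k]$. By Lemma~\ref{lmiff}, two vertices are adjacent if and only if their supports are disjoint. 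The number of vertices with a given support $T$ is $\prod_{i\in T}t_i$. Everything below uses only this description.

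\textbf{Cases $k\le 2$.} If $k=1$, the group has a unique maximal subgroup, so the graph is null and not Hamiltonian. If $k=2$, the vertices split into $A=\{\langle p_1^a\rangle : 1\le a\le t_1\}$ and $B=\{\langle p_2^b\rangle : 1\le b\le t_2\}$. Two vertices are adjacent exactly when one lies in $A$ and the other in $B$, so $\mathcal{G}(\mathbb{Z}_n)\cong K_{t_1,t_2}$, consistent with the proof of Theorem~\ref{finiteabedia}. A complete bipartite graph is Hamiltonian if and only if its parts have equal size at least $2$. This gives exactly $t_1=t_2=r\geq 2$; the case $t_1=t_2=1$ is a single edge. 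For sufficiency, the explicit cycle $\langle p_1\rangle\sim\langle p_2\rangle\sim\langle p_1^2\rangle\sim\langle p_2^2\rangle\sim\cdots\sim\langle p_1^r\rangle\sim\langle p_2^r\rangle\sim\langle p_1\rangle$ works.

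\textbf{Case $k\geq 3$: non-Hamiltonicity.} If some $t_i=1$, then Proposition~\ref{proppen} provides a pendant vertex, which is impossible in a Hamiltonian graph. So I would assume $t_i\geq 2$ for all $i$ and use the necessary condition that $c(\mathcal{G}-S)\le |S|$ for every vertex set $S$ of a Hamiltonian graph $\mathcal{G}$.

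\begin{enumerate}[(1)]
\item Take $S$ to be the set of vertices with singleton support, so $|S|=\sum_{i=1}^k t_i$.
\item A vertex whose support is $[k]\setminus\{i\}$ is adjacent only to vertices with support $\{i\}$, all of which lie in $S$. After deleting $S$, each such vertex is therefore an isolated component.
\item This gives at least $\sum_{i=1}^k\prod_{j\neq i}t_j$ components of $\mathcal{G}-S$.
\item Since $t_j\geq 2$ and $k-1\geq 2$, we have $\prod_{j\neq i}t_j\geq \sum_{j\neq i}t_j$, using $ab\geq a+b$ for $a,b\geq 2$ inductively.
\item Summing over $i$ yields at least $(k-1)\sum_j t_j>\sum_j t_j=|S|$ components, which contradicts Hamiltonicity.
\end{enumerate}

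The main obstacle is choosing the right separating set $S$ in the case $k\geq 3$. Once $S$ is fixed as the singleton-support vertices, the remaining work is the elementary inequality in step (4).
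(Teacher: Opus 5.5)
Your proof is correct and follows essentially the same route as the paper: an explicit cycle for sufficiency, $\mathcal{G}(\mathbb{Z}_{p_1^{t_1}p_2^{t_2}})\cong K_{t_1,t_2}$ for $k=2$, a pendant vertex when some $t_i=1$, and, for $k\geq 3$ with all $t_i\geq 2$, an independent set whose neighbourhood is too small for a Hamiltonian cycle to pass through it. The only difference is the choice of cut set: the paper deletes just the $t_k$ vertices with support $\{k\}$, which isolates the $\prod_{i<k}t_i$ vertices with support $[k-1]$ and needs only $\prod_{i<k}t_i>t_k$. You instead delete all singleton-support vertices, which requires the inequality $\prod\geq\sum$.
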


\begin{proof}
Let $k=2$ with $t_1=t_2=r\geq 2$. Then
		\begin{equation*}
			\langle p_1\rangle \sim \langle {p_2}^{r}\rangle\sim \langle {p_1}^{r}\rangle\sim \langle {p_2}^{r-1}\rangle\sim \langle {p_1}^{r-1}\rangle \sim \cdots \sim \langle{p_2}\rangle\sim \langle{p_1}\rangle 
		\end{equation*}
		is a Hamiltonian cycle in $\mathcal{G}(\mathbb{Z}_n)$. Thus $\mathcal{G}(\mathbb{Z}_{{p_1^r}{p_2^r}})$ is a Hamiltonian graph.
		
		Conversely, suppose that $\mathcal{G}(\mathbb{Z}_n)$ is a Hamiltonian graph. We show that none of the following can occur: \(k\neq 2\), \(t_1=t_2=1\), or \(t_1\neq t_2\), since in each of these cases $\mathcal{G}(\mathbb{Z}_n)$ is not Hamiltonian. 
		
		Let $k=2$. If $t_1=t_2=1$, then $\mathcal{G}(\mathbb{Z}_n)\cong K_2$, which is not a Hamiltonian graph. Let $t_1\not=t_2$. Without loss of generality, assume that $t_1> t_2$. If $t_1-t_2=1$, then $\mathcal{G}(\mathbb{Z}_n)$ is a bipartite graph of odd order and hence is not a Hamiltonian Graph. If $t_1-t_2\geq 2$, then one can observe that the circumference of the complete bipartite graph $\mathcal{G}({\mathbb{Z}_n})$ is $2t_2<t_1+t_2=\vert \mathcal{G}(\mathbb{Z}_n)\vert$ and hence the graph $\mathcal{G}(\mathbb{Z}_n)$  is again not Hamiltonian.
		
		Finally, let $k\geq 3$. If $t_i=1$ for some $i\in [k]$, then by Corollary \ref{cormm}, $\mathcal{G}(\mathbb{Z}_n)$ contains a pendant vertex and hence is not a Hamiltonian graph. Now suppose that $t_i\geq 2$ for all $i\in [k]$, and let $t_k=\min\{t_i:i\in [k]\}$. Consider the sets $ A=\left\{\left\langle\prod_{i=1}^{k-1}p_i^{s_i}\right\rangle:1\leq s_i\leq t_i\right\}$ and $B=\left\{\langle p_k^{s_k}\rangle:1\leq s_k\leq t_k\right\}$. Every vertex in \(A\) is adjacent to every vertex in \(B\), while no two vertices in \(A\) are adjacent and no two vertices in \(B\) are adjacent. Thus, the subgraph of \(\mathcal{G}(\mathbb{Z}_n)\) induced by \(A\cup B\) is complete bipartite.
		Suppose that \(\mathcal{G}(\mathbb{Z}n)\) contains a Hamiltonian cycle. Then this cycle must contain a path whose internal vertices include all the vertices of \(A\) and whose two end vertices lie in \(B\). However,  Since $k\geq 3$ and $t_i\geq 2$ for all $i\in [k]$, we have $\vert A\vert=\prod_{i=1}^{k-1}t_i>t_k=\vert B\vert$. By the same argument as above, no such path can exist in the subgraph induced by $V(A\cup B)$, a contradiction. This concludes the proof.
\end{proof}

\section*{\textbf{Acknowledgement}}
 P.N. Choudhury\ was partially supported by
ANRF Prime Minister Early Career Research Grant ANRF/ECRG/2024/002674/PMS (ANRF, Govt.~of India), INSPIRE Faculty Fellowship Research Grant DST/INSPIRE/04/2021/002620 (DST, Govt.~of India).  B. Khanra was supported by the IIT Gandhinagar GAPD Fellowship SPON/70121 (Indian Institute of Technology Gandhinagar, India).

\end{document}